\newcommand{\eqref}[1]{(\ref{#1})}
\newcommand{\lVert}{\Vert}
\newcommand{\rVert}{\Vert}
\newcommand{\lvert}{\vert}
\newcommand{\rvert}{\vert}
\newtheorem{theorem}{Theorem}
\newtheorem{prop}{Proposition}
\newtheorem{lm}{Lemma}
\newtheorem{cor}{Corollary}
\newcommand{\R}{{\mathbb R}}
\newcommand{\RR}{{\mathcal R}}
\newcommand{\F}{{\mathcal F}}
\newcommand{\PP}{{\mathbb P}}
\newcommand{\QQ}{{\mathbb Q}}
\newcommand{\E}{{\mathbb E}}
\newcommand{\mfA}{{\mathfrak A}}
\newcommand{\mfa}{{\mathfrak a}}
\newcommand{\mfC}{{\mathfrak C}}
\newcommand{\mfc}{{\mathfrak c}}
\newcommand{\mfd}{{\mathfrak d}}
\newcommand{\mfe}{{\mathfrak e}}
\newcommand{\mfD}{{\mathfrak D}}
\newcommand{\mfF}{{\mathfrak F}}
\newcommand{\mfg}{{\mathfrak g}}
\newcommand{\mfG}{{\mathfrak G}}
\newcommand{\mfL}{{\mathfrak L}}
\newcommand{\mfN}{{\mathfrak N}}
\newcommand{\mfn}{{\mathfrak n}}
\newcommand{\mfP}{{\mathfrak P}}
\newcommand{\mfp}{{\mathfrak p}}
\newcommand{\mfQ}{{\mathfrak Q}}
\newcommand{\mfq}{{\mathfrak q}}
\newcommand{\mfR}{{\mathfrak R}}
\newcommand{\mfr}{{\mathfrak r}}
\newcommand{\mfS}{{\mathfrak S}}
\newcommand{\mfs}{{\mathfrak s}}
\newcommand{\mfU}{{\mathfrak U}}
\begin{document}
\begin{frontmatter}

\title{Hybrid Atlas models}
\runtitle{Hybrid Atlas models}

\begin{aug}
\author[A]{\fnms{Tomoyuki} \snm{Ichiba}\corref{}\ead
[label=e1]{ichiba@pstat.ucsb.edu}},
\author[B]{\fnms{Vassilios} \snm{Papathanakos}\ead
[label=e2]{ppthan@enhanced.com}},
\author[B]{\fnms{Adrian} \snm{Banner}\ead[label=e3]{adrian@enhanced.com}},
\author[B]{\fnms{Ioannis} \snm{Karatzas}\ead
[label=e4]{ik@enhanced.com}\thanksref{T1}} and
\author[B]{\fnms{Robert} \snm{Fernholz}\ead[label=e5]{bob@enhanced.com}}
\runauthor{T. Ichiba et al.}
\affiliation{University of California, INTECH, INTECH, INTECH and INTECH}
\dedicated{Dedicated to Professor J. Michael Harrison on the occasion
of his 65th Birthday}
\address[A]{T. Ichiba\\
Department of Statistics\\
and Applied Probability \\
University of California \\
South Hall\\
Santa Barbara 93106 \\
USA\\
\printead{e1}}
\address[B]{V. Papathanakos\\
A. Banner\\
I. Karatzas\\
R. Fernholz\\
INTECH Investment Managment\\
One Palmer Square, Suite 441\\
Princeton, New Jersey 08642\\
USA\\
\printead{e2}\\
\phantom{E-mail: }\printead*{e3}\\
\phantom{E-mail: }\printead*{e4}\\
\phantom{E-mail: }\printead*{e5}}
\end{aug}

\thankstext{T1}{Supported by   NSF Grants DMS-06-01774 and DMS-09-05754.
This author is on partial leave from the Department of Mathematics,
Columbia University.}

\received{\smonth{9} \syear{2009}}
\revised{\smonth{3} \syear{2010}}

%
\begin{abstract}
We study Atlas-type models of equity markets with local characteristics
that depend on both name and rank, and in ways that induce a stable
capital distribution. Ergodic properties and rankings of processes are
examined with reference to the theory of reflected Brownian motions in
polyhedral domains. In the context of such models we discuss
properties of various investment strategies, including the so-called
\textit{growth-optimal} and \textit{universal} portfolios.
\end{abstract}

%
\begin{keyword}[class=AMS]
\kwd[Primary ]{60G44}
\kwd{91B28}
\kwd[; secondary ]{70F10}
\end{keyword}
\begin{keyword}
\kwd{Diffusion processes interacting through their ranks}
\kwd{reflected Brownian motions in polyhedral domains}
\kwd{invariant measure of diffusion}
\kwd{growth-optimal and universal portfolios}
\kwd{local times of Bessel processes}
\end{keyword}

\end{frontmatter}

\section{Introduction} \label{sec: Intro}

In modeling equity market behavior, the goal is to construct models
that are simple enough to be amenable to mathematical analysis, yet
complicated enough to capture the salient characteristics of real
equity markets. A particularly salient characteristic of an equity
market is its \textit{capital distribution curve},
%
%
\begin{equation}
\label{LogLog}
\log k \mapsto\log\mu_{(k)} (t) ,\qquad k=1,\ldots, n ,
\end{equation}
that is, the logarithms of the individual companies' relative
capitalizations (market weights)
$ \mu_{(\cdot)}(t) $ at time $ t $, arranged in descending
order $ \mu_{(1)}(t) \ge\mu_{(2)}(t)\ge\cdots\ge\mu_{(n)}(t) $,
versus the logarithms of their respective ranks from the largest
company $ k=1 $ down to the smallest $ k = n $.

The capital distribution curve for the US equity market has shown
remarkable stability over the last century (see, for instance, Figure
5.1 of Fernholz \cite{F02}), and this stability has been captured in
the Atlas and first-order models introduced in \cite{F02} and studied
by Banner, Fernholz and Karatzas \cite{BFK05} and others. These models
assign growth rates and volatilities to the different stocks based
purely on the stocks' rank in terms of relative capitalization at any
given time, and roughly speaking,
if the smallest stocks are assigned big growth rates and big variances,
then a stable capital distribution does indeed emerge.

While Atlas and first-order models are able to reproduce the shape and
stability of the capital distribution curve, they still fail to provide
an accurate representation of market behavior. It was shown in
\cite{BFK05} that in these models each stock spends
about the same proportion of time in each rank over the long term.
While this kind of ergodicity may be a nice mathematical property, it
does not seem to hold for real markets: in real markets the largest
stocks seem to retain their status for long periods of time, while most
stocks never reach the upper echelons of capitalization. Hence, a more
elaborate model is needed.

In this paper we generalize the first-order models by introducing
name-based effects of companies, in addition to the purely rank-based
effects of the simpler models studied in \cite{BFK05}. The resulting
\textit{hybrid model} (\ref{eq: model}) has more flexibility to describe
faithfully the complexity of the entire market; in particular, the
model has both stability properties and occupation time properties that
are realistic.

\subsection*{Relation to extant literature}
From a different point of view,
the Atlas model can be seen as a physical particle system with each
company represented by a particle diffusing on the positive real line.
These individual diffusive motions have drift and volatility
coefficients that depend on the entire configuration of particles at
any given moment, but not on the individual particles' ``identities''
(tags). Recently, Pal and Pitman \cite{PP08} and Chaterjee and Pal
\cite{CP07,CP09} studied such systems, specifically when the
drift coefficient is a function of the particle's rank and all
volatility coefficients are equal to a given constant. Under
appropriate conditions on the drift coefficients, the system has a
unique invariant probability measure in a lower-dimensional space; to
wit, the system of the $ n $ particles is itself not ergodic, but
the projected system in a lower-dimensional hyperplane turns out to be
ergodic, and with invariant probability
measure that has an explicit exponential-product-form probability
density function. Moreover, when the number of particles increases to
infinity, the system converges weakly to one described by a
Poisson--Dirichlet distribution on the real line. These analyses are
useful in studying the Atlas model for an equity market, when the
volatility coefficients are all equal.

The model is still tractable when its volatility coefficients depend on
the rankings. Questions of existence and uniqueness for such systems in
this generality are settled through the theory of martingale problems
studied by Stroock and Varadhan \cite{SV06} and Bass and Pardoux
\cite{BP87}. An important new feature of such models is that three or more
particles may now collide with each other at the same time with
positive probability, or even with probability one, under a suitably
``uneven'' volatility structure. This is a very significant departure
from the constant-volatility case. Some sufficient conditions on the
volatility coefficients for the occurrence and for the avoidance of
triple (or higher-order) collisions, are derived in \cite{IK09}, by
comparison with Bessel processes and with help from properties of
reflected Brownian motion.

The ranked particle system has a deep relation with the theory of
multi-dimensional reflected Brownian motion studied intensively in the
context of stochastic network systems by Harrison, Reiman and Williams
\cite{HR81,HW87Stoch,HW87AP} and their collaborators.
Recently, Dieker and Moriarty \cite{DM09} provided necessary and
sufficient conditions for the invariant density of semimartingale
reflected Brownian motions in a two-dimensional wedge to be expressed
as a finite sum of terms of product-of-exponential form, by extending
the geometric considerations on the so-called ``skew-symmetry''
condition. In the present paper we use this skew-symmetry condition
[see (\ref{eq: SSC}) in Lemma \ref{lm: vol cond 2}] for the
$n$-dimensional reflected Brownian motions to solve the basic adjoint
relation introduced in the context of a piece-wise constant drift
coefficient structure, and thus compute an invariant density for the
ranked process of the hybrid Atlas model as a
sum of products of exponentials. With this explicit formula we compute
the invariant distribution of the capital distribution curve as well as
the long-term average occupation times.

Another interesting system of ranked particles is Dyson's process of
noncolliding Brownian motions, which are the ordered eigenvalues of a
Brownian motion on the space of Hermitian matrices. Recent work by
Warren \cite{Warren07} constructs Dyson's process using Doob's
$h$-transform and Brownian motion in the Gelfand--Tsetlin cone, as an
extension of Dub\'edat's work \cite{Dubedat04} on the relation between
reflected Brownian motions on the wedge and a Bessel process of
dimension three. The (infinite) ranked particle systems also appear in
mean-field spin glass theory of mathematical physics. In another recent
development, Arguin and Aizenman \cite{AA08} analyze robust
quasi-stationary competing particle systems with overlapping
hierarchical structures where the Poisson--Dirichlet distribution
emerges as in \cite{PP08}. Instead of taking Dyson's process or the
spin glass theory as our model for rankings in equity markets, we
obtain the ranked particle system through a
general formula of Banner and Ghomrasni \cite{BG07} for continuous
semimartingales in the hybrid Atlas model.

\subsection*{Preview}
This paper follows the following structure. We describe our model in
Section \ref{sec: Model}, its lower-dimensional ergodic properties in
Section \ref{sec: Ergodicity}, the dynamics of its rankings in Section
\ref{sec: rankings}, its invariant measure and occupation times in
Section \ref{sec: Inv meas} and some portfolio analysis in its context
in Section \ref{sec: Portfolio Analysis}. In the \hyperref[sec: Appendix]{Appendix}
we prove some auxiliary results stated in the main sections.

\subsection*{Notation}
The following notions and notation are useful to describe rankings as
in \cite{BFK05}. We consider a collection $ \{ Q_{k}^{(i)}\}_{1\le i,
k\le n} $ of polyhedral domains in $ \R^{n} $, where $ y =
(y_{1}, \ldots, y_{n}) \in Q_{k}^{(i)} $ means that the coordinate
$ y_{i} $ is ranked $k$th among $ y_{1}, \ldots, y_{n} $, with
ties resolved in favor of the lowest index (or ``name''). Note that for
every index $i = 1, \ldots, n $ and rank $ k = 1, \ldots, n $,
we have the partition properties
$\bigcup_{\ell=1}^{n} Q_{\ell}^{(i)} = \R^{n} = \bigcup_{j=1}^{n}
Q_{k}^{(j)} $.

We shall denote by $ \Sigma_{n} $ the symmetric group of
permutations of $ \{1, \ldots, n\} $. For each permutation $
{\mathbf p} \in\Sigma_{n} $ we consider $ \RR_{{\mathbf p}} : = \bigcap
_{k=1}^{n} Q_{k}^{({\mathbf p}(k))} $, the polyhedral chamber consisting
of all points $ y \in\R^{n} $ such that $ y_{\mathbf{ p}(k)} $
is ranked $k$th among $ y_{1}, \ldots, y_{n} $, for every $ k =
1, \ldots, n $. The collection of polyhedral chambers $ \{ \RR
_{\mathbf p}\}_{{\mathbf p} \in\Sigma_{n}} $ is a partition of all of $
\R
^{n} $.

Since for each $ y \in\R^{n} $ there exists a unique $ {\mathbf p}
\in\Sigma_{n} $ such that $ y \in\RR_{{\mathbf p}} $ (because of
the way ties are resolved), we shall find it useful to define an
\textit{indicator map} $ \R^{n} \ni(x_1,\ldots, x_n )' =x \mapsto\mfp^{x}
\in\Sigma_{n} $ such that $ x_{\mfp^{x}(1)} \ge\cdots\ge x_{\mfp
^{x}(n)} $. In other words, $ \mfp^{x} (k) $ is the index of the
coordinate in the vector $x$ that occupies the $k$th rank among $
x_1,\ldots, x_n $.

When matrices and vectors are used, the vector norm $ \lVert x
\rVert:= ( \sum_{i=1}^{n} x_{i}^{2} )^{1/2} $ and the inner
product $ \langle x , y \rangle:= \sum_{i=1} x_{i} y_{i} =
x^{\prime} y $ for $ x , y \in\R^{n} $, where $ \prime$
stands for transposition, are defined in the usual manner. The gradient
$ \nabla$ and the Laplacian $ \Delta$ operators on the space
$ C^{2} $ of twice continuously differentiable functions are used
in Section \ref{sec: Inv meas}, as well as the notation $
C_{c}^{2}(\cdot) $ [resp.,
$ C_{b}^{2}(\cdot) $] for the spaces of twice continuously
differentiable functions which have compact support (resp., are bounded
functions).

\section{Model} \label{sec: Model}
We shall study an equity market that consists of $ n $ assets
(stocks) with capitalizations $ \mathfrak{ X} (t) = ( X_{1}(t) ,
\ldots, X_{n}(t))' $ which are positive for all times $ 0 \le t <
\infty$. The random variable $ X_{i}(t) $ represents the
capitalization at time $ t $ of the asset with index (name) $ i $.

We shall assume that the log-capitalizations $ Y_{i}( t) := \log
X_{i}( t) $, $ i = 1, \ldots, n $, satisfy the system of
stochastic differential equations
%
%
\begin{eqnarray} \label{eq: model}\qquad
d Y_{i}(t) &=& \Biggl( \sum_{k=1}^{n} g_{k} {\mathbf1}_{Q_{k}^{(i)}}
(Y(t)) + \gamma_{i} + \gamma\Biggr)\, d t + \sum_{j=1}^{n} \rho
_{i, j} \,d W_{j}(t) \nonumber\\[-8pt]\\[-8pt]
&&{} + \sum_{k=1}^{n} \sigma_{k} {\mathbf1}_{Q_{k}^{(i)}}
(Y(t) ) \, d W_{i}(t) ,\qquad Y_{i}(0) = y_{i},\qquad0 \le
t <\infty\nonumber
\end{eqnarray}
with given initial condition $ y =(y_{1} , \ldots, y_{n})^{\prime
} $. The process $ W(\cdot):= (W_{1}(\cdot) ,\ldots,\break
W_{n}(\cdot))^{\prime} $ is an $n$-dimensional Brownian motion. As
long as the $ n$-dimensional process $ Y(\cdot):= ( Y_{1}(\cdot
) , \ldots, Y_{n}(\cdot) )^{\prime} $ of log-capitalizations is
in the polyhedron $ Q_{k}^{(i)} $, the $i$th-coordinate $
Y_{i}(\cdot) $ is ranked $k$th among $ Y_{1}(\cdot) , \ldots,
Y_{n}(\cdot) $ and behaves like a Brownian motion with drift $ g_{k}
+ \gamma_{i} + \gamma$ and variance $ (\sigma_{k}+ \rho_{i,
i})^{2} + \sum_{j\neq i}\rho_{i, j}^{2} $.\vspace*{1pt} The constants $ \gamma
, \gamma_{i} $ and $ g_{k} $ represent respectively a common, a
name-based and a rank-based drift (growth rate) whereas the constants
$ \sigma_{k} $ and $ \rho_{i, j} $ represent rank-based
volatilities and name-based correlations, respectively.
\begin{Assumption*}
Throughout this paper we assume (without
loss of
generality) that the drift constants satisfy the stability condition
%
%
\begin{equation} \label{eq: drift cond 1}
\sum_{k=1}^{n} g_{k} + \sum_{i=1}^{n} \gamma_{i} = 0 .
\end{equation}
We shall assume that the $ (n\times n) $ matrices
%
%
\begin{equation} \label{eq: vol cond 1}\quad
\mfs_{\mathbf p} := \operatorname{diag} \bigl(\sigma_{{\mathbf
p}^{-1}(1)} ,
\ldots, \sigma_{{\mathbf p}^{-1}(n)} \bigr) + ( \rho_{i, j} )_{1\le
i, j \le n}
\mbox{ are positive definite}
\end{equation}
for every $ {\mathbf p} \in\Sigma_{n} $, with $ \sigma_{k} > 0 $
for every $ k = 1, \ldots, n $.
\end{Assumption*}

Equation (\ref{eq: model}) can be cast in vector form as
%
%
\begin{equation}
\label{eq: model matrix}
d Y(t) = G (Y(t)) \,d t + S (Y(t)) \,d W(t) ,\qquad Y(0) = y
\in\R^{n}
\end{equation}
for $ 0 \le t < \infty$, where the functions $ G \dvtx\R
^{n}\rightarrow\R^{n} $ and $ S \dvtx\R^{n} \mapsto\R^{n\times n}
$ are
\begin{eqnarray*}
G(y) &:=& \sum_{{\mathbf p} \in\Sigma_{n}} {\mathbf1}_{\RR_{{\mathbf
p}}}(y) \cdot
\bigl( g_{{\mathbf p}^{-1}(1)} + \gamma_{1} + \gamma, \ldots,
g_{{\mathbf p}^{-1}(n)} + \gamma_{n} + \gamma\bigr)^{\prime} , \\
S(y) &:=& \sum_{{\mathbf p} \in\Sigma_{n}} {\mathbf1}_{\RR_{{\mathbf
p}}}(y) \cdot
\mfs_{{\mathbf p}},\qquad y \in\R^{n}.
\end{eqnarray*}
Thus (\ref{eq: model}) is a system of stochastic differential equations
with coefficients that are piecewise constant, the same in each
polyhedral chamber $ \RR_{{\mathbf p}} $, $ {\mathbf p} \in\Sigma_{n}
$. Under the assumption of positive definiteness in (\ref{eq: vol cond
1}), the system (\ref{eq: model}) admits a weak solution $ (Y, W) $
on a filtered probability space $ (\Omega, \F, \{\F_{t}\} , \PP
) $ satisfying the usual conditions. By the martingale-problem theory
of Stroock and Varadhan \cite{SV06} and the results in Bass and Pardoux
\cite{BP87}, this weak solution is unique in the sense of the
probability distribution.

\section{Ergodicity} \label{sec: Ergodicity}
Thanks to assumption (\ref{eq: drift cond 1}) on the drifts, and taking
the average of both sides of (\ref{eq: model}), we obtain the average
log-capitalization process $ \overline{Y} (\cdot) := \sum_{i=1}^{n}
Y_{i}(\cdot) / n $ in the form
%
%
\begin{eqnarray}
\label{eq: 2.4a}
&&\overline{Y} (t) = \frac{1}{n}\sum_{i=1}^{n} {y}_{i} + \gamma t
+ \frac{1}{n}\sum_{k=1}^{n} \sigma_{k} B_{k}(t) + \frac{1}{n} \sum_{i,
j=1}^{n} \rho_{i, j} W_{j}(t) , \nonumber\\[-8pt]\\[-8pt]
&&\qquad\mbox{where } B_{k}(t) := \sum_{i=1}^{n} \int^{t}_{0}
\mathbf{1}_{Q_{k}^{(i)}}(Y(s)) \,d W_{i}(s) \mbox{, }k = 1, \ldots,
n, \nonumber
\end{eqnarray}
for $ 0 \le t < \infty$ because of $\bigcup_{i=1}^{n} Q_{k}^{(i)}
= \R^{n} $. Here $ B_{1}(\cdot) , \ldots, B_{n}(\cdot) $ are
continuous local martingales with quadratic (cross-)variations $
\langle B_{k} , B_{\ell} \rangle$ $ (t) = t \delta_{k, \ell} $,
and hence are independent standard Brownian motions by the Knight
theorem. It follows that the average $ \overline{Y }(\cdot) $ of
the log-capitalizations $ Y_{1}(\cdot) , \ldots, Y_{n}(\cdot) $
grows at a rate equal to the common drift $ \gamma$, that is,
%
%
\begin{equation}
\label{eq: average Y LLN}
\lim_{T\to\infty} \frac{ \overline{Y } (T) }{T} = \gamma
\qquad\mbox{holds a.s.,}
\end{equation}
by the strong law of large numbers for Brownian motion.

In order to study the long-term behavior of the whole
log-capitalizations, let us quote Theorems 4.1 and 5.1 on pages 119--121
of Khas'minskii \cite{K80}, since our argument relies on them rather decisively.
\begin{prop}[(Khas'minskii)] \label{prop: khas} Consider a diffusion $
\xi(\cdot) $ with values in a subset $ E $ of
Euclidean space. Assume that there exists a bounded domain $ U
\subset E $ with regular boundary, having the following properties:
\begin{enumerate}[(B.2)]
\item[(B.1)] In the domain $ U $ the smallest eigenvalue of the
diffusion matrix of the process $ \xi(\cdot) $ is bounded away from zero.
\item[(B.2)] If $ x \in E \setminus
U $, the mean time $ \tau$ at which a path issuing from $ x $
reaches the set $ U $ is finite, and $ \sup_{x \in K} \E_x (\tau)
< \infty$ for every compact subset $ K \subset E $.
\end{enumerate}
Then the Markov process $ \xi(\cdot) $ has a unique stationary
distribution $ \mu$, and which satisfies the Strong Law of
Large Numbers
\[
\PP_x \biggl( \lim_{T\to\infty} \frac{1}{T} \int^T_0 f(\xi(t)) \,d t =
\int_E f(y) \mu(dy)\biggr) = 1,\qquad x \in E ,
\]
for any bounded, measurable function $ f\dvtx E \to\R$.
\end{prop}

Let us introduce the column vector $ {\mathfrak1} := (1, \ldots,
1)^{\prime} $ and the subspace
\[
\Pi:= \{ y \in\R^{n} \vert{\mathfrak1}^{\prime} y = 0 \}.
\]
\begin{theorem} \label{prop: 1} In addition to (\ref{eq: drift cond
1}) and (\ref{eq: vol cond 1}), let us impose for every $ {\mathbf p}
\in
\Sigma_{n} $ the following stability condition:
%
%
\begin{equation}
\label{eq: drift cond 2}
\sum_{k=1}^{\ell} \bigl( g_{k} + \gamma_{{\mathbf p} (k)} \bigr) < 0
,\qquad\ell= 1, \ldots, n-1 .
\end{equation}
Then the deviations $ \widetilde Y(\cdot) := ( Y_{1}(\cdot) -
\overline{Y}(\cdot), \ldots, Y_{n}(\cdot) - \overline{Y}(\cdot)
) $ of the log-capitaliza\-tions $ Y_{1}(\cdot) , \ldots,
Y_{n}(\cdot) $ from their average are \textup{stable in distribution}:
there exists a unique invariant probability measure $ \mu$ for the
$ \Pi$-valued Markov process $ \widetilde Y(\cdot) $, and for any
bounded, measurable function $ f\dvtx\Pi\rightarrow\mathbb{R} $ we
have the Strong Law of Large Numbers
%
%
\begin{equation}
\label{SLLN}
\lim_{T \rightarrow\infty} \frac{ 1 }{ T } \int_0^T f (
\widetilde{Y} (t) ) \,dt = \int_\Pi f (y) \mu(dy) ,\qquad
\mbox{a.s.}
\end{equation}
\end{theorem}
\begin{pf}
From (\ref{eq: model}) and (\ref{eq: 2.4a}), we have
%
%
\begin{equation} \label{eq: y tilde}
d \widetilde Y (t) = \widetilde G( \widetilde Y(t)) \,d t +
\widetilde S (\widetilde Y(t)) \,d W(t), \qquad
\widetilde Y(0) = \widetilde y ,
\end{equation}
where $ \widetilde y:= y - {\mathfrak1}^{\prime} y \cdot{\mathfrak
1} / n $, $ \widetilde G (y) := G(y) - \gamma\cdot
{\mathfrak1} $ and $ \widetilde S(y) := S(y) - {\mathfrak1}
{\mathfrak1}^{\prime} S(y) / n $ for $ y \in\R^{n} $. By
(\ref{eq: vol cond 1}) the covariance matrix in (\ref{eq: y tilde}) is
uniformly nondegenerate: for all $ x, y \in\Pi$ we have
\[
x^{\prime} \widetilde S(y) x = x^{\prime} S(y) x - x^{\prime}
{\mathfrak1} {\mathfrak1}^{\prime} S(y) x / n = x^{\prime}
S(y) x = \sum_{{\mathbf p} \in\Sigma_{n} } {\mathbf1}_{\RR_{\mathbf p}}(y)
\cdot x^{\prime} \mfs_{\mathbf p} x
\]
and
%
%
\begin{equation} \label{eq: bounds S tilde}
\lambda_{0} \lVert x \rVert^{2} \le x^{\prime} \widetilde S(y) x \le
\lambda_{1} \lVert x
\rVert^{2} ,
\end{equation}
where $ \lambda_{0} (\lambda_{1}) $ are the minimum (maximum) of
the smallest (largest) eigenvalues of the positive definite matrices
$ \mfs_{\mathbf p} $ over $ {\mathbf p} \in\Sigma_{n} $ in (\ref{eq:
vol cond 1}).

Summation-by-parts, along with (\ref{eq: drift cond 1}) and (\ref{eq:
drift cond 2}), lead now to
%
%
\begin{eqnarray} \label{eq: bound y G}\qquad
y^{\prime} \widetilde G(y) &=& \sum_{i=1}^{n} y_{i} \bigl( g_{({\mfp
^y})^{-1}(i)} + \gamma_{i}\bigr) = \sum_{k=1}^{n} y_{{\mfp^y}(k)}
\bigl( g_{k} + \gamma_{{\mfp^y}(k)} \bigr)\nonumber\\
&=& y_{{\mfp^y}(n)} \underbrace{\sum_{k=1}^{n} \bigl( g_{k} + \gamma
_{{\mfp^y}(k)} \bigr)}_{= 0} + \sum_{k=1}^{n-1} \bigl(y_{{\mfp^y}(k)}
- y_{{\mfp^y}(k+1)} \bigr) \sum_{\ell=1}^{k} \bigl(g_{\ell} + \gamma
_{{\mfp^y}(\ell)} \bigr) \\
&\le& c \sqrt{n} \sum_{k=1}^{n} \bigl( y_{{\mfp^y}(k)} - y_{{\mfp
^y}(k+1)} \bigr) \le c \lVert y \rVert< 0,\qquad
y \in\Pi\cap\RR_{\mathbf p} ,\nonumber
\end{eqnarray}
where $ c := n^{-1/2} \max_{1\le\ell\le n-1 , {\mathbf p} \in\Sigma
_{n}} \sum_{k=1}^{\ell} (g_{k}+ \gamma_{{\mathbf p}(k)}) < 0 $. In the
last inequality
we have used for $ \mathbf{ p} \in\Sigma_n $ and $ y \in\Pi
\cap\RR_{\mathbf p} $ the properties $ y_{\mathbf{ p}(1)} \ge
y_{\mathbf{ p}(2)} \ge\cdots\ge y_{\mathbf{ p}(n)} $, thus also
$ y_{\mathbf{ p}(1)} \ge0 \ge y_{\mathbf{ p}(n)} $ and
\[
\lVert y \rVert^{2} \le n \max\bigl( y_{{\mathbf p}(1)}^{2} , y_{{\mathbf
p}(n)}^{2} \bigr) \le n \bigl(y_{{\mathbf p}(1)} - y_{{\mathbf p}(n)}
\bigr)^{2} .
\]

Now we consider the one-dimensional process $ N(t):= f(\widetilde
Y(t)) $ with $ f(y) = ( \lVert y \rVert^{2} + 1)^{1/2} > \lVert y
\rVert$ for $ y \in\Pi$. An application of It\^o's rule gives
\begin{eqnarray*}
d N(t) &=& \widetilde f(\widetilde Y(t)) \,d t + [ f(y)
]^{-1} y^{\prime} \widetilde S(y) \vert_{y = \widetilde Y(t)} \,d
W(t) ,\qquad 0 \le t < \infty, \\
\widetilde f(y) :\!&=& (f(y))^{-1} \bigl( y^{\prime} \widetilde
G(y) + \tfrac{1}{2} \operatorname{trace} ( \widetilde S(y) \widetilde
S(y)^{\prime} ) \bigr) - \tfrac{1}{2}( f(y))^{-3} y^{\prime}
\widetilde S \widetilde S(y)^{\prime} y
\end{eqnarray*}
for $ y \in\Pi$. It follows from (\ref{eq: bounds S tilde}), (\ref
{eq: bound y G}) and the boundedness of $ \widetilde S (\cdot) $
that there exists a constant $ \kappa> 0 $ such that $
\widetilde f(y) \le c / 2 < 0 $ for $ \lVert y \rVert>
\kappa$. The diffusion coefficient $ [f(y)]^{-1} y^{\prime}
\widetilde S(y) $ of $ N(\cdot) $ is a vector whose entries are
uniformly bounded by some constants from
(\ref{eq: bounds S tilde}).

Thus $ N(\cdot) $ is positive recurrent with respect to the
interval $ (0, \kappa) $, and hence so is $ \widetilde Y(\cdot)
$ with respect to $ B \cap\Pi$ for some ball $ B \subset\R
^{n} $ centered at the origin.

Finally, we check the conditions (B.1) and (B.2) of Proposition \ref
{prop: khas}. For our diffusion $ \xi(\cdot) = \widetilde{Y}(\cdot)
$ on $ E = \Pi$ we have verified (B.1) in (\ref{eq: bounds S
tilde}). Assumption (B.2) is verified from the positive recurrence of
$ \widetilde{Y}(\cdot) $ with respect to $ U = B \cap\Pi$.
Therefore, by Proposition \ref{prop: khas}, we obtain the existence of
a unique invariant probability measure $ \mu$ that satisfies (\ref{SLLN}).
\end{pf}

Condition (\ref{eq: drift cond 2}) ensures that, if $ y_1 < y_2 <
\cdots< y_n $ and one subdivides at time $ t=0 $ the ``cloud'' of
$n$ particles diffusing on the real line according to the dynamics of
(\ref{eq: model}), into two ``subclouds''---one consisting of the $
\ell$ leftmost, and the other of the $ n-\ell$ rightmost,
particles---the two subclouds will eventually merge. They will not
continue to evolve like separate galaxies, that never make contact with
each other (cf. the Remark following Theorem 4 in Pal and Pitman
\cite{PP08} for an elaboration of this point in the case of the purely
rank-based first-order model with equal variances).
\begin{cor} \label{cor: aot 1} Under the assumptions of Theorem \ref
{prop: 1}, the \textup{long-term average occupation time} that company $
i $ spends in the $k$th rank, that is,
%
%
\begin{equation} \label{eq: occup time 1}
\theta_{k, i} := \lim_{T\to\infty} \frac{1}{T} \int^{T}_{0} \mathbf{
1}_{Q_{k}^{(i)}}(\mathfrak{ X}(t)) \,d t,\qquad i, k = 1, \ldots
, n ,
\end{equation}
exists almost surely in $ [0, 1] $.

The resulting array of numbers $ \theta_{k, i} \in[0, 1] $ satisfy
$ \sum_{j=1}^{n} \theta_{k, j} = \sum_{\ell=1}^{n} \theta_{\ell,i} =
1 $ for each ``name'' $ i = 1, \ldots, n $ and ``rank'' $ k = 1,
\ldots, n $; that is, $ \vartheta:= (\theta_{k, i})_{1\le k, i \le
n} $ is a doubly stochastic matrix. Similarly, the average occupation
time $ \theta_{\mathbf{ p}} $ of the market in the polyhedral
chamber $ \RR_{\mathbf p} $, namely,
%
%
\begin{equation} \label{eq: theta p def}
\theta_{\mathbf p} := \lim_{T\to\infty}\frac{1}{T} \int^{T}_{0}
{\mathbf1}_{\RR_{\mathbf p}} (\mathfrak{ X}(t)) \,d t\qquad
\mbox{exists a.s. in $ [0, 1] $}
\end{equation}
for every $ {\mathbf p} \in\Sigma_{n} $, and we have $ \theta_{k, i}
= \sum\theta_{\mathbf p} $, where the summation is over the set
$ \{ {\mathbf p} \in\Sigma_{n} \vert{\mathbf p}(k) = i\} $ of
permutations for $ 1 \le i, k \le n $.
\end{cor}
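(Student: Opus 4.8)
The plan is to obtain every assertion of the corollary as a direct application of the Strong Law of Large Numbers $(\ref{SLLN})$ of Proposition $\ref{prop: 1}$ to the indicator functions of the polyhedral sets $\, Q_{k}^{(i)}\, $ and $\, \RR_{\bf p}\, $. The one step that is not purely formal is the reduction of the capitalization-valued integrands in $(\ref{eq: occup time 1})$ and $(\ref{eq: theta p def})$ to functions of the $\, \Pi$-valued process $\, \widetilde Y(\cdot)\, $. I would argue that, since $\, x \mapsto \log x\, $ is strictly increasing, the ranks of the coordinates of $\, \mathfrak X(t) = (X_{1}(t), \ldots , X_{n}(t))'\, $ agree with those of $\, Y(t) = (\log X_{1}(t), \ldots , \log X_{n}(t))'\, $ (ties being broken the same way in both), and that membership of a point in $\, Q_{k}^{(i)}\, $ or $\, \RR_{\bf p}\, $ is determined only by the relative order of its coordinates, hence is unaffected by the shift $\, y \mapsto y - c\, {\mathfrak 1}\, $ for any $\, c \in \R\, $. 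Since $\, \widetilde Y(t) = Y(t) - \overline{Y}(t)\, {\mathfrak 1} \in \Pi\, $, this yields
$$ {\bf 1}_{Q_{k}^{(i)}}\big( \mathfrak X(t) \big) \,=\, {\bf 1}_{Q_{k}^{(i)}}\big( \widetilde Y(t) \big)\,, \qquad {\bf 1}_{\RR_{\bf p}}\big( \mathfrak X(t) \big) \,=\, {\bf 1}_{\RR_{\bf p}}\big( \widetilde Y(t) \big)\,, \qquad 0 \le t < \infty\,, $$
and the restrictions to $\, \Pi\, $ of $\, {\bf 1}_{Q_{k}^{(i)}}\, $ and $\, {\bf 1}_{\RR_{\bf p}}\, $ are bounded Borel functions, so $(\ref{SLLN})$ is applicable to each of them.

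Next I would invoke $(\ref{SLLN})$ for each of the $\, n^{2} + n!\, $ indicator functions above and intersect the corresponding events of full probability — a finite intersection — to obtain a single almost sure event on which all the limits in $(\ref{eq: occup time 1})$ and $(\ref{eq: theta p def})$ exist and equal
$$ \theta_{k, i} \,=\, \mu\big( Q_{k}^{(i)} \cap \Pi \big)\,, \qquad \theta_{\bf p} \,=\, \mu\big( \RR_{\bf p} \cap \Pi \big)\,. $$
Each of these numbers lies in $\, [0, 1]\, $ because $\, \mu\, $ is a probability measure — or, more simply, because the Cesàro averages $\, T^{-1} \int_{0}^{T} {\bf 1}_{Q_{k}^{(i)}}(\mathfrak X(t))\, dt\, $ lie in $\, [0,1]\, $ for every $\, T > 0\, $. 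This settles the existence claims.

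The remaining identities are combinatorial, using the partition properties recalled in the Notation section. For a fixed rank $\, k\, $ the sets $\, \{Q_{k}^{(j)}\}_{j=1}^{n}\, $ partition $\, \R^{n}\, $, so $\, \sum_{j=1}^{n} {\bf 1}_{Q_{k}^{(j)}}(y) = 1\, $ for every $\, y\, $; similarly $\, \sum_{\ell=1}^{n} {\bf 1}_{Q_{\ell}^{(i)}}(y) = 1\, $ for a fixed name $\, i\, $. Passing these finite sums through the limits in $(\ref{eq: occup time 1})$ — equivalently, integrating against $\, \mu\, $ — gives $\, \sum_{j=1}^{n} \theta_{k, j} = \sum_{\ell=1}^{n} \theta_{\ell, i} = 1\, $, which is the double stochasticity of $\, \vartheta\, $. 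Finally, a point $\, y\, $ lies in $\, Q_{k}^{(i)}\, $ if and only if the unique permutation $\, {\bf p} = \mfp^{y}\, $ with $\, y \in \RR_{\bf p}\, $ satisfies $\, {\bf p}(k) = i\, $; hence $\, {\bf 1}_{Q_{k}^{(i)}} = \sum_{\{{\bf p}\,:\,{\bf p}(k)=i\}} {\bf 1}_{\RR_{\bf p}}\, $, and taking time averages (or integrating against $\, \mu\, $) yields $\, \theta_{k, i} = \sum_{\{{\bf p}:{\bf p}(k)=i\}} \theta_{\bf p}\, $.

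I do not anticipate a genuine obstacle: the corollary is a routine consequence of Proposition $\ref{prop: 1}$. The only points deserving care are the identification of the $\, \mathfrak X(\cdot)$-indicators with functions of the $\, \Pi$-valued diffusion $\, \widetilde Y(\cdot)\, $ — handled by monotonicity of $\, \log\, $ and shift-invariance of the ranking — and the application of $(\ref{SLLN})$ to finitely many integrands on a common almost sure event, which is harmless. If desired, one could also remark that $\, \mu\, $ charges no hyperplane piece separating two chambers, since $\, \widetilde Y(\cdot)\, $ is uniformly nondegenerate and hence $\, \mu\, $ is absolutely continuous on $\, \Pi\, $; but this refinement plays no role in the statement as formulated.
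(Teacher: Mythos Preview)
Your proposal is correct and follows essentially the same route as the paper: identify $\,{\bf 1}_{Q_{k}^{(i)}}(\mathfrak X(t)) = {\bf 1}_{Q_{k}^{(i)}}(\widetilde Y(t))\,$, apply the SLLN $(\ref{SLLN})$ to obtain $\,\theta_{k,i} = \mu(Q_{k}^{(i)})\,$, and then read off the double stochasticity and the decomposition $\,\theta_{k,i} = \sum_{\{{\bf p}:{\bf p}(k)=i\}} \theta_{\bf p}\,$ from the partition properties of the $\,Q_{k}^{(i)}\,$ and $\,\RR_{\bf p}\,$. Your write-up is in fact more explicit than the paper's terse argument (you spell out the monotonicity of $\log$ and the shift-invariance of the ranking), but the underlying idea is identical.
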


Indeed, by Theorem \ref{prop: 1} and in particular (\ref{SLLN}), the
quantity of (\ref{eq: occup time 1}) satisfies
\[
\theta_{k, i} = \lim_{T\to\infty} \frac{1}{T} \int^{T}_{0}
\mathbf{1}_{Q_{k}^{(i)}}(\mathfrak{ X}(t)) \,d t =
\lim_{T\to\infty} \frac{1}{T} \int^{T}_{0} {\mathbf1}_{Q_{k}^{(i)} \cap
\Pi}(\widetilde Y(t)) \,d t = \mu\bigl(Q_{k}^{(i)}\bigr) ,
\]
where $ \mu$ is the unique invariant probability measure for the
process $ \widetilde Y(\cdot) $ of (\ref{eq: y tilde}).
Since $ \bigcup_{\ell=1}^{n} Q_{\ell}^{(i)} = \R^{n} = \bigcup
_{j=1}^{n} Q_{k}^{(j)} $, it is obvious that $ \sum_{\ell
=1}^{n}\theta_{\ell, i} = \sum_{j=1}^{n} \theta_{k, j} = 1 $ for $
1 \le i, k \le n $. Equation (\ref{eq: theta p def}), and the claim
following it, are obtained similarly.

\section{Rankings} \label{sec: rankings}
Let us now look at the log-capitalizations of the various companies
listed according to rank, namely
%
%
\begin{equation}
\label{eq: rank proc 1}
Z_{k}(t) := \sum_{i=1}^{n} {\mathbf1}_{Q_{k}^{(i)}}(Y(t)) \cdot
Y_{i}(t) ,\qquad k = 1, \ldots, n , 0 \le t < \infty.
\end{equation}
These are the order statistics $ Z_{1} (\cdot) \ge\cdots\ge
Z_{n}(\cdot) $ for the log-capitalizations
$ Y_{1}(\cdot)= \log X_1(\cdot),\ldots,
Y_{n}(\cdot) = \log X_n(\cdot) $, listed from largest down to
smallest. We recall the indicator map $ \mfp^{x} $ introduced at
the end of Section \ref{sec: Intro}, and define the $ \Sigma
_{n}$-valued \textit{index process }
\[
\mfP_{t} := \mfp^{\mathfrak{ X}(t)} = \mfp^{Y(t)} ,\qquad 0
\le t < \infty,
\]
so that $ X_{\mfP_{t}(1)}(t) \ge\cdots\ge X_{\mfP_{t}(n)}(t) $.
We may thus write $ Z_{k}(\cdot) = Y_{\mfP_{\cdot}(k)}(\cdot) $
from (\ref{eq: rank proc 1}); loosely speaking, $ \mfP_{t}(k) $ is
the index (name) of the company that occupies the $k$th rank, in terms
of capitalization, at time $ t $.

We shall also introduce the total market capitalization $ X (\cdot)
:= \sum_{i=1}^n X_i (\cdot) $, as well as the market weights (relative
capitalizations) for the individual companies and their ranked
counterparts, respectively,
%
%
\begin{eqnarray} \label{RMW}
\mu_i (t) &:=& { \frac{X_i (t)}{X(t)}} ,\qquad i = 1,\ldots,
n,\quad\mbox{and} \nonumber\\[-8pt]\\[-8pt]
\mu_{(k)} (t) &:=& {\frac{e^{ Z_{k}(t)}}{ X(t)}},\qquad k = 1,\ldots, n
.\nonumber
\end{eqnarray}
\begin{cor} \label{cor: from prop 1}
Under (\ref{eq: drift cond 1}), (\ref{eq: vol cond 1}) and (\ref
{eq: drift cond 2}), the process of ranked deviations $ \widetilde
Z(\cdot):= (Z_{1} (\cdot)- \overline Y(\cdot) , \ldots,
Z_{n}(\cdot) - \overline Y(\cdot) )^{\prime} $ of the
log-capitalizations $ Y_1 (\cdot),\ldots, Y_n(\cdot) $ from their
average, is stable in distribution by Theorem \ref{prop: 1}, and so is
the $ ( (\R_{+})^{n-1} \times\Sigma_{n})$-valued process $
(\Xi(\cdot) , \mfP_{\cdot} ) $, where $ \Xi(\cdot):=(
Z_{1}(\cdot) - Z_{2}(\cdot), \ldots, Z_{n-1}(\cdot) - Z_{n}(\cdot)
)^{\prime} $ is the \textup{rank-gap} process of $ Y(\cdot) $.
\end{cor}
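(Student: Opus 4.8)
The plan is to derive the statement from Proposition \ref{prop: 1} essentially by a change of coordinates, being careful that the map taking name-based deviations to rank-based quantities is Lipschitz and that the discrete component $\mfP_\cdot$ is a measurable function of the continuous state. First I would note that $\widetilde Z(\cdot)$ is a permutation-rearrangement of the coordinates of $\widetilde Y(\cdot)$: indeed $Z_k(t) - \overline Y(t) = \widetilde Y_{\mfP_t(k)}(t)$, so $\widetilde Z(t) = R\big(\widetilde Y(t)\big)$, where $R : \Pi \to \Pi$ is the ranking map sending a vector to its decreasing rearrangement. The map $R$ is continuous (in fact $1$-Lipschitz) and bounded-measurable-to-bounded-measurable, so for any bounded measurable $h$ on $\Pi$ the function $f := h \circ R$ is bounded and measurable on $\Pi$; applying the SLLN \eqref{SLLN} to this $f$ gives
\begin{equation*}
\lim_{T\to\infty} \frac{1}{T}\int_0^T h\big(\widetilde Z(t)\big)\,dt = \int_\Pi h\big(R(y)\big)\,\mu(dy) = \int_\Pi h(z)\,\nu(dz)\,,
\end{equation*}
where $\nu := \mu \circ R^{-1}$ is the pushforward. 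This shows $\widetilde Z(\cdot)$ is stable in distribution with invariant law $\nu$; uniqueness of $\nu$ follows because the SLLN pins down the limiting time-averages of all bounded measurable test functions, and any invariant measure must reproduce these averages.

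Next I would treat the pair $(\Xi(\cdot), \mfP_\cdot)$. The rank-gap vector is recovered from $\widetilde Z$ by the linear map $\Xi_k = \widetilde Z_k - \widetilde Z_{k+1}$, $k = 1,\ldots,n-1$, which lands in $(\R_+)^{n-1}$ by the ordering $Z_1 \ge \cdots \ge Z_n$; this is again continuous, so stability of $\widetilde Z(\cdot)$ transfers to $\Xi(\cdot)$. For the joint process with $\mfP_\cdot$ the point is that $\mfP_t = \mfp^{Y(t)} = \mfp^{\widetilde Y(t)}$ depends only on $\widetilde Y(t)$ through the indicator map, which is a (non-continuous but) Borel-measurable map $\Pi \to \Sigma_n$ that is locally constant on each open chamber $\RR_{\bf p} \cap \Pi$. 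Hence for any bounded function $F$ on $(\R_+)^{n-1} \times \Sigma_n$, the function $y \mapsto F\big(\Xi(R(y))\,,\, \mfp^{y}\big)$ — more precisely $y \mapsto \sum_{{\bf p}\in\Sigma_n} {\bf 1}_{\RR_{\bf p}}(y)\, F\big(\text{(gaps of the sorted }y)\,,\,{\bf p}\big)$ — is bounded and measurable on $\Pi$, and applying \eqref{SLLN} once more yields convergence of the time-averages of $F(\Xi(t),\mfP_t)$ to the integral of this function against $\mu$, i.e. against the pushforward of $\mu$ under $y \mapsto (\Xi(R(y)), \mfp^y)$. This gives the asserted stability and its uniqueness on $(\R_+)^{n-1}\times\Sigma_n$.

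The main obstacle, and the only point requiring genuine care, is the measurability/discontinuity of the ranking and index maps across the boundaries between chambers. On the boundary hyperplanes $\{y_i = y_j\}$ the decreasing rearrangement $R$ is still continuous, but the index map $\mfp^y$ jumps; one must check that this jump set is $\mu$-null — or at least that integrating against $\mu$ is unambiguous — which holds here because the covariance is uniformly nondegenerate by \eqref{eq: bounds S tilde}, so $\mu$ has a density (it is absolutely continuous, the invariant measure of a nondegenerate diffusion) and the union of the finitely many hyperplanes $\{y_i=y_j\}$ has Lebesgue measure zero. With that observation the tie-breaking convention becomes immaterial for the invariant law, and the remaining steps are the routine functional-composition arguments sketched above. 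I would also remark, as the corollary implicitly does, that $\overline Y_1$ in the displayed definition of $\widetilde Z$ should read $\overline Y$, i.e. the same average is subtracted from every coordinate.
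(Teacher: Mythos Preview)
Your proposal is correct and follows essentially the same approach as the paper: the paper's justification (given immediately after the corollary) simply notes that $\widetilde Z(\cdot)$ is obtained by permuting the components of $\widetilde Y(\cdot)$, that $\Xi(\cdot)$ is a linear function of $\widetilde Z(\cdot)$, and that $\mfP_\cdot$ is a measurable (finite-range) function of the state, so stability in distribution transfers from Proposition~\ref{prop: 1}. Your version is more careful about the measurability of the index map and the $\mu$-nullness of the chamber boundaries---points the paper glosses over---and you correctly write $\mfP_t = \mfp^{\widetilde Y(t)}$ rather than the paper's $\mfp^{\widetilde Z(\cdot)}$ (which, read literally, would be the identity permutation), but the underlying argument is the same pushforward-of-the-invariant-measure idea.
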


In fact, since $ \widetilde Z(\cdot) $ is obtained by permuting the
components of $ \widetilde Y(\cdot) $, the stability in
distribution of $ \widetilde Y(\cdot) $ implies stability in
distribution for $ \widetilde Z(\cdot) $ from Theorem \ref{prop:
1}. Moreover, the components of the rank-gap process $ \Xi(\cdot) $
can be written as linear combinations of those of $ \widetilde Z(\cdot
) $, and the index process $ \mfP_{\cdot} $ can be seen as $
\mfP_{\cdot} = \mfp^{\widetilde Z(\cdot)} $, where the range $
\Sigma_{n} $ of the mapping $ \mfp$ is a finite set. Thus, the
process $ (\Xi(\cdot) , \mfP_{\cdot} ) $ is stable in distribution.

We shall denote by $ \Lambda^{k,j}(t):= \Lambda_{Z_{k} - Z_{j}}(t)
$ the local time accumulated at the origin by the nonnegative
semimartingale $ Z_{k}(\cdot) - Z_{j}(\cdot) $ up to time $ t
$ for $1 \le k < j \le n $, and set $ \Lambda^{0, 1} (\cdot) \equiv
0 \equiv\Lambda^{n, n+1}(\cdot) $. Then from Theorem 2.5 of Banner
and Ghomrasni \cite{BG07} it can be shown that we have for $ k = 1,
\ldots, n $, $ 0 \le t < \infty$ the dynamics
%
%
\begin{eqnarray} \label{eq: rank proc 2}
d Z_{k}(t) &=& \sum_{i=1}^{n} {\mathbf1}_{Q_{k}^{(i)}}(Y(t)
) \,d Y_{i}(t) \nonumber\\[-8pt]\\[-8pt]
&&{} + ( N_{k}(t) )^{-1} \Biggl[ \sum_{\ell=k+1}^{n} d
\Lambda^{k,\ell}(t) - \sum_{\ell=1}^{k-1} d \Lambda^{\ell,k}(t)
\Biggr].\nonumber
\end{eqnarray}
Here $ N_{k}( t) $ is the cardinality of the set of indices of
those random variables among $ Y_1 (t),\ldots, Y_n (t) $ which
have the same value as $ Z_{k}( t) $, that is, $ N_{k}( t) :=
\lvert\{ i \dvtx Y_{i}( t) = Z_{k}( t) \} \rvert$. Note that
under the assumptions on the coefficients, the finite variation part of
the continuous semimartingale $ Y(\cdot) $ in (\ref{eq: model}) is
absolutely continuous with respect to Lebesgue measure a.s., and it
follows from an application of Fubini's theorem and an estimate of
Krylov \cite{K71} that the Lebesgue measure of the set $ \{t \dvtx
Y_{i}(t) = Y_{j}(t)\} $ is zero a.s. for $ 1\le i\neq j \le n $.
Thus we can verify the sufficient conditions (2.11 and 2.12) of Theorem 2.5
in \cite{BG07}.

Each local time $ \Lambda^{k,\ell}(\cdot) $ is flat away from the
set $ \{ 0 \le t < \infty\lvert Z_{k}(t) = \cdots= Z_{\ell
}(t)\} $; it increases only when the corresponding coordinate
processes collide with each other. Examples in \cite{BP87,IK09}
study such multiple collisions of order three or higher and use
comparisons with Bessel processes in a crucial manner. Here again, the
nonnegative semimartingale $ Z_{k}(\cdot) - Z_{\ell}(\cdot) $ is
compared to an appropriate Bessel process. Since a Bessel process with
dimension $ \delta>1 $ does not accumulate any local time at the
origin (a consequence of Proposition XI.1.11 of \cite{RY99} and of
Theorem~V.48.6 in \cite{RW00}), appropriate comparison arguments yield
the following result; its proof
is in Section \ref{app: 7.2}.
\begin{lm} \label{lm: for prop 2}
Under (\ref{eq: vol cond 1}), the local times $ \Lambda^{k,\ell
}(\cdot) $ generated by triple or higher-order collisions are
identically equal to zero, that is, $ \Lambda^{k, \ell} (\cdot)
\equiv0 $ for $ 1 \le k, \ell\le n $ and $ |k - \ell| \ge2 $,
and (\ref{eq: rank proc 2}) takes for $ k = 1, \ldots, n ,0
\le t < \infty$ the form
%
%
\begin{equation} \label{eq: rank proc 3}
d Z_{k}(t) = \sum_{i=1}^{n} {\mathbf1}_{Q_{k}^{(i)}}(Y(t)) \,d
Y_{i}(t) + \frac{1}{2}\bigl( d \Lambda^{k,k+1}(t) - d \Lambda^{k-1,
k}(t) \bigr).
\end{equation}
\end{lm}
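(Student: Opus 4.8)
The plan is to establish, first, that $\Lambda^{k,\ell}\equiv 0$ for all $1\le k,\ell\le n$ with $|k-\ell|\ge 2$, and then to obtain (\ref{eq: rank proc 3}) from (\ref{eq: rank proc 2}) by substitution. For the first part I would reduce matters to the case of three consecutive ranks and carry out a comparison with a Bessel process; this comparison is the step I expect to be the main obstacle.

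\emph{Reduction.} For any two nonnegative continuous semimartingales $A,B$ that vanish simultaneously, Tanaka's formula (together with $\int_0^t{\bf 1}_{\{A_s=0\}}\,d\,A_s=\Lambda_A(t)$ and the analogous relation for $B$) gives the identity $\Lambda_{A+B}(t)=\int_0^t{\bf 1}_{\{B=0\}}\,d\,\Lambda_A(s)+\int_0^t{\bf 1}_{\{A=0\}}\,d\,\Lambda_B(s)$, both summands being nondecreasing. Since $Z_k(\cdot)\ge Z_{k+1}(\cdot)\ge\cdots\ge Z_\ell(\cdot)$, one has $\{Z_k=Z_\ell\}\subseteq\{Z_k=Z_{k+2}\}$ whenever $\ell\ge k+2$; applying the identity with $A=Z_k-Z_{k+2}$ and $B=Z_{k+2}-Z_\ell$ and inducting on $\ell-k$ (using the order relations among the $Z$'s, e.g.\ $Z_k=Z_{k+2}\Rightarrow Z_{k+1}=Z_{k+2}$, to dispose of the few terms involving a consecutive-rank local time) reduces the first part to showing $\Lambda^{k,k+2}\equiv 0$ for every $k$ with $k+2\le n$.

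\emph{The Bessel comparison.} Fix such a $k$ and put $V:=Z_k(\cdot)-Z_{k+2}(\cdot)=\Xi_k(\cdot)+\Xi_{k+1}(\cdot)\ge 0$, which vanishes exactly on $\{Z_k=Z_{k+1}=Z_{k+2}\}$. From (\ref{eq: rank proc 2}) one reads off a decomposition $d\,V(t)=\beta(t)\,d\,t+d\,M(t)+d\,\Phi(t)$, where $\beta(\cdot)$ is bounded (a difference of the piecewise-constant drifts in (\ref{eq: model})); $M(\cdot)$ is a continuous martingale whose quadratic variation grows at the rate $\lVert\mfs_{\mfP_t}'(e_{\mfP_t(k)}-e_{\mfP_t(k+2)})\rVert^2$ (with $e_i$ the $i$-th unit vector), which by (\ref{eq: vol cond 1}) (cf.\ (\ref{eq: bounds S tilde})) stays in a fixed compact subinterval of $(0,\infty)$; and $\Phi(\cdot)$ collects the local-time terms. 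On a neighbourhood of any configuration at which precisely ranks $k,k+1,k+2$ collide, $\Phi(\cdot)$ is nondecreasing — the only local times that could act downward on $V$ there are $\Lambda^{k-1,k}$ and $\Lambda^{k+2,k+3}$, inactive unless a fourth rank joins in — and, besides the local time $\Lambda^{k,k+2}$ of $V$ itself, $d\,\Phi$ contains with positive coefficients the local times $\Lambda^{k,k+1}$, $\Lambda^{k+1,k+2}$ of the consecutive gaps $\Xi_k,\Xi_{k+1}$ whose sum is $V$. Applying It\^o's rule to $V^2$ and using that the local-time push carries a factor $V$ (so the self-term $\Lambda^{k,k+2}$ drops out on $\{V=0\}$), one finds that near the origin $V^2$ has a strictly positive drift together with the nonnegative reflection generated as $\Xi_k$ and $\Xi_{k+1}$ hit zero; since $\Xi_k+\Xi_{k+1}=V$, this ``internal'' reflection supplies an effective repulsion from the origin stronger than that of a reflected Brownian motion. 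The skew-symmetry-type geometric analysis of reflected Brownian motions in polyhedral domains, in the spirit of Dieker \& Moriarty \cite{DM09} and Ichiba \& Karatzas \cite{IK09}, shows that under (\ref{eq: vol cond 1}) the combined effect of this reflection and the nondegenerate diffusion bounds $V(\cdot)$ from below, in a neighbourhood of the origin and after a time change, by a Bessel process of some dimension $\delta>1$. By Proposition XI.1.5 of \cite{RY99} such a process accumulates no local time at the origin, and a comparison transfers this to $V(\cdot)$: hence $\Lambda^{k,k+2}\equiv 0$, and by the reduction $\Lambda^{k,\ell}\equiv 0$ for all $|k-\ell|\ge 2$. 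The crux, and the step I expect to be the main obstacle, is identifying the correct comparison process and verifying $\delta>1$ under (\ref{eq: vol cond 1}); I would defer these computations to Section \ref{sec: Appendix}.

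\emph{Deduction of (\ref{eq: rank proc 3}).} Since $\Lambda^{k,\ell}\equiv 0$ whenever $|k-\ell|\ge 2$, the two sums inside the brackets of (\ref{eq: rank proc 2}) collapse to $d\,\Lambda^{k,k+1}(t)$ and $d\,\Lambda^{k-1,k}(t)$ (with the conventions $\Lambda^{0,1}\equiv 0\equiv\Lambda^{n,n+1}$ at the endpoints). It remains to see that $N_k(t)=2$ for $d\,\Lambda^{k,k+1}$- and $d\,\Lambda^{k-1,k}$-almost every $t$. Applying the sum identity above to $A=Z_{k-1}-Z_k,\,B=Z_k-Z_{k+1}$, with $\Lambda_{A+B}=\Lambda^{k-1,k+1}\equiv 0$, forces $\int_0^t{\bf 1}_{\{Z_{k-1}=Z_k\}}\,d\,\Lambda^{k,k+1}=0$; applying it to $A=Z_k-Z_{k+1},\,B=Z_{k+1}-Z_{k+2}$, with $\Lambda_{A+B}=\Lambda^{k,k+2}\equiv 0$, forces $\int_0^t{\bf 1}_{\{Z_{k+1}=Z_{k+2}\}}\,d\,\Lambda^{k,k+1}=0$. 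Hence $d\,\Lambda^{k,k+1}$-almost every $t$ satisfies $Z_{k-1}(t)>Z_k(t)=Z_{k+1}(t)>Z_{k+2}(t)$, so exactly two of $Y_1(t),\ldots,Y_n(t)$ equal $Z_k(t)$, i.e.\ $N_k(t)=2$; the symmetric argument (using $\Lambda^{k-1,k+1}\equiv 0$ and $\Lambda^{k-2,k}\equiv 0$) gives $N_k(t)=2$ for $d\,\Lambda^{k-1,k}$-almost every $t$. Therefore $\big(N_k(t)\big)^{-1}\big[d\,\Lambda^{k,k+1}(t)-d\,\Lambda^{k-1,k}(t)\big]=\tfrac12\big(d\,\Lambda^{k,k+1}(t)-d\,\Lambda^{k-1,k}(t)\big)$, and (\ref{eq: rank proc 2}) becomes (\ref{eq: rank proc 3}).
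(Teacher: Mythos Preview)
Your reduction to $\Lambda^{k,k+2}\equiv 0$ via the local-time sum identity, and your deduction of (\ref{eq: rank proc 3}) at the end, are both fine (the paper handles the latter slightly differently, via $\int_0^t \mathbf 1_{\{Z_a=Z_b\}}\,d\Lambda^{\ell,m}=0$ for $a\le\ell<m\le b$, but the content is the same). The genuine gap is in the Bessel step, and it is conceptual rather than merely a deferred computation. Your object $V=\Xi_k+\Xi_{k+1}$ is a one-dimensional \emph{linear} functional of the two-gap vector; It\^o on $V^2$ produces a drift $d\langle V\rangle$ bounded below by a positive constant, but that corresponds to Bessel dimension $1$, not $>1$. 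The boundary local times $\Lambda^{k,k+1},\Lambda^{k+1,k+2}$ you invoke are singular measures carried by the \emph{faces} $\{\Xi_k=0\}$, $\{\Xi_{k+1}=0\}$ of the quadrant; they do not produce a $1/V$-type interior drift, and no comparison of $V$ with a Bessel$(\delta)$, $\delta>1$, follows from what you wrote. Turning oblique reflection in a wedge into an effective Bessel dimension requires working with a \emph{radial} coordinate in the two-gap plane (this is where the Varadhan--Williams machinery behind \cite{IK09} actually bites), not with the sum $V$. There is also a circularity issue: the decomposition (\ref{eq: rank proc 2}) you start from already contains $\Lambda^{k,k+2}$ and the higher-order $\Lambda^{k,\ell}$'s you wish to annihilate, entering $dZ_k$ and $dZ_{k+2}$ with various signs through the $N_k^{-1}$ factors.

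The paper sidesteps both difficulties by moving to the \emph{name} side. It removes the drift by Girsanov, fixes three \emph{indices} $i<j<k$ (not ranks), and applies It\^o to the radial function $g(y)=\big[(y_i-y_j)^2+(y_j-y_k)^2+(y_k-y_i)^2\big]^{1/2}$ of the \emph{unranked} process $Y(\cdot)$. Since the $Y_i$'s carry no local times, one gets a clean Bessel-type SDE after time change, with running dimension $R(\cdot)\ge\delta_0:=\min_{\mathbf p\in\Sigma_n}\big(\sum_\ell\bar\lambda_{\ell,\mathbf p}\big)/\big(\max_\ell\bar\lambda_{\ell,\mathbf p}\big)>1$; the strict inequality is exactly $\mathrm{rank}(D_{ijk}'\,\mfs_{\mathbf p}\mfs_{\mathbf p}'\,D_{ijk})=2$ under (\ref{eq: vol cond 1}). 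Since $\sqrt{3}\,(\max-\min)\ge g(Y)$, the zero-local-time conclusion transfers via the occupation-time representation first to $\eta_{ijk}:=\max(Y_i,Y_j,Y_k)-\min(Y_i,Y_j,Y_k)$, and from there to the rank-based $\Lambda^{k,\ell}$'s. Doing the Bessel comparison on the name side, where the dynamics are pure diffusions, is the idea your outline is missing.
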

\begin{prop} \label{prop: long gr}
Under the convention (\ref{eq: drift cond 1}) and the assumptions
(\ref{eq: vol cond 1}) and (\ref{eq: drift cond 2}), we obtain a
Strong Law of Large Numbers for local times
%
%
\begin{equation} \label{eq: limit of Lambdak}\quad
\lim_{T\to\infty} \frac{1}{T} \Lambda^{k,k+1}(T) = - 2 \sum_{\ell=1}^k
\Biggl( g_\ell+ \sum_{i=1}^n \gamma_i\theta_{\ell, i} \Biggr)
,\qquad k = 1, \ldots, n-1 ,
\end{equation}
almost surely. Moreover, we obtain the following long-term growth
relations, in addition to those of (\ref{eq: average Y LLN}):
\textup{all log-capitalizations grow at the same rate}
%
%
\begin{equation} \label{eq: Y LLN}
\lim_{T\to\infty} \frac{Y_{i}(T)}{T} = \lim_{T\to\infty}\frac{\log
X_{i}(T)}{T} = \gamma,\qquad i = 1, \ldots, n,
\end{equation}
almost surely. This holds also for the total market capitalization
%
%
\begin{equation} \label{eq: cal X LLN}
\lim_{T\to\infty} \frac{1}{T} \log X(T) = \lim_{T\to\infty} \frac
{1}{T} \log\Biggl( \sum_{i=1}^{n} X_{i}(T) \Biggr) = \gamma,\qquad
\mbox{a.s.,}
\end{equation}
and thus the model is \textup{coherent}; that is, in the notation of
(\ref{RMW}) we have
%
%
\begin{equation}\label{eq: coherence}
\lim_{T\to\infty} \frac{1}{T} \log\mu_{i}(T) = 0 ,\qquad
\mbox{a.s.;\ }i = 1, \ldots, n .
\end{equation}
\end{prop}
\begin{pf}
It follows from Corollary \ref{cor: from prop 1} that
\[
\lim_{T\to\infty} \frac{1}{T} \bigl( Z_{k}(T) - Z_{k+1}(T) \bigr) = 0 ,
\qquad\mbox{a.s.;\ }k = 1, \ldots, n-1 .
\]
Combining this with (\ref{eq: model}), (\ref{eq: occup time 1}) and
(\ref{eq: rank proc 3}), we observe
\begin{eqnarray*}
&& \lim_{T\to\infty} \frac{1}{2T} \bigl( \Lambda^{k-1, k}(T) + \Lambda
^{k+1, k+2}(T) - 2 \Lambda^{k, k+1}(T) \bigr) \\
&&\qquad = g_{k} + \sum_{i=1}^{n} \gamma_{i} \theta_{k, i} - \Biggl(
g_{k+1}+ \sum_{i=1}^{n} \gamma_{i} \theta_{k+1, i}\Biggr) = \mfg_{k} -
\mfg_{k+1} ,\qquad \mbox{a.s.,}
\end{eqnarray*}
where we have set $ \mfg_{k} := g_{k} + \sum_{i=1}^{n} \gamma
_{i}\theta_{k, i} $ for $ k = 1, \ldots, n-1 $. Adding up these
equations over $ k = \ell, \ldots, n-1 $ yields
%
%
\begin{equation}\label{eq: three Lambda}\quad
\lim_{T\to\infty} \frac{1}{2T}\bigl( \Lambda^{\ell-1, \ell}(T) - \Lambda
^{\ell, \ell+1} (T) - \Lambda^{n-1, n}(T) \bigr) = \mfg_{\ell} - \mfg
_{n} ,\qquad \mbox{a.s.}
\end{equation}
for each $ \ell= 1, \ldots, n $; adding up over all these values
of $ \ell$ and using the convention (\ref{eq: drift cond 1}) for
clarity, we obtain
%
%
\begin{equation}
\label{eq: growth last Lambda}
\lim_{T\to\infty} \frac{1}{2T} \Lambda^{n-1, n}(T) = \mfg_{n}
, \qquad \mbox{a.s.}
\end{equation}
In conjunction with (\ref{eq: three Lambda}), we obtain from (\ref{eq:
growth last Lambda}) that for $ k = 1, \ldots, n $
%
%
\begin{equation} \label{eq: growth diff Lambda}\qquad
\lim_{T\to\infty} \frac{1}{2T} \bigl( \Lambda^{k-1, k}(T) - \Lambda
^{k, k+1}(T) \bigr) = \mfg_{k} = g_{k} + \sum_{i=1}^{n} \gamma_{i} \theta
_{k, i} ,\qquad \mbox{a.s.}
\end{equation}
Since $ \sum_{k=1}^n \mfg_k
= 0 $ from (\ref{eq: drift cond 1}) and Corollary \ref{cor: aot 1}, we
obtain (\ref{eq: limit of Lambdak}) from (\ref{eq: growth last
Lambda}) and (\ref{eq: growth diff Lambda}).
From this, (\ref{eq: rank proc 3}), and the strong law of large numbers
for Brownian motion, we get the long-term average growth rate of ranked
log-capitalizations,
\[
\lim_{T\to\infty} \frac{Z_{k}(T)}{T} = \gamma,\qquad \mbox
{a.s.; }k = 1, \ldots, n .
\]
This yields (\ref{eq: Y LLN}); the elementary inequality $ \exp\{
y_{{\mfp^{y}}(1)} \} \le\sum_{i=1}^{n} \exp\{y_{i}\} \le n\times \exp\{
y_{{\mfp^{y}}(1)} \} $ for $ y \in\R^{n} $ then implies (\ref
{eq: cal X LLN}), and equation (\ref{eq: coherence}) is a direct
consequence of (\ref{eq: Y LLN}) and (\ref{eq: cal X LLN}).
\end{pf}
\begin{cor} \label{cor: from prop 2} Under (\ref{eq: drift cond 1}),
(\ref{eq: vol cond 1}) and (\ref{eq: drift cond 2}), the long-term
average occupation times $ \theta_{k, i} $ of (\ref{eq: occup time
1}) satisfy the equilibrium identity
%
%
\begin{equation} \label{eq: occup ident}
\sum_{k=1}^{n} \theta_{k, i} g_{k} + \gamma_{i} = 0,\qquad
i = 1, \ldots, n .
\end{equation}
\end{cor}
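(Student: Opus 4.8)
The plan is to exploit the long-term growth rate of each individual (named) log-capitalization, established in the preceding Proposition, together with the rank-occupation decomposition of the drift of $Y_i(\cdot)$ coming directly from the model equation (\ref{eq: model}). First I would return to (\ref{eq: model}) and write $Y_i(T)/T$ explicitly: dividing (\ref{eq: model}) by $T$ and integrating, the martingale terms $\sum_j \rho_{i,j} W_j(T)/T$ and $\sum_k \sigma_k \int_0^T {\bf 1}_{Q_k^{(i)}}(Y(t))\,dW_i(t)/T$ vanish almost surely as $T \to \infty$ by the strong law of large numbers for Brownian motion (the latter because its quadratic variation grows at most linearly in $T$). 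Hence
\begin{equation*}
\lim_{T\to\infty} \frac{Y_i(T)}{T} = \gamma + \gamma_i + \lim_{T\to\infty} \frac{1}{T}\sum_{k=1}^{n} g_k \int_0^T {\bf 1}_{Q_k^{(i)}}(Y(t))\,dt \,, \qquad \mathrm{a.s.}
\end{equation*}

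Next I would identify the surviving term. By Corollary \ref{cor: aot 1} — whose hypotheses are exactly (\ref{eq: drift cond 1}), (\ref{eq: vol cond 1}), (\ref{eq: drift cond 2}) — the limit $\frac{1}{T}\int_0^T {\bf 1}_{Q_k^{(i)}}(\mathfrak{X}(t))\,dt \to \theta_{k,i}$ exists almost surely; note ${\bf 1}_{Q_k^{(i)}}(\mathfrak{X}(t)) = {\bf 1}_{Q_k^{(i)}}(Y(t))$ since the polyhedra $Q_k^{(i)}$ are scale-free in the sense that ranking the $X_i = e^{Y_i}$ is the same as ranking the $Y_i$. Therefore the displayed limit equals $\gamma + \gamma_i + \sum_{k=1}^n \theta_{k,i}\,g_k$, almost surely. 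On the other hand, the Proposition just proved gives $\lim_{T\to\infty} Y_i(T)/T = \gamma$ almost surely. Equating the two expressions and cancelling $\gamma$ yields $\gamma_i + \sum_{k=1}^n \theta_{k,i}\,g_k = 0$ for each $i = 1, \ldots, n$, which is (\ref{eq: occup ident}).

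I do not expect a serious obstacle here; the corollary is essentially a bookkeeping consequence of results already in hand. The one point that deserves a line of care is the vanishing of the stochastic integral $\frac{1}{T}\sum_{k=1}^n \sigma_k \int_0^T {\bf 1}_{Q_k^{(i)}}(Y(t))\,dW_i(t)$: its quadratic variation at time $T$ is $\int_0^T \big(\sum_k \sigma_k {\bf 1}_{Q_k^{(i)}}(Y(t))\big)^2 dt \le (\max_k \sigma_k)^2\, T$, so by the Dambis–Dubins–Schwarz time-change and the law of large numbers for Brownian motion (or directly by the strong law for continuous martingales with at-most-linearly-growing quadratic variation) it is $o(T)$ almost surely. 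As a sanity check on the final identity, summing (\ref{eq: occup ident}) over $i$ and using that $\vartheta = (\theta_{k,i})$ is doubly stochastic (Corollary \ref{cor: aot 1}) recovers $\sum_k g_k + \sum_i \gamma_i = 0$, which is precisely the standing assumption (\ref{eq: drift cond 1}); this consistency is reassuring though not part of the proof.
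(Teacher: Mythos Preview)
Your proposal is correct and follows essentially the same route as the paper: substitute the long-term growth rate $\lim_{T\to\infty} Y_i(T)/T = \gamma$ from (\ref{eq: Y LLN}) into the model equation (\ref{eq: model}), let the martingale terms vanish, and identify the remaining time-average via (\ref{eq: occup time 1}). The paper compresses this into a single line, whereas you spell out the Dambis--Dubins--Schwarz justification for the vanishing of the rank-weighted stochastic integral; that extra care is appropriate but not a different idea.
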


Indeed, by substituting (\ref{eq: Y LLN}) into (\ref{eq: model}), and
using the strong law of large numbers for Brownian motion, we obtain the
a.s. identities
\[
\lim_{T \to\infty} \frac{1}{T} \sum_{k=1}^{n} g_{k}\int^{T}_{0}
{\mathbf1}_{Q_{k}^{(i)}} (Y(t))\, d t = - \gamma_{i} ,\qquad
i = 1, \ldots, n,
\]
and so in conjunction with (\ref{eq: occup time 1}) we deduce (\ref{eq:
occup ident}).
\begin{example} \label{ex: 1}
Suppose that the rank-based growth parameters are given as $ g_{n} =
(n-1)g $, $ g_{1} = \cdots= g_{n-1}= -g < 0 $ for some $ g >
0 $. This is the ``Atlas configuration,'' in which the company at the
lowest capitalization rank provides all the growth (or support, as with
the Titan of mythical lore) for the entire structure. Suppose also that
the name-based growth rates $\gamma_1,\ldots, \gamma_n $ satisfy
$ \sum_{i=1}^{n} \gamma_{i} = 0 $ and $ \max_{1\le i \le n} \gamma
_{i} < g $.

It is then checked easily that conditions (\ref{eq: drift cond 1}) and
(\ref{eq: drift cond 2}) are satisfied. By Corollary \ref{cor: aot 1},
the average occupation times $ \{ \theta_{k, i}\} $ exist a.s. We
shall provide an explicit expression for the $ \theta_{k, i} $
under an additional condition (\ref{eq: vol cond 2}) on the correlation
structure, in Section \ref{sec: AOT}. For the time being, let us just
remark that in this case we get directly from (\ref{eq: occup ident})
the long-term proportions of time
\[
\theta_{n, i} = \frac{1}{n} \biggl( 1 - \frac{ \gamma_{i} }{ g }
\biggr),\qquad i = 1, \ldots, n,
\]
with which the various companies occupy the lowest (``Atlas'') rank.
\end{example}

\section{Invariant measure} \label{sec: Inv meas}

\subsection{Reflected Brownian motions}
Observe now from (\ref{eq: rank proc 3}) the following representation for the
vector $ \Xi(\cdot) =(\Xi_{1}(\cdot), \ldots, \Xi_{n-1}(\cdot
))^{\prime} $ of gaps in the ranked log-capitalizations $ \Xi
_{k}(\cdot) := Z_{k}(\cdot) - Z_{k+1}(\cdot) = \log(X_{(k)}(\cdot) /
X_{(k+1)}(\cdot)) \ge0 $, $ k=1,\ldots, n-1 ,$
%
%
\begin{equation} \label{eq: RBM form}
\Xi(t) = \Xi(0) + \zeta(t) + \mfR\Lambda(t),\qquad 0 \le t < \infty.
\end{equation}
Here we have set $ \zeta(\cdot):= (\zeta_{1}(\cdot), \ldots, \zeta
_{n-1}(\cdot))^{\prime} $ with
\[
\zeta_{k}(\cdot) = \sum_{i=1}^{n} \int^{\cdot}_{0}
\mathbf{1}_{Q_{k}^{(i)}}(Y(s)) \,d Y(s) - \sum_{i=1}^{n} \int^{\cdot}_{0}
{\mathbf1}_{Q_{k+1}^{(i)}}(Y(s)) \,d Y(s) ;
\]
and we have introduced the vector $ \Lambda(\cdot) := (\Lambda
^{1,2}(\cdot), \ldots, \Lambda^{n-1, n}(\cdot) )^{\prime} = (\Lambda
_{\Xi_{1}}(\cdot),\break\ldots,\Lambda_{\Xi_{n-1}} (\cdot))^{\prime
} $ of local times, as well as the $ ((n-1)\times(n-1)) $ matrix
%
%
\begin{equation} \label{eq: ref matrix}
\mfR:=
\pmatrix{
1 & -1/2 & & &\cr
-1/2 & 1 & -1/2 & &\cr
& \ddots& \ddots& \ddots& \cr
& & -1/2& 1 & -1/2 \cr
& & & -1/2 & 1
}.
\end{equation}
This rank-gap process $ \Xi(\cdot) $ in (\ref{eq: RBM form})
belongs to a class of processes which Harrison and Williams
\cite{HW87Stoch,HW87AP}, Williams \cite{W87b}
and Dai and Williams \cite{DW95} call ``semimartingale reflected (or
regulated) Brownian motions'' (SRBM) in polyhedral domains.

The process $ \Xi(\cdot) $ has state-space $ (\R_{+})^{n-1} $
and behaves like the $ (n-1)$-dimensional continuous semimartingale
$ \zeta(\cdot) $ on the interior of $ (\R_{+})^{n-1} $. When
the face $ \mfF_{k} := \{ (z_{1}, \ldots, z_{n-1})^{\prime} \in(\R
_{+})^{n-1} \vert z_{k} = 0 \} $, $ k = 1, \ldots, n-1 $,
of the boundary is hit, the $k$th component of $ \Lambda(\cdot) $
increases, which causes an instantaneous displacement (reflection) in a
continuous fashion. The directions of this reflection are given by the
entries\vspace*{1pt} in $ \mfr_{k} $, the $k$th column of the matrix $ \mfR
$. For every principal submatrix $ \widetilde\mfR$ of $ \mfR
$, there exists a nonzero vector $ y $ such that $ \widetilde
\mfR y > 0 $, and so the reflection matrix $ \mfR$ satisfies
the so-called \textit{completely-$\mathcal S $} (or ``strictly
semi-monotone'') (see Dai and Williams \cite{DW95} for details) condition
for $ \mathcal S = (\R_{+})^{n-1} $.

Let us define the differential operators $ \mathcal A $ and $
\mathcal D_{k} $, acting on $ C^{2}((\R_{+})^{n-1}) $ functions
%
%
\begin{eqnarray} \label{eq: Diff Oper 1}
[ \mathcal A f ] (z, {\mathbf p}) &:=& \frac{1}{2} \sum_{k,\ell
=1}^{n-1} a_{k, \ell}({\mathbf p}) \frac{\partial^{2} f(z)}{\partial
z_{k}\,
\partial z_{\ell}} + \sum_{k=1}^{n-1} b_{k}({\mathbf p}) \frac{\partial
f}{\partial z_{k}} (z) ,\nonumber\\
&&\eqntext{(z, {\mathbf p}) \in
(\R_{+})^{n-1} \times\Sigma_n ,} \\[-18pt]
\\
{}[ \mathcal D_{k} f] (z) &:=& \langle\mfr_{k} , \nabla f(z)
\rangle,\qquad z \in\mfF_k , k = 1, \ldots,
n-1 .\nonumber
\end{eqnarray}
Here $ ( a_{k, \ell}(\cdot))_{1\le k, \ell\le n-1} $ is the
covariance matrix corresponding to the semimartingale $ \zeta(\cdot
) $ with entries
%
%
\begin{eqnarray} \label{eq: cov for Xi}
a_{k, \ell}({\mathbf p}) &:=& ( \sigma_{k}^{2} + \sigma_{k+1}^{2}
)\cdot{\mathbf1}_{\{k = \ell\}} - \sigma_{k}^{2} \cdot{\mathbf1}_{\{ k =
\ell+1\}} -\sigma_{k+1}^{2} \cdot{\mathbf1}_{\{k = \ell-1\}} \nonumber\\
&&{} + \sum_{m=1}^{n} \bigl( \rho_{{\mathbf p}(k),m} - \rho_{{\mathbf p}(k+1),
m}\bigr)\bigl(\rho_{{\mathbf p}(\ell), m} - \rho_{{\mathbf p}(\ell+1), m}\bigr)
\nonumber\\[-8pt]\\[-8pt]
&&{} + \sum_{(\alpha, \beta) \in\{ (k,\ell), (\ell,
k)\}} \bigl\{ \sigma_{\alpha} \bigl( \rho_{{\mathbf p}(\beta),
\alpha} - \rho_{{\mathbf p}(\beta+1), \alpha} \bigr)\nonumber\\
&&\hspace*{83.2pt}{} + \sigma_{\alpha+1} \bigl(
\rho
_{{\mathbf p}(\beta+1), \alpha+1} - \rho_{{\mathbf p}(\beta), \alpha
+1}\bigr)\bigr\}
\nonumber
\end{eqnarray}
for $ k, \ell= 1, \ldots, n-1 , {\mathbf p} \in\Sigma_{n} $;
whereas the $ ((n-1)\times1)$ vector $\mfr_{k} $ is the $k$th
column of the reflection matrix $ \mfR$. We also define the $
((n-1)\times1) $ drift coefficient vector $ b(\cdot) :=
(b_{1}(\cdot), \ldots, b_{n-1}(\cdot) )^{\prime} $ for the
semimartingale $ \zeta(\cdot) $, with components
%
%
\begin{equation} \label{eq: drift b RBM}
b_{k}({\mathbf p}):= g_{k} + \gamma_{{\mathbf p}^{-1}(k)} - g_{k+1} -
\gamma
_{{\mathbf p}^{-1}(k+1)},\qquad k = 1, \ldots, n-1 , {\mathbf p} \in\Sigma
_{n} .\hspace*{-28pt}
\end{equation}

From Corollary \ref{cor: from prop 1} we know that there exists an
invariant measure $ \nu(\cdot, \cdot) $ for the $ ( (\R
_{+})^{n-1} \times\Sigma_{n})$-valued process $ (\Xi(\cdot), \mfP
_{\cdot}) $. Let us denote by $ \nu_{0}(\cdot) $ the marginal
invariant distribution of $ \Xi(\cdot) $. As a consequence of It\^
o's rule and the formulation of the \textit{submartingale problem} studied
by Stroock and Varadhan \cite{SV71} and Harrison and Williams
\cite{HW87Stoch}, we obtain a characterization of the invariant distribution
$ \nu(\cdot, \cdot) $ for $ ( \Xi(\cdot), \mfP_{\cdot}) $.
\begin{lm} \label{lm: BAR char}
Recall convention (\ref{eq: drift cond 1}), and
conditions (\ref{eq: vol cond 1}) and (\ref{eq: drift cond 2}).
For each $k = 1, \ldots, n-1 $ there is a finite measure $ \nu
_{0k}(\cdot) $, absolutely continuous with respect to Lebesgue
measure on the $k$th face $ \mfF_{k} $, such that the so-called
\textup{Basic Adjoint Relationship} (\textup{BAR}) holds for any $
C^{2}_{b}$-function $ f \dvtx (\R_{+})^{n-1} \rightarrow\R$, namely
%
%
\begin{equation} \label{eq: BAR}\quad
\int_{(\R_{+})^{n-1}\times\Sigma_{n}} [ \mathcal A f ] (z,
{\mathbf p}) \,d \nu(z, {\mathbf p}) + \frac{1}{2} \sum_{k=1}^{n-1} \int
_{\mfF
_{k}} [ \mathcal D_{k} f ] (z) \,d \nu_{0k}(z) = 0 .
\end{equation}
\end{lm}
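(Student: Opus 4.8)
The plan is to derive the Basic Adjoint Relationship from the defining property of the invariant measure $\nu$, using It\^o's formula applied to the semimartingale decomposition \eqref{eq: RBM form} of the rank-gap process $\Xi(\cdot)$. First I would fix an arbitrary $f \in C^2_b((\R_+)^{n-1})$ and extend it to a function $\widehat f(z,\mathbf{p}) := f(z)$ on $(\R_+)^{n-1}\times\Sigma_n$; since $f$ does not depend on the second argument and the index process $\mfP_{\cdot}$ jumps only when $\Xi(\cdot)$ is on the boundary (where some rank gap vanishes), no extra jump terms arise. Applying It\^o's rule to $f(\Xi(t))$ and using that, on $\{\mfP_t = \mathbf{p}\}$, the process $\Xi(\cdot)$ has drift $b(\mathbf{p})$ of \eqref{eq: drift b RBM} and covariance $a(\mathbf{p})$ of \eqref{eq: cov for Xi} from the continuous-semimartingale part $\zeta(\cdot)$, while the bounded-variation push in direction $\mfr_k$ accumulates $d\,\Lambda^{k,k+1}(t)$ on the face $\mfF_k$, we obtain
\begin{equation*}
f(\Xi(t)) = f(\Xi(0)) + \int_0^t \big[\mathcal A f\big](\Xi(s),\mfP_s)\, ds + \sum_{k=1}^{n-1}\int_0^t \big[\mathcal D_k f\big](\Xi(s))\, \tfrac12\, d\,\Lambda^{k,k+1}(s) + M(t)\,,
\end{equation*}
where $M(\cdot)$ is a martingale (the stochastic integral against $W(\cdot)$, which has bounded integrand by \eqref{eq: bounds S tilde} and boundedness of $\nabla f$, hence is a true martingale).

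Next I would take expectations under the stationary regime: start $(\Xi(\cdot),\mfP_{\cdot})$ from its invariant distribution $\nu$, so that the law of $(\Xi(t),\mfP_t)$ is $\nu$ for every $t$, divide by $t$, and let $t\to\infty$. The martingale term drops after dividing by $t$ (its expectation is zero), and the left-hand side is bounded since $f$ is bounded, so $\frac1t\E[f(\Xi(t)) - f(\Xi(0))]\to 0$. For the $\mathcal A f$ term, stationarity gives $\frac1t\E\int_0^t[\mathcal A f](\Xi(s),\mfP_s)\,ds = \int [\mathcal A f]\, d\nu$ for every $t$. The crux is the boundary term: one shows that $\frac1t\,\E[\Lambda^{k,k+1}(t)]$ converges to a finite limit $\lambda_k \ge 0$ — this follows from Corollary \ref{cor: from prop 1} (stability of $(\Xi,\mfP)$) together with the linear growth bounds on the local times established via \eqref{eq: three Lambda}--\eqref{eq: growth last Lambda} in the preceding Proposition, which already prove $\Lambda^{k,k+1}(T)/(2T)$ converges a.s.\ to $\mfg_k-\mfg_{k+1}+\cdots$; combined with uniform integrability (again from the bounds on drift and the fact that $\Lambda$ is dominated by expressions in $\Xi$ and Brownian terms) one upgrades a.s.\ convergence to $L^1$-convergence of the Ces\`aro averages. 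One then defines $\nu_{0k}$ on $\mfF_k$ as the Revuz measure of the additive functional $\tfrac12\Lambda^{k,k+1}(\cdot)$ with respect to $\nu$; explicitly, for bounded measurable $h$ on $\mfF_k$, set $\nu_{0k}(h) := \lim_{t\to\infty}\tfrac1t\,\E\int_0^t h(\Xi(s))\,\tfrac12\,d\Lambda^{k,k+1}(s)$, the limit existing by the same stationarity/uniform-integrability argument applied with test function $h$ in place of $[\mathcal D_k f]$. Absolute continuity of $\nu_{0k}$ with respect to Lebesgue measure on $\mfF_k$ follows from the nondegeneracy \eqref{eq: bounds S tilde} of the diffusion matrix on that face, exactly as in the regularity theory for RBMs in Harrison \& Williams \cite{HW87Stoch}, \cite{HW87AP} and Williams \cite{W87b} (the completely-$\mathcal S$ condition on $\mfR$, verified above, guarantees a well-posed submartingale problem with such regular boundary measures).

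Assembling these three limits yields precisely \eqref{eq: BAR}. The main obstacle I anticipate is the justification that the Ces\`aro time-averages of the expected local-time increments converge and define a genuine $\sigma$-finite (indeed finite) measure $\nu_{0k}$ on each face, together with its absolute continuity: this requires either invoking the general existence theory for stationary RBMs with completely-$\mathcal S$ reflection in polyhedral domains (Dai \& Williams \cite{DW95}, Harrison \& Williams \cite{HW87AP}) to know a priori that the stationary local times have an integrable occupation density on the boundary, or else a direct argument combining the a.s.\ linear-growth rates for $\Lambda^{k,k+1}$ obtained in the previous Proposition with a uniform-integrability estimate coming from the uniform ellipticity \eqref{eq: bounds S tilde} and boundedness of the drift. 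Once the measures $\nu_{0k}$ are in hand, identifying \eqref{eq: BAR} is a routine passage to the limit in the It\^o expansion, and this is the structure I would write out in detail.
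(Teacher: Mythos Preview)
Your approach is essentially the paper's: apply It\^o's formula to $f(\Xi(\cdot))$, take expectations under the stationary law $\nu$, identify the boundary measures $\nu_{0k}$ as Revuz measures of the local-time additive functionals $\Lambda^{k,k+1}$, and invoke Harrison--Williams \cite{HW87Stoch} for their absolute continuity on the faces.

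The one place you make life harder than necessary is your ``main obstacle'': you propose to obtain $\nu_{0k}(h)$ as a Ces\`aro limit $\lim_{t\to\infty}\frac1t\,\E_\nu\int_0^t h(\Xi(s))\,d\Lambda^{k,k+1}(s)$ and then worry about upgrading a.s.\ linear growth of $\Lambda^{k,k+1}$ to $L^1$ via uniform integrability. The paper sidesteps this entirely by invoking the general theory of continuous additive functionals (\cite{AKR67}): under the invariant measure $\nu$, the map $t\mapsto\E_\nu\int_0^t g(\Xi(s),\mfP_s)\,d\Lambda^{k,k+1}(s)$ is \emph{exactly linear} in $t$ for every bounded measurable $g$, so the Revuz identity $\frac1T\,\E_\nu\big[\int_0^T g\,d\Lambda^{k,k+1}\big]=\tfrac12\int g\,d\nu_k$ holds for each $T>0$, not merely in the limit. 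Once you know this, no uniform-integrability argument is needed and the BAR drops out immediately after dividing by $T$. (A minor bookkeeping point: the factor $\tfrac12$ in \eqref{eq: BAR} enters through the normalization of the Revuz measure, not through the It\^o expansion of $f(\Xi)$ against $\mfR\,d\Lambda$; your displayed It\^o formula should have $d\Lambda^{k,k+1}$, not $\tfrac12\,d\Lambda^{k,k+1}$.)
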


This condition is necessary for the stationarity of $ \nu(\cdot, \cdot
) $. A proof of Lemma \ref{lm: BAR char} is given in Section \ref
{sec: prf BAR char}. It is not easy to solve (\ref{eq: BAR}) in
general; however, following Harrison and Williams \cite{HW87AP}, we may
obtain an explicit formula for the invariant joint distribution $ \nu
(\cdot, \cdot) $ under the so-called {\sl skew symmetry condition}
between the covariance and reflection matrices (see Theorem \ref{prop:
linear growing var} and Corollaries \ref{cor: aot 2} and \ref{cor:
market share}).
\begin{lm} \label{lm: vol cond 2} Assume that the rank-based variances
$ \{\sigma_{k}^{2}\} $ grow linearly, and that there are \textup{no
name-based correlations} in (\ref{eq: model}), that is,
%
%
\begin{eqnarray} \label{eq: vol cond 2}
\sigma_{2}^{2} - \sigma_{1}^{2} &=& \sigma_{3}^{2} - \sigma_{2}^{2} =
\cdots=\sigma_{n}^{2} - \sigma_{n-1}^{2},\nonumber\\[-8pt]\\[-8pt]
\rho_{i, j} &=& 0 ,\qquad 1\le i , j \le n .\nonumber
\end{eqnarray}
Then the components of the covariance matrix $ \mfA\equiv(
\mathfrak{ a}_{k, \ell} )_{1\le k , \ell\le n-1} $ from (\ref
{eq: cov for Xi}) become
\[
\mathfrak{ a}_{k, \ell}
= ( \sigma_{k}^{2} + \sigma_{k+1}^{2})\cdot{\mathbf1}_{\{k = \ell
\}} - \sigma_{k}^{2} \cdot{\mathbf1}_{\{ k = \ell+1\}} -\sigma_{k+1}^{2}
\cdot{\mathbf1}_{\{k = \ell-1\}}
\]
and do \textup{not} depend on the permutation $ {\mathbf p} \in\Sigma_{n}$.
Moreover, the matrix $ \mathfrak A $ satisfies the so-called
\textup{skew symmetry} condition,
%
%
\begin{equation}\label{eq: SSC}
( 2 \mfD- \mathfrak{ H} \mfD- \mfD\mathfrak{ H} - 2 \mfA
)_{k, \ell} = 0 ,\qquad 1 \le k, \ell\le n-1 .
\end{equation}
Here we have introduced the diagonal matrix $ \mfD:= \operatorname
{diag}(\mfA) $, and the $ ((n-1)\times(n-1)) $ matrix $
\mathfrak{ H} := I - \mfR$ from the reflection matrix $ \mfR$
in (\ref{eq: ref matrix}).
\end{lm}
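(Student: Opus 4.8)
The argument is a short explicit matrix computation, and I would organize it in three moves. First, the stated form of $\mfA$: substituting $\rho_{i,j} = 0$ for all $i,j$ into (\ref{eq: cov for Xi}) annihilates, term by term, both the sum over $m$ and the sum over $(\alpha,\beta)$, since each summand there carries a $\rho$ factor; only the first line survives, giving $\mfa_{k,\ell} = (\sigma_k^2 + \sigma_{k+1}^2)\,{\bf 1}_{\{k=\ell\}} - \sigma_k^2\,{\bf 1}_{\{k=\ell+1\}} - \sigma_{k+1}^2\,{\bf 1}_{\{k=\ell-1\}}$, with no dependence on $\mathbf{p} \in \Sigma_n$. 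In particular $\mfA$ is symmetric and tridiagonal, and $\mfD := \text{diag}(\mfA)$ has entries $d_k := \sigma_k^2 + \sigma_{k+1}^2$ for $k = 1,\ldots,n-1$.

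Second, I would rewrite the target identity (\ref{eq: SSC}) in a form amenable to inspection. Since $\mfH = I - \mfR$, one has $2\mfD - \mfH\mfD - \mfD\mfH = 2\mfD - (I - \mfR)\mfD - \mfD(I - \mfR) = \mfR\mfD + \mfD\mfR$, so (\ref{eq: SSC}) is equivalent to $2\mfA = \mfR\mfD + \mfD\mfR = \mfR\mfD + (\mfR\mfD)^{\prime}$, using that $\mfR$ and $\mfD$ are symmetric; this is the familiar skew-symmetry form for the reflection matrix $\mfR$ of (\ref{eq: ref matrix}). Third, I would check $2\mfA = \mfR\mfD + \mfD\mfR$ entry by entry. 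Because $\mfD$ is diagonal, $(\mfR\mfD)_{k,\ell} = \mfR_{k,\ell}\,d_\ell$ and $(\mfD\mfR)_{k,\ell} = d_k\,\mfR_{k,\ell}$, so reading $\mfR$ off (\ref{eq: ref matrix}) shows that $\mfR\mfD + \mfD\mfR$ is tridiagonal with diagonal entries $2 d_k$ and off-diagonal entries $(\mfR\mfD + \mfD\mfR)_{k,k\pm 1} = -\tfrac12(d_k + d_{k\pm 1})$. On the diagonal and for $|k-\ell| \ge 2$ this matches $2\mfA$ automatically ($2d_k = 2(\sigma_k^2+\sigma_{k+1}^2)$, resp.\ $0 = 0$); for $\ell = k+1$ the required identity $-\tfrac12(d_k + d_{k+1}) = 2\mfa_{k,k+1} = -2\sigma_{k+1}^2$ becomes, after inserting $d_k = \sigma_k^2 + \sigma_{k+1}^2$ and $d_{k+1} = \sigma_{k+1}^2 + \sigma_{k+2}^2$, the condition $\sigma_k^2 + \sigma_{k+2}^2 = 2\sigma_{k+1}^2$, i.e.\ equality of the consecutive increments $\sigma_{k+1}^2 - \sigma_k^2$; the case $\ell = k-1$ is symmetric.

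There is no genuine obstacle beyond careful indexing: the computation shows that the skew-symmetry condition (\ref{eq: SSC}), once expanded, is in fact \emph{equivalent} to the two hypotheses in (\ref{eq: vol cond 2}) — vanishing name-based correlations together with rank-based variances in arithmetic progression, $\sigma_{k-1}^2 + \sigma_{k+1}^2 = 2\sigma_k^2$. The one place to be cautious is matching the sub- and super-diagonal entries of $\mfa_{k,\ell}$ in (\ref{eq: cov for Xi}) correctly when reading off $\mfa_{k,k+1}$ versus $\mfa_{k,k-1}$. This equivalence is precisely what will afterwards let us invoke the Harrison--Williams machinery in Proposition \ref{prop: linear growing var} to obtain an explicit product-of-exponentials stationary density.
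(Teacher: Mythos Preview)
Your proof is correct and matches the paper's approach: the paper itself says only that the lemma ``is proved by straightforward computation'' and defers the details to \cite{IK09}, and your three-step computation (reduce $\mfA$ via $\rho_{i,j}=0$, rewrite (\ref{eq: SSC}) as $2\mfA=\mfR\mfD+\mfD\mfR$, then check entries) is exactly that computation carried out in full. One small quibble: your closing remark that (\ref{eq: SSC}) is \emph{equivalent} to both hypotheses in (\ref{eq: vol cond 2}) overstates slightly --- what your computation actually establishes is that, \emph{given} $\rho_{i,j}=0$, skew symmetry is equivalent to the arithmetic-progression condition on the $\sigma_k^2$; you have not argued that (\ref{eq: SSC}) forces the $\rho_{i,j}$ to vanish.
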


Lemma \ref{lm: vol cond 2} is proved by straightforward computation;
details are in Section 5.5 of~\cite{IK09}. Note that, even under (\ref
{eq: vol cond 2}), the operator (\ref{eq: Diff Oper 1}) still depends
on the permutation $ {\mathbf p} $ through the drift component $
b({\mathbf p}) $ for $ {\mathbf p} \in\Sigma_{n} $ in (\ref{eq: drift b RBM}).
\begin{theorem} \label{prop: linear growing var}
Under (\ref{eq: drift cond 1}), (\ref{eq: vol cond 1}), (\ref{eq:
drift cond 2}) and (\ref{eq: vol cond 2}), the invariant joint
distribution $ \nu(\cdot, \cdot) $ of the $ ( (\R_{+})^{n-1}
\times\Sigma_{n})$-valued process $ (\Xi(\cdot), \mfP_{\cdot})
$ is
%
%
\begin{equation} \label{eq: joint dist}
\nu(A\times B) := \Biggl( \sum_{{\mathbf q} \in\Sigma_{n}} \prod
_{k=1}^{n-1} \lambda_{{\mathbf q} , k}^{-1}\Biggr)^{-1} \sum_{{\mathbf p} \in
B} \int_{A}\exp( - \langle\lambda_{{\mathbf p}} , z \rangle)
\,d z
\end{equation}
for any measurable sets $ A
\subset(\R_{+})^{n-1} $ and $ B \subset\Sigma_{n} $, where $
\lambda_{{\mathbf p}} := (\lambda_{{\mathbf p}, 1}, \ldots,\break \lambda
_{{\mathbf p},
n-1})^{\prime} $ is the vector with components
%
%
\begin{equation} \label{eq: lambda}
\lambda_{{\mathbf p}, k} := \frac{- 4 \sum_{\ell=1}^{k} (g_{\ell} +
\gamma_{{\mathbf p}(\ell)})}{\sigma_{k}^{2} + \sigma_{k+1}^{2}},\qquad
{\mathbf p} \in\Sigma_{n} , 1\le k \le n-1 .
\end{equation}
In particular, the density $ \wp(\cdot) $ of the marginal
invariant distribution $ \nu_{0}(\cdot) $ of $ \Xi(\cdot) $ has
the \textup{sum-of-products-of-exponentials} form
%
%
\begin{equation} \label{eq: prod density form 1}\quad
\wp(z) := \Biggl( \sum_{{\mathbf q} \in\Sigma_{n}} \prod_{k=1}^{n-1}
\lambda_{{\mathbf q} , k}^{-1}\Biggr)^{-1} \sum_{{\mathbf p} \in\Sigma_{n}}
\exp
( - \langle\lambda_{{\mathbf p}} , z \rangle),\qquad z \in
(\R_{+})^{n-1} .
\end{equation}
\end{theorem}
\begin{pf}
First, we carry out a linear transformation of the state space to
remove the correlation between the components of $ \Xi(\cdot) $;
this is possible, because the covariance matrix $ \mfA$ does
\textit{not} depend on the index process $ \mfP_{\cdot} $, under (\ref{eq:
vol cond 2}) from Lemma \ref{lm: vol cond 2}.
Let $ \mfU$ be the matrix whose columns are the orthogonal
eigenvectors of the covariance $ \mfA$, and let $ \mfL$ be
the corresponding diagonal matrix of eigenvalues such that $\mfL=
\mfU^{\prime} \mfA\mfU$. Define $ \widetilde\Xi(\cdot):= \mfL
^{-1/2} \mfU\Xi(\cdot) $. By this deterministic rotation and
scaling, we obtain
%
%
\begin{equation} \label{eq: def tilde Xi}
\widetilde\Xi(t) = \widetilde\Xi(0) + \widetilde\zeta(t) +
\widetilde\mfR\Lambda(t) ,\qquad 0 \le t < \infty,
\end{equation}
from (\ref{eq: RBM form}) where $ \widetilde\zeta(\cdot) = \mfL
^{-1/2} \mfU\zeta(\cdot) $ is a Brownian motion with drift
coefficient $ \widetilde b( \cdot):= \mfL^{-1/2} \mfU b( \cdot)
$ and $ b(\cdot) $ is defined in (\ref{eq: drift b RBM}).
We may regard $ \widetilde\Xi(\cdot) $ as a reflected Brownian
motion in a new state space $ \mfS:= \mfL^{-1/2} \mfU(\R
_{+})^{n-1} $ with faces $ \widetilde\mfF_{k} := \mfL^{-1/2} \mfU
\mfF_{k} $, $ k = 1, \ldots, n-1 $. The transformed reflection
matrix $ \widetilde\mfR:= \mfL^{-1/2} \mfU\mfR$ can be
written\vspace*{1pt} $ \widetilde\mfR= (\widetilde\mfN+ \widetilde\mfQ)
\widetilde\mfC= (\widetilde\mfr_{1}, \ldots, \widetilde\mfr
_{n-1}) $, where $ \widetilde\mfC:= \mfD^{-1/2} $, $ \mfD:=
\operatorname{diag}(\mfA) $, $ \widetilde\mfN:= \mfL^{1/2} \mfU
\widetilde\mfC= (\widetilde\mfn_{1}, \ldots, \widetilde\mfn_{n-1}
) $, $ \widetilde\mfQ:= \mfL^{-1/2} \mfU\mfR\widetilde
\mfC^{-1} - \widetilde\mfN= (\widetilde\mfq_{1}, \ldots, \widetilde
\mfq_{n-1}) $. The constant vectors $ \widetilde\mfr_{k},
\widetilde\mfq_{k}, \widetilde\mfn_{k} $, $ k = 1, \ldots, n-1
$, are $ ((n-1)\times1) $ column vectors.

The corresponding differential operators $ \widetilde{\mathcal A} ,
\widetilde{\mathcal D_{k}} $ and their adjoints $ \widetilde{
\mathcal A}^{\ast} , \widetilde{\mathcal D}_{k}^{\ast} $ are defined by
%
%
\begin{eqnarray} \label{eq: Diff Oper 2}
[ \widetilde{\mathcal A} f ] (z, {\mathbf p}) &:=& \tfrac{1}{2}
\Delta f(z)+ \langle\widetilde b({\mathbf p}) , \nabla f(z) \rangle,\nonumber\\
{}[ \widetilde{\mathcal D}_{k} f] (z) &:=& \langle\widetilde
\mfr_{k} , \nabla f(z) \rangle,  \nonumber\\[-8pt]\\[-8pt]
{}[ \widetilde{\mathcal A}^{\ast} f ] (z, {\mathbf p}) &:=& \tfrac
{1}{2} \Delta f(z)- \langle\widetilde b({\mathbf p}) , \nabla f(z)
\rangle,\nonumber\\
{}[ \widetilde{\mathcal D}^{\ast}_{k} f] (z) &:=& \langle\widetilde
{\mfr}^{\ast}_{k} , \nabla f(z) \rangle,
\nonumber
\end{eqnarray}
where we define the adjoint direction
$ \widetilde{\mfr}_{k}^{\ast} := \widetilde\mfn_{k} - \widetilde\mfq
_{k} + \langle\widetilde\mfn_{k}, \widetilde\mfq_{k}\rangle
\widetilde\mfn_{k} $
of reflection to $ \widetilde{\mfr}_{k} $
for $ k = 1, \ldots, n-1 $, $ z \in(\R_{+})^{n-1} $, $ {\mathbf p} \in
\Sigma_{n} $.

With these differential operators as in Lemma \ref{lm: BAR char}, we
obtain the (BAR) for the process $ ( \widetilde\Xi(\cdot), \mfP
_{\cdot} ) $ and its invariant distribution $ \widetilde\nu(\cdot
, \cdot) $; that is, for every $ k = 1, \ldots, n-1 $, there
exists a finite measure $ \{\widetilde\nu_{0k}(\cdot)\} $ which is
absolutely continuous with respect to the $ (n-2)$-dimensional
Lebesgue measure on $ \widetilde\mfF_{k} $ and such that for any
$ C^{2}_{b}$-function $ f \dvtx\mfS\mapsto\R$ we have
%
%
\begin{equation} \label{eq: BAR 2}
\int_{\mfS\times\Sigma_{n}} [ \widetilde{\mathcal A} f] (z,
{\mathbf p}) \,d \widetilde\nu(z, {\mathbf p}) + \frac{1}{2} \sum
_{k=1}^{n-1} \int_{\widetilde\mfF_{k}} [ \widetilde{ \mathcal
D}_{k} f] (z) \,d \widetilde\nu_{0k}(z) = 0 .
\end{equation}
Our argument, especially from here onward, relies heavily on the
elaborate analysis given by Harrison and Williams \cite{HW87Stoch,HW87AP}.
The main distinction between their setting and ours is
in the drift coefficient $ b( \cdot) $, which here varies from
chamber to chamber as well as within each chamber, and is evaluated
along the path of the index process $ \mfP_{\cdot} $. Here,
however, we can use the following observation.
\begin{lm} \label{lm: BAR/SSC}
The following two conditions are equivalent:
\begin{enumerate}[(ii)]
\item[(i)] For each\vspace*{2pt} collection of constants $ \{ g_{k}, \gamma
_{i} ; 1 \le i, k \le n \} $, there are $
(n-1)$-dimensional vectors $ \widetilde\lambda_{{\mathbf
p}}:=(\widetilde\lambda_{{\mathbf p},1}, \ldots, \widetilde\lambda
_{{\mathbf p},n-1})^{\prime} $ for $ {\mathbf p} \in\Sigma_{n} $, such
that a
probability measure in the form of sum of products of exponentials
%
%
\begin{equation} \label{eq: prod dens form 2}
\widetilde\nu(A \times B) := c \sum_{\mathbf{ p} \in B } \int_{A} \exp
( \langle\widetilde\lambda_{{\mathbf p}}, z\rangle) \,d z =:
\sum_{{\mathbf p} \in B} \int_{A} \widetilde\wp_{{\mathbf p}}(z) \,d z
\end{equation}
for measurable sets $ A \subset\mfS$ and $ B \subset\Sigma
_{n} $,
satisfies (\ref{eq: BAR 2}) for $ f(\cdot) \in C_{c}^{2}(\mfS) $,
where $ c $ in (\ref{eq: prod dens form 2}) is a normalizing constant.
\item[(ii)] The covariance and the direction of reflection
satisfy the skew symmetry condition (\ref{eq: SSC}).
\end{enumerate}
\end{lm}

Indeed, substituting (\ref{eq: prod dens form 2}) into (\ref{eq: BAR
2}) and combining the summation over $ {\mathbf p} \in\Sigma_{n} $, we
observe that the left-hand side of
(\ref{eq: BAR 2}) becomes
\[
\sum_{{\mathbf p} \in\Sigma_{n}} \Biggl\{ \int_{\mfS} [ \widetilde
{\mathcal A} f ](z, {\mathbf p}) \cdot\widetilde\wp_{{\mathbf p}}(z)\,
d z + \frac{1}{2} \sum_{k=1}^{n-1} \int_{\widetilde\mfF_{k}} [
\widetilde{ \mathcal D}_{k} f ] (z) \cdot\widetilde\wp_{{\mathbf p}}(z)
\,d z \Biggr\}
\]
for $ f \in C_{c}^{2}(\mfS) $, where the expression in the curly
bracket corresponds exactly to the BAR condition studied in
\cite{HW87AP} with some differences in notation. This way, we may
reduce our
problem to the case of \cite{HW87AP}. Following the proof of Lemma 7.1
in \cite{HW87AP}, we observe that condition (i) in Lemma \ref{lm:
BAR/SSC} is equivalent to the following conditions (iii) and (iv), where:

\begin{enumerate}[(iii)]
\item[(iii)] $ [ \widetilde{\mathcal A}^{\ast} \widetilde{\wp
}_{\cdot} ](\cdot, \cdot) = 0 $ in $ \mfS\times\Sigma_{n}
$, and

\item[(iv)] $ [ \widetilde{\mathcal D}^{\ast}_{k}
\widetilde{\wp}_{\mathbf p}](\cdot) = 2 b_{k}(\cdot) \widetilde\wp
_{\mathbf p}(\cdot) $ on $ \widetilde\mfF_{k} $ for $ k = 1,
\ldots, n-1 $, $ {\mathbf p} \in\Sigma_n $.
\end{enumerate}

Here the adjoint operators $ \widetilde{\mathcal A}^{\ast}
$, $ \widetilde{\mathcal D}_{k}^{\ast} $ are defined in (\ref{eq:
Diff Oper 2}).

Then\vspace*{1pt} the same reasoning as in the proof of Theorem 2.1 in \cite{HW87AP}
yields our Lemma \ref{lm: BAR/SSC}, and we obtain $ \widetilde\lambda
_{\mathbf p} = 2 (I - \widetilde\mfN\widetilde\mfQ)^{-1} b({\mathbf p}) $
for $ {\mathbf p} \in\Sigma_{n} $ along the way. This gives the
invariant distribution $ \widetilde\nu(\cdot) $ of $ \widetilde
\Xi(\cdot) $ in (\ref{eq: def tilde Xi}). Now transforming back to
$ \Xi(\cdot) $, we obtain (\ref{eq: lambda}), (\ref{eq: joint
dist}) and then (\ref{eq: prod density form 1}).
\end{pf}
\begin{example} \label{ex: first order}
With $ \gamma_{i} = 0 $, $ \rho_{i, j} = 0 $, $ 1 \le i, j
\le n $ and $ \sigma_{1}^{2} = \cdots= \sigma_{n}^{2} $, we
recover the case studied by Banner, Fernholz and Karatzas \cite{BFK05}
and Pitman and Pal~\cite{PP08}. Our Theorem \ref{prop: linear growing
var} is an extension of their results, to the case of variances that
are not necessarily equal and, as far as
the second of these papers is concerned, to a finite number of particles.
\end{example}

\subsection{Average occupation times} \label{sec: AOT}
The long-term average occupation time $ \theta_{{\mathbf p}} $ of the
vector process $ \mathfrak{ X}(\cdot) $ in the polyhedral chamber
$ \RR_{{\mathbf p}} $ of (\ref{eq: theta p def}) is the probability
mass $ \nu_{1} ({\mathbf p}) := \nu( (\R_{+})^{n-1}, {\mathbf p}
) $ assigned to such a particular chamber by the marginal invariant
distribution of the index process $ \mfP_{\cdot} $, which we can
compute directly from (\ref{eq: joint dist}) for $ {\mathbf p} \in\Sigma
_{n} $.
\begin{cor} \label{cor: aot 2}
Under the assumptions of Theorem \ref{prop: linear growing var}, the
long-term average occupation time $ \theta_{\mathbf p} $ of $
\mathfrak{ X}(\cdot) $ in the chamber $ \RR_{\mathbf p} $ for $
{\mathbf p} \in\Sigma_{n} $, and the long-term proportion $ \theta_{k,
i} $ of time spent by company $i$ in the $k$th rank as in (\ref{eq:
occup time 1}), are explicitly given by the respective formulae
%
%
\begin{equation}\label{eq: theta p}
\theta_{{\mathbf p}}
= \Biggl( \sum_{\mathbf{ q} \in\Sigma_{n}} \prod_{j=1}^{n-1} \lambda
_{{\mathbf q} , j}^{-1} \Biggr)^{-1} \prod_{j=1}^{n-1} \lambda_{{\mathbf p},
j}^{-1} \quad\mbox{and}\quad \theta_{k, i} = \sum\theta_{\mathbf p}.
\end{equation}
Here $ \lambda_{\mathbf p} $ is as in (\ref{eq: lambda}), and the
summation for $ \theta_{k, i} $ is taken over the set $ \{ {\mathbf p}
\in\Sigma_{n} \vert{\mathbf p}(k) = i \} $ for $ 1\le i,
k \le n $.
\end{cor}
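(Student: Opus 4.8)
\emph{Proof proposal.}
The plan is to identify the geometric occupation time $\theta_{\bf p}$ with the mass $\nu_{1}({\bf p})$ that the marginal invariant law of the index process assigns to the chamber $\RR_{\bf p}$, and then to read off that mass from the explicit formula (\ref{eq: joint dist}) by an elementary integration.

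As recorded in the discussion following Corollary \ref{cor: aot 1}, the limit defining $\theta_{\bf p}$ in (\ref{eq: theta p def}) exists a.s.\ and equals $\mu(\RR_{\bf p})$, where $\mu$ is the unique invariant probability measure of $\widetilde Y(\cdot)$ from Proposition \ref{prop: 1}; this uses that membership of a point in a polyhedral chamber is invariant under the diagonal shift $y\mapsto y-({\mathfrak 1}^{\prime}y/n)\,{\mathfrak 1}$, so that ${\bf 1}_{\RR_{\bf p}}(\mathfrak{X}(t))={\bf 1}_{\RR_{\bf p}}(\widetilde Y(t))$ and the Strong Law (\ref{SLLN}) applies to the bounded measurable function $f={\bf 1}_{\RR_{\bf p}}$. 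Since $(\Xi(\cdot),\mfP_{\cdot})$ is the image of $\widetilde Y(\cdot)$ under the measurable map $\Phi$ sending a vector to the vector of gaps of its decreasing rearrangement together with its rank-permutation, and since $\RR_{\bf p}\cap\Pi=\{\widetilde y\in\Pi:\mfp^{\widetilde y}={\bf p}\}$, the invariant law $\nu$ of $(\Xi(\cdot),\mfP_{\cdot})$ coincides, by uniqueness, with the pushforward of $\mu$ under $\Phi$; hence $\theta_{\bf p}=\mu(\RR_{\bf p})=\nu\big((\R_{+})^{n-1}\times\{{\bf p}\}\big)=\nu_{1}({\bf p})$.

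Then I would substitute $A=(\R_{+})^{n-1}$ and $B=\{{\bf p}\}$ into (\ref{eq: joint dist}). It is crucial here to observe that $\lambda_{{\bf p},k}>0$ for every $k=1,\ldots,n-1$ and every ${\bf p}\in\Sigma_{n}$: in (\ref{eq: lambda}) the numerator $-4\sum_{\ell=1}^{k}(g_{\ell}+\gamma_{{\bf p}(\ell)})$ is strictly positive by the stability condition (\ref{eq: drift cond 2}), while the denominator $\sigma_{k}^{2}+\sigma_{k+1}^{2}$ is positive since $\sigma_{k}>0$ by (\ref{eq: vol cond 1}). Consequently $\int_{(\R_{+})^{n-1}}\exp(-\langle\lambda_{\bf p},z\rangle)\,dz$ converges and equals $\prod_{k=1}^{n-1}\lambda_{{\bf p},k}^{-1}$, and the normalizing constant $\big(\sum_{{\bf q}\in\Sigma_{n}}\prod_{k=1}^{n-1}\lambda_{{\bf q},k}^{-1}\big)^{-1}$ is finite and positive for the same reason; this gives the first formula in (\ref{eq: theta p}). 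The formula for $\theta_{k,i}$ then follows at once: Corollary \ref{cor: aot 1} already supplies the additive relation $\theta_{k,i}=\sum_{\{{\bf p}\,:\,{\bf p}(k)=i\}}\theta_{\bf p}$, coming from the partition $Q_{k}^{(i)}=\bigcup_{\{{\bf p}\,:\,{\bf p}(k)=i\}}\RR_{\bf p}$ up to a Lebesgue-null set, so one inserts the expression just obtained for $\theta_{\bf p}$.

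I do not expect a serious obstacle, since all the analytic content is carried by Proposition \ref{prop: linear growing var}. The only points requiring care are the bookkeeping that identifies $\theta_{\bf p}$ with $\nu_{1}({\bf p})$ --- matching the invariant law of $\widetilde Y(\cdot)$ with that of $(\Xi(\cdot),\mfP_{\cdot})$ through the measurable change of variables above --- and the elementary but essential verification that the exponents $\lambda_{{\bf p},k}$ are strictly positive, which is precisely what keeps all the integrals in sight finite.
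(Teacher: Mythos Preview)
Your proposal is correct and follows essentially the same approach as the paper: the paper's argument (given in the short paragraph preceding the corollary) simply identifies $\theta_{\bf p}$ with the marginal mass $\nu_{1}({\bf p})=\nu\big((\R_{+})^{n-1},\{\bf p\}\big)$ and reads it off from (\ref{eq: joint dist}), then invokes Corollary~\ref{cor: aot 1} for $\theta_{k,i}$. Your version supplies more of the bookkeeping (the pushforward identification and the positivity of the $\lambda_{{\bf p},k}$) than the paper spells out, but the route is the same.
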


From Corollary \ref{cor: from prop 2}, the average occupation times $
(\theta_{k, i}) $ satisfy the equilibrium identity (\ref{eq: occup
ident}). As a sanity check, we verify this
identity for the expressions of (\ref{eq: theta p}), through some
algebraic computations in Section \ref{sec: sanity check}.
\begin{example} \label{ex: namvar}
It should be noted that in the presence of name-based variances,
\eqref{eq: theta p} can fail significantly. Consider the case where
$n=3$, with $\gamma_i=0$, for $i=1,2,3$; $\sigma_k=\sigma>0$, for
$k=1,2,3$; $g_3=g>0$, $g_2=0$ and $g_1=-g$; all
$\rho_{i,j}$ is zero for $i, j=1,2,3$ except $\rho_{3,3}=\rho\gg\sigma
$. In this case, $ Y_1 (\cdot) $ and $ Y_2 (\cdot) $ will vibrate
quietly in the middle with variance rate $\sigma^2$, while $ Y_3 (\cdot
) $, with much greater variance rate $ (\sigma+\rho)^{2} $, will
be wandering far and wide. From Corollary \ref{cor: aot 1} and (\ref
{eq: occup ident}) we obtain
%
%
\begin{equation}\label{eq: theta namvar}
\vartheta= ( \theta_{k,i} )_{1\le i,k \le3} = \pmatrix{
\dfrac{1-\alpha}{2} & \dfrac{1-\alpha}{2} & \alpha\vspace*{2pt}\cr
\alpha& \alpha& 1 - 2 \alpha\vspace*{2pt}\cr
\dfrac{1-\alpha}{2} & \dfrac{1-\alpha
}{2} &\alpha},
\end{equation}
where the parameter $ \alpha$ is in the interval $ (1 /
3,1 / 2) $ for $ \rho> 0 $. The upper bound $ 1 / 2
$ is obtained as $ \lim_{\rho\to\infty} \theta_{1,3} $.
Without name-based variances, that is, if the $\rho_{i,j}$ were all
zero, the $ Y_i (\cdot) $ would each spend the same proportion of
time in every rank, yielding a matrix $ \vartheta$ in (\ref{eq:
theta namvar}) with all entries equal to $ 1 / 3 $ from
Corollary \ref{cor: aot 2}. This gives the lower bound $ 1 / 3 $.
\end{example}
\begin{example} \label{ex: occup 1}
Let us consider a numerical computation of $ (\theta_{k, i}) $ for
descending name-based drifts $ \gamma_{i} $ and ascending
rank-based drifts $ g_{k} $, for example, $ n=10 $ and $
\sigma_{k}^{2} = 1+k $, as well as $ g_{k} = -1 $ for $ k= 1,
\ldots, 9 $, $ g_{10} = 9 $, $ \gamma_{i} = 1 - ( 2 i )
/ (n+1) $ for $ i = 1, \ldots, n $. This is a rather
extreme case of Example \ref{ex: 1}, with $g=1$. The overall maximum is
$ \theta_{1,1} = 0.5184 $, and the overall minimum is $ \theta
_{1, 10} = 0.00485 $. The company ``$ i=1 $'' stays at the first
rank longer than any other companies because of its relatively strong
name-based drift; whereas the company ``$ i=10 $'' stays at the first
rank only for a tiny amount of time because of its relatively poor
name-based drift.

%
%
\begin{figure}

\includegraphics{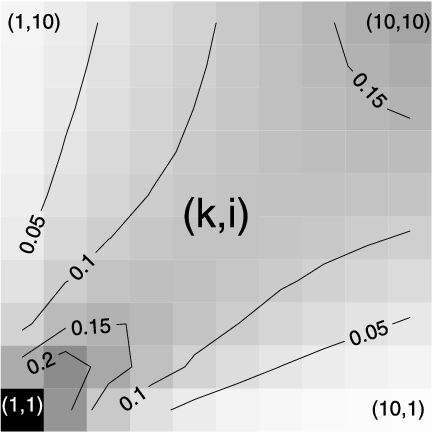}

\caption{Different values of $ \{\theta_{k,i}\} $ for
$ (k, i) $, when the parameters are specified for an extreme case
in Example \protect\ref{ex: occup 1}.} \label{fig: heatmap 1}
\end{figure}

Figure \ref{fig: heatmap 1} shows a gray scale heat map for the
different values of $ \{\theta_{k,i}\} $; of course we know from
Example \ref{ex: 1} that $ \theta_{10, i} = i / 55 $, $ i=1,\ldots, 10 $.
\end{example}

For a larger number of companies, say $ n \sim5000 $, it seems
rather hopeless for the current computational environment to perform
direct computations of $ \theta_{k,i} $ via the sum of (\ref{eq:
theta p}) over $ (n-1)! $ permutations in general.

\subsection{Capital distribution curve}
The capital distribution curve is the log-log plot of market weights
in descending order, as in (\ref{LogLog}). The empirical capital
distribution curves, for the U.S. stock market over the seven decades
1929--1999, are shown in \cite{F02} (Figure 5.1 on page 95). Our next
result computes the capital distribution curves directly from Theorem
\ref{prop: linear growing var}, from the gaps $ \Xi_k(\cdot) = \log(
\mu_{(k)}(\cdot) / \mu_{(k+1)}(\cdot) ) $ in the ranked log-market-weights
to the ranked log-market-weights $ \mfc_k(\cdot)
:= \log\mu_{(k)}(\cdot) $ themselves.
\begin{cor} \label{cor: market share} Under the assumptions of Theorem
\ref{prop: linear growing var}, the ranked market weights $ \mu
_{(1)}(\cdot), \ldots, \mu_{(n)}(\cdot) $ in (\ref{LogLog}), (\ref
{RMW}) have invariant distribution with
%
%
\begin{equation}\label{eq: dens m}\quad
\wp(m_{1}, \ldots, m_{n-1} ) = \sum_{{\mathbf p} \in\Sigma_{n}} \Biggl[
\theta_{\mathbf p} \cdot\prod_{k=1}^{n-1} \lambda_{{\mathbf p}, k} \cdot
\Biggl( \prod_{j=1}^{n} m_{j}^{\lambda_{{\mathbf p},j} - \lambda_{{\mathbf p},j-1}
+1} \Biggr)^{-1} \Biggr]
\end{equation}
as its density, for $ 0 < m_{n} \le m_{n-1} \le\cdots\le m_{1} <
1 $ and $ m_{n} = 1- m_{1} - \cdots- m_{n-1} $. Here we set $
\lambda_{{\mathbf p}, 0} = 0 = \lambda_{{\mathbf p}, n} $, $ {\mathbf
p} \in
\Sigma_{n} $, for notational simplicity.

Moreover, the log-ranked market weights $ \mfc_{k}(\cdot) = \log\mu
_{(k)}(\cdot) $ have invariant distribution with density
%
%
\begin{equation}\label{eq: dens c}\qquad
\wp(c_{1}, \ldots, c_{n-1} ) = \sum_{{\mathbf p} \in\Sigma_{n}} \Biggl[
\theta_{\mathbf p} \cdot\prod_{j=1}^{n-1}\bigl( \lambda_{{\mathbf p}, j}
\cdot
e^{- (\lambda_{{\mathbf p},j} - \lambda_{{\mathbf p}, j+1} ) c_{j}} \bigr)
\cdot
e^{\lambda_{\mathbf{p}, n-1} c_{n}}\Biggr]
\end{equation}
for $ -\infty< c_{n} \le\cdots\le c_{2} \le c_{1} < 0 $, $
c_{n} = \log( 1- \sum_{j=1}^{n-1} e^{c_{j}} ) $.
\end{cor}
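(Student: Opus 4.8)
The plan is to obtain both invariant densities as push-forwards of the density $\wp$ of $\nu_0$ from Proposition~\ref{prop: linear growing var} under an explicit change of variables. As a preliminary normalization I would combine (\ref{eq: prod density form 1}) with the identity $\theta_{\bf p}\prod_{j=1}^{n-1}\lambda_{{\bf p},j}=\big(\sum_{{\bf q}\in\Sigma_n}\prod_{j=1}^{n-1}\lambda_{{\bf q},j}^{-1}\big)^{-1}$ from Corollary~\ref{cor: aot 2}, so as to write the joint invariant density of $(\Xi(\cdot),\mfP_\cdot)$ at $(z,{\bf p})$ as $\theta_{\bf p}\prod_{j=1}^{n-1}\lambda_{{\bf p},j}\,e^{-\langle\lambda_{\bf p},z\rangle}$, and in particular $\wp(z)=\sum_{{\bf p}\in\Sigma_n}\theta_{\bf p}\prod_{j=1}^{n-1}\lambda_{{\bf p},j}\,e^{-\langle\lambda_{\bf p},z\rangle}$ on $(\R_+)^{n-1}$.

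Next I would observe that the ranked market weights are functions of the rank-gap vector alone: from $\mu_{(k)}=e^{Z_k}\big/\sum_{j=1}^n e^{Z_j}$ and $Z_j-Z_n=\sum_{\ell=j}^{n-1}\Xi_\ell$ one gets $\mu_{(k)}=\exp\big(\sum_{\ell=k}^{n-1}\Xi_\ell\big)\big/\sum_{j=1}^n\exp\big(\sum_{\ell=j}^{n-1}\Xi_\ell\big)$, with inverse $\Xi_k=\log\mu_{(k)}-\log\mu_{(k+1)}$, $k=1,\dots,n-1$. Hence the map $z=(z_1,\dots,z_{n-1})\mapsto m=(m_1,\dots,m_{n-1})$, $m_n:=1-\sum_{j<n}m_j$, is a $C^1$ diffeomorphism of the interior of $(\R_+)^{n-1}$ onto $\{\,0<m_n<\cdots<m_1,\ \sum_j m_j=1\,\}$; since $(\mu_{(1)}(\cdot),\dots,\mu_{(n)}(\cdot))$ is a bounded continuous function of $\widetilde Y(\cdot)$ factoring through $\Xi(\cdot)$, the Strong Law~(\ref{SLLN}) identifies its invariant law with the push-forward of $\nu_0$ under $z\mapsto m$, and that of $(\mfc_1(\cdot),\dots,\mfc_n(\cdot))$ with its further push-forward under coordinatewise $\log$. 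It then remains to apply the change-of-variables formula, for which I need the Jacobian of $z\mapsto m$.

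That Jacobian is the only genuine computation. Differentiating $z_k=\log m_k-\log m_{k+1}$ (with $m_n=1-\sum_{j<n}m_j$) shows that $\big(\partial z_k/\partial m_\ell\big)_{1\le k,\ell\le n-1}$ factors as $J\cdot\mathrm{diag}(m_1^{-1},\dots,m_{n-1}^{-1})$, where in the logarithmic coordinates $c_j=\log m_j$ the matrix $J=\big(\partial z_k/\partial c_\ell\big)$ has rows $e_k-e_{k+1}$ for $k\le n-2$ and bottom row $e_{n-1}+(m_1/m_n,\dots,m_{n-1}/m_n)$. Replacing the last column of $J$ by the sum of all its columns annihilates rows $1,\dots,n-2$ and leaves $1/m_n$ as the sole nonzero entry of that column; a cofactor expansion then gives $\det J=1/m_n$, hence $|\det(\partial z/\partial m)|=(m_1\cdots m_n)^{-1}$. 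Finally, an Abel summation of $\sum_{k=1}^{n-1}\lambda_{{\bf p},k}(\log m_k-\log m_{k+1})$, with the endpoint conventions $\lambda_{{\bf p},0}=0=\lambda_{{\bf p},n}$, rewrites $e^{-\langle\lambda_{\bf p},z\rangle}$ as $\prod_{j=1}^n m_j^{-(\lambda_{{\bf p},j}-\lambda_{{\bf p},j-1})}$; multiplying $\wp$ by the Jacobian $(m_1\cdots m_n)^{-1}$ then produces (\ref{eq: dens m}), and the additional logarithmic substitution $m_j=e^{c_j}$, whose Jacobian contributes $\prod_{j<n}e^{c_j}$, yields the density (\ref{eq: dens c}) of the log-ranked weights. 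I expect no obstacle beyond the determinant evaluation; the remaining work is bookkeeping, with the only real care needed to keep the telescoping exponents of the $m_j$ and the boundary conventions $\lambda_{{\bf p},0}=0=\lambda_{{\bf p},n}$ mutually consistent.
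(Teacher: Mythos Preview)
Your proposal is correct and follows exactly the route the paper indicates: the paper states only that the corollary ``computes the capital distribution curves directly from Proposition~\ref{prop: linear growing var}, using change of variables'' and gives no further details, so your argument---identifying $\mu_{(k)}$ as a function of the gap vector $\Xi$, computing the Jacobian $\lvert\det(\partial z/\partial m)\rvert=(m_1\cdots m_n)^{-1}$ via the factorization through $c_j=\log m_j$, and applying Abel summation to $\langle\lambda_{\bf p},z\rangle$---supplies precisely the missing computation. The rewriting of the normalizing constant via Corollary~\ref{cor: aot 2} as $\theta_{\bf p}\prod_j\lambda_{{\bf p},j}$ is also the natural way to land on the stated form of~(\ref{eq: dens m}).
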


From the invariant density functions given by (\ref{eq: prod density
form 1}) and (\ref{eq: dens m}), (\ref{eq: dens c}) [or simply (\ref
{eq: theta p})], the piecewise linear capital distribution curve (\ref
{LogLog}) has the expected slope
%
%
\begin{equation} \label{eq: slope CDC}\quad
\E^{\nu} \biggl[ \frac{\log\mu_{(k+1)} - \log\mu_{(k)}}{\log(k+1) -
\log k} \biggr] = - \frac{\E^{\nu}( \Xi_{k} )}{\log( 1+ k^{-1})}
= - \frac{ \sum_{{\mathbf p} \in\Sigma_{n}} \theta_{{\mathbf p}} \lambda
_{{\mathbf p}, k}^{-1}}{\log( 1+ k^{-1})}
\end{equation}
between the $k$th and the $(k+1)$st ranked stocks for
$ k = 1, \ldots, n-1 $, and the initial value
\[
\E^{\nu} \bigl( \log\mu_{(1)}\bigr) = \E^{\nu} ( \mfc_{1}) = \E^{\nu} \bigl[ -
\log\bigl( 1 + e^{-\Xi_{1}} + e^{-(\Xi_{1} + \Xi_{2})} + \cdots+
e^{-(\Xi_{1} + \cdots+ \Xi_{n-1})} \bigr) \bigr]
\]
for the first rank. From (\ref{eq: joint dist}) this expected initial
value may be obtained through a Monte Carlo simulation of generating
$ (n-1) $ independent exponential random variables with intensities
$ \lambda_{{\mathbf p}, j} $ for $ j = 1, \ldots, n-1 $, $ {\mathbf p}
\in\Sigma_{n} $. From (\ref{eq: slope CDC}) we obtain the
following simple criterion for convexity (or concavity) of the expected
capital distribution curves.
\begin{cor} \label{cor: CDC} Under the assumptions of Theorem \ref
{prop: linear growing var}, a sufficient condition for the expected
capital distribution curve $ \log k \mapsto\E^{\nu}(\log\mu
_{(k)}) $ under the invariant distribution $ \nu$ to be
\textup{convex} (resp., \textup{concave}), is that
%
%
\begin{equation} \label{eq: criterion CDC}
\lambda_{{\mathbf p}, k+1} \log\biggl( 1+ \frac{1}{k+1}\biggr) - \lambda_{{\mathbf
p}, k} \log\biggl( 1+ \frac{1}{k}\biggr) \ge0\qquad \forall{\mathbf p} \in\Sigma_{n}
\end{equation}
(resp., $\le$) hold for each $ k = 1, \ldots, n-2 $, where $
\lambda_{{\mathbf p}, k} $ is given in (\ref{eq: lambda}).
\end{cor}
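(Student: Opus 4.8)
The plan is to use the elementary fact that a continuous piecewise-linear curve is convex (respectively, concave) exactly when its successive slopes are nondecreasing (respectively, nonincreasing), together with the slope formula (\ref{eq: slope CDC}), which has already been derived from the invariant density.

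First I would record that $\log k \mapsto \E^{\nu}(\log \mu_{(k)})$ is, by construction, the piecewise-linear interpolant of the $n$ points $\big(\log k,\, \E^{\nu}(\log \mu_{(k)})\big)$, $k = 1, \ldots, n$, so that it is convex on its whole range if and only if at each interior breakpoint $\log(k+1)$ — that is, on each pair of adjacent segments, which together span $[\log k, \log(k+2)]$, for $k = 1, \ldots, n-2$ — the left-hand slope does not exceed the right-hand slope; concavity corresponds to the reversed inequality. By (\ref{eq: slope CDC}) the slope over $[\log k, \log(k+1)]$ equals
\[
s_{k} \,:=\, -\,\frac{\E^{\nu}(\Xi_{k})}{\log(1 + k^{-1})} \,=\, -\,\frac{1}{\log(1 + k^{-1})}\sum_{{\bf p} \in \Sigma_{n}} \theta_{{\bf p}}\, \lambda_{{\bf p}, k}^{-1}\,,
\]
so the curve is convex precisely when $s_{k} \le s_{k+1}$ for every $k = 1, \ldots, n-2$.

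Next I would translate $s_{k} \le s_{k+1}$ into a statement about the $\lambda_{{\bf p}, k}$. Because $\log(1 + k^{-1}) > 0$ and, by the stability condition (\ref{eq: drift cond 2}) together with the defining formula (\ref{eq: lambda}), every $\lambda_{{\bf p}, k}$ is strictly positive, the quantity $\E^{\nu}(\Xi_{k}) = \sum_{{\bf p}} \theta_{{\bf p}} \lambda_{{\bf p},k}^{-1}$ is positive and each slope $s_{k}$ is negative; hence $s_{k} \le s_{k+1}$ is equivalent to
\[
\frac{1}{\log(1 + (k+1)^{-1})}\sum_{{\bf p} \in \Sigma_{n}} \theta_{{\bf p}}\, \lambda_{{\bf p}, k+1}^{-1} \;\le\; \frac{1}{\log(1 + k^{-1})}\sum_{{\bf p} \in \Sigma_{n}} \theta_{{\bf p}}\, \lambda_{{\bf p}, k}^{-1}\,.
\]
Since the occupation-time weights $\theta_{{\bf p}}$ are nonnegative and sum to one (Corollary \ref{cor: aot 1}), this averaged inequality follows as soon as the termwise inequality
\[
\frac{\lambda_{{\bf p}, k+1}^{-1}}{\log(1 + (k+1)^{-1})} \;\le\; \frac{\lambda_{{\bf p}, k}^{-1}}{\log(1 + k^{-1})}
\]
holds for every ${\bf p} \in \Sigma_{n}$; and, taking reciprocals of the two strictly positive sides, this termwise inequality is exactly (\ref{eq: criterion CDC}). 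Running the same chain with every inequality reversed gives the concavity criterion, which completes the proof.

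The argument involves no real analytic difficulty; the two points to watch are the sign bookkeeping — the slopes $s_{k}$ are negative, so monotonicity of the curve's slopes reverses direction when one passes to the reciprocals $\lambda_{{\bf p},k}^{-1}$ and reverses once more upon taking reciprocals to reach the stated form — and the fact that passing from the per-chamber inequality to the averaged one uses both the positivity of the $\lambda_{{\bf p},k}$ (guaranteed by (\ref{eq: drift cond 2})) and the nonnegativity of the $\theta_{{\bf p}}$. It is worth remarking that the condition obtained is only sufficient: cancellations among the chambers ${\bf p}$ could make the averaged inequality hold even when some per-chamber inequalities fail.
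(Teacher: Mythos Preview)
Your argument is correct and follows exactly the route the paper indicates: the paper simply states that the criterion follows from the slope formula (\ref{eq: slope CDC}), and you have supplied the elementary details---piecewise-linear convexity as monotonicity of successive slopes, then the termwise sufficient condition obtained by positivity of the $\lambda_{{\bf p},k}$ and of the weights $\theta_{{\bf p}}$. The sign bookkeeping and the reciprocal step are handled correctly.
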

\begin{example}\label{ex: first order cnt}
Let us consider the first-order Atlas model which is a combination of
the ``Atlas configuration'' in Example \ref{ex: 1} with the further
restrictions of Example \ref{ex: first order}; to wit, $ g_{n} =
(n-1)g $, $ g_{1} = \cdots= g_{n-1} = -g <0 $ for some $ g>0
$, as well as $ \gamma_{i} = 0 $, $ \rho_{i, j} = 0 $, $ 1
\le i, j \le n $, and $ \sigma_{1}^{2} = \cdots= \sigma_{n}^{2} =
\sigma^{2} > 0 $ for some $ \sigma^{2} > 0 $. From Corollary \ref
{cor: CDC}, the expected capital distribution curve is \textit{convex} but
\textit{almost linear} for larger $ k $. Indeed, the quantity $
\lambda_{{\mathbf p}, k} \log(1+ k^{-1})= 2(g k / \sigma^{2}) \cdot
\log(1+ k^{-1}) $ increases in $ k \ge1 $, and converges to
$ 2g / \sigma^{2} $, as $ k \uparrow\infty$, for all $
{\mathbf p} \in\Sigma_{n} $, and so the difference in (\ref{eq:
criterion CDC}) is positive for each $ k = 1, \ldots, n-2 $ but
decreases to zero
quite rapidly in the order of $ O(k^{-2}) $, as $ k \uparrow
\infty$. Another explanation of such linearity (``Pareto line'') of
the capital distribution curves from an application of Poisson point
processes can be found in Example 5.1.1 on page 94 of \cite{F02}.
\end{example}
\begin{example} \label{ex: linear grow var}
Suppose now that we change only the rank-based variances in Example \ref
{ex: first order cnt}; namely, we take linearly growing variances $
\sigma_{k}^{2} = k\sigma^{2} $ for some $ \sigma^{2} > 0 $, $ k
= 1, \ldots, n $. Then
\[
\lambda_{{\mathbf p}, k} \log\biggl(1+\frac{1}{k}\biggr) =
\frac{4kg}{(2k+1)\sigma^{2}} \cdot\log\biggl(1+ \frac{1}{k}\biggr)
\]
is decreasing in $ k \ge1 $ for every $ {\mathbf p} \in\Sigma
_{n} $, and so the difference in (\ref{eq: criterion CDC}) is
negative for each $k = 1, \ldots, n-2 $. Thus, from Corollary \ref
{cor: CDC}, the expected capital distribution curve becomes \textit{concave}.
\end{example}
\begin{example}[(``Pure'' hybrid market conjecture)] \label{ex: pure
hybrid} A pure hybrid market
is one in which all the parameters are determined by the ``name'' of
the stock, with the exception of the growth rate of the smallest
stock. The log-capitalization $ Z_n(\cdot) $ of the smallest stock
has its growth rate incremented by $g>0$, as
in the Atlas model. Hence, this market will look like
\[
d Y_i(t) =
\cases{
-\gamma_i \,d t + \sigma_i \,d W_i(t) , &\quad if $Y_i(t) \ne Z_n
(t)$,\cr
(g-\gamma_i) \,d t + \sigma_i \,d W_i(t) , &\quad if $Y_i(t) = Z_n
(t)$,}
\]
for $i=1,\ldots,n$ and $t\in[0, \infty)$, where $\gamma_i > 0$, $\sigma
_i > 0$, and
$g=\sum_{i=1}^{n}\gamma_i$. We conjecture that the capital distribution
curve for
this market is convex.

This conjecture is based on the following reasoning: The Atlas stock
$ Z_{n}(\cdot) $
performs a role similar to a local time process, reflecting each stock
away from the bottom position. Hence, outside the set where $Y_i(\cdot) =
Z_n(\cdot)$, the distance $Y_i(\cdot)-Z_{n}(\cdot)$ will be
approximately exponentially
distributed. Accordingly, suppose we replace
$Y_i(\cdot)-Z_{n}(\cdot)$ by an exponentially distributed random variable
$ {\mathbf Z}_i$ with rate parameter $ \alpha_i = \sigma^2_i/(2\gamma
_i) $
\[
P\{{\mathbf Z}_i > x\} = e^{-\alpha_i x} ,\qquad x > 0 ,
i = 1, \ldots, n .
\]
Let $ {\mathbf Z} $ represent a generic member of such random variables
$ ({\mathbf Z}_i , i = 1, \ldots, n) $
as a mixed exponential distribution
\[
P\{{\mathbf Z} > x \} = \frac{1}{n}\sum_{i=1}^{n} e^{-\alpha_i x}
,\qquad x > 0 ,
\]
and define $ z_{(k)} $ as $ P\{{\mathbf Z} > z_{(k)}\} = k / n $ for
$ k = 1, \ldots, n $.
In this case, the capital distribution
curve is approximately proportional to the graph of $ z_{(k)} $
versus $\log k$, and this graph, $ \log k \mapsto z_{(k)} $, $ k =
1, \ldots, n $, will be convex on average.
In fact, the graph of
$
\log(k/n) = \log(\sum_{i=1}^{n} e^{-\alpha_i x}/n) ,
$
where $\log k$ is considered to be a function of $x$,
is convex, because with
$
\phi(x) := \sum_{i=1}^{n} e^{-\alpha_i x} ,
$
\[
\frac{d^2}{dx^2}\log k
= \frac{\phi''(x)\phi(x) - (\phi'(x))^2}{(\phi(x))^2}
= \frac{\sum_{i,j=1}^n (\alpha_i - \alpha_j)^2 e^{-(\alpha_i + \alpha
_j)x}}{2(\phi(x))^2} \ge0 .
\]
Note, of course, that this holds for the random variables
$ ({\mathbf Z}_i, i = 1, \ldots, n) $ and~${\mathbf Z}$, but that it
holds for the process $ Y_i(\cdot) $ is only a conjecture. This
conjecture is of interest because, historically, capital distribution
curves appear to be concave which could imply that rank-based
parameters as well as name-based parameters are needed to explain
stock market behavior.
\end{example}
\begin{example}\label{ex: ExpCDC2}
To see different shapes of the expected capital distribution curve
under different parameter configurations apart from Examples \ref{ex:
first order cnt} and \ref{ex: linear grow var}, let us consider a pure
hybrid market whose drift and volatility coefficients do not depend on
ranks, except for the smallest (Atlas) stock. For example, take $ n =
5000 $, $ g_{k} = 0 , 1\le k \le n-1 $, $ g_{n} =
c_{\ast}(2n-1) $, $ \gamma_{1} = - c_{\ast} $, $ \gamma_{i} =
-2c_{\ast} , 2 \le i \le n $, $ \sigma_{k}^{2} = 0.075
$, $ 1 \le k \le n $ and $ \rho_{i, j} = 0 $ for $ 1\le i, j
\le n $ with a parameter $ c_{\ast} = 0.02$. These parameters
satisfy the assumptions of Theorem \ref{prop: linear growing var}. We
cannot apply Corollary \ref{cor: CDC} because the difference in
(\ref{eq: criterion CDC}) is positive on $ \{ {\mathbf p} \in\Sigma_{n}
\dvtx {\mathbf p}(k+1) \neq1 \} $ but negative on its (smaller)
%
%
\begin{figure}

\includegraphics{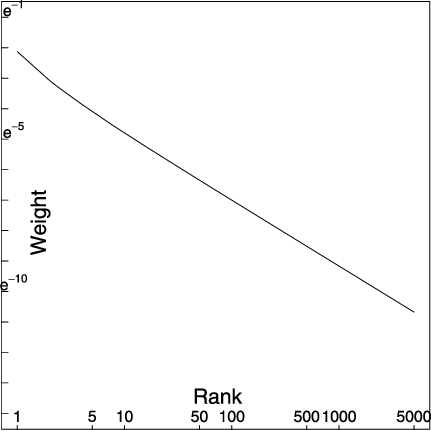}

\caption{Expected capital distribution curve for the pure hybrid model
in Example \protect\ref{ex: ExpCDC2}.} \label{fig: ExpCDC2}
\end{figure}
complement. The resulting expected capital distribution curve is
\textit{convex}; it is depicted in Figure \ref{fig: ExpCDC2}.
\end{example}
\begin{example}\label{ex: ExpCDC1}
Let us consider now a variant of this pure hybrid model, with a
variance structure that is observed in practice. The parameters are the
same as in Example \ref{ex: ExpCDC2}, except for the different choices
of the parameter $ c_{\ast} $ and for the rank-based variances $
\sigma_{k}^{2} := 0.075 + 6 k \times10^{-5} $ which are obtained
from the smoothed annualized values for 1990--1999 data as in Section
5.4, page 109 of \cite{F02} (see page 2319 of \cite{BFK05}). The
criterion from Corollary \ref{cor: CDC} cannot apply directly to this
case because the inequalities (\ref{eq: criterion CDC}) do not hold for
all $ {\mathbf p} \in\Sigma_{n} $. The expected capital distribution
curves under these parameters with (i) $ c_{\ast} = 0.02 $, (ii)
$ c_{\ast} = 0.03 $, (iii)~$ c_{\ast} = 0.04 $ are shown in
Figure \ref{fig: ExpCDC1}. The curve (i) is convex from the top rank to
%
%
\begin{figure}

\includegraphics{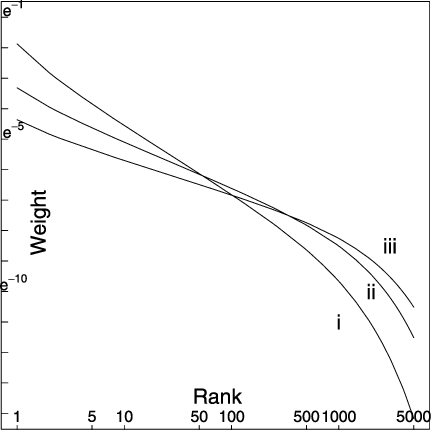}

\caption{Expected capital distribution curves for the hybrid model in
Example \protect\ref{ex: ExpCDC1}.}\vspace*{-3pt}
\label{fig: ExpCDC1}
\end{figure}
about the 25th rank, then turns concave until the lowest rank. The
other curves (ii) and (iii) behave similarly.
\end{example}
\begin{example} \label{ex: ExpCDC3}
Adopting the same parameter specifications in Example \ref{ex: ExpCDC1}(i)
$ c_{\ast} = 0.02 $, except the rank-based drift, that is, (iv)
the upwind first ranked stock $ g_{1} = -0.016 $, $ g_{k} = 0
, 2\le k \le n-1 $, $ g_{n} = (0.02)(2n-1)+0.016 $ and (v) the
windward top 50 stocks $ g_{1} = g_{2} = \cdots= g_{50} = -0.016
$, $ g_{k} = 0 , 51 \le k \le n-1 $, $ g_{n} = (0.02)(2n-1) +
0.8 $, we obtain concave curves as in Figure \ref{fig: ExpCDC3}. The
%
%
\begin{figure}[b]

\includegraphics{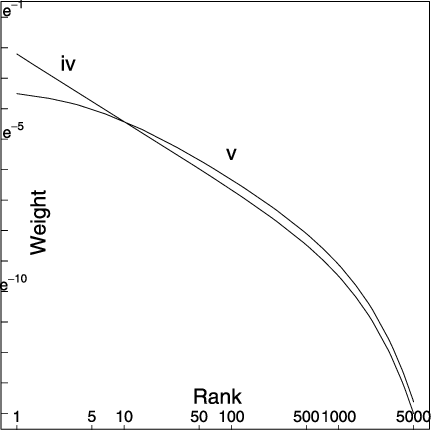}

\caption{Expected capital distribution curves for the hybrid model in
Example \protect\ref{ex: ExpCDC3}.}\vspace*{-3pt}
\label{fig: ExpCDC3}
\end{figure}
observed average curve and the estimated curve of the first-order Atlas
model for 1990--1999 (Figure 3 of \cite{BFK05}, page 2320) are concave.
The statistical inference for the capital distribution curves is an
interesting problem that we do not discuss here.
\end{example}

\section{Portfolio analysis} \label{sec: Portfolio Analysis}

Let us consider investing in the market of (\ref{eq: model}) according
to a portfolio rule $ \Pi(\cdot)= (\Pi_{1}(\cdot), \ldots, \Pi
_{n}(\cdot))^{\prime} $. This is an $ \{\F_{t}\}$-adapted, locally
square-integrable process with
$ \sum_{i=1}^{n} \Pi_{i}(\cdot) = 1 $. Each\vspace*{1pt} $ \Pi_{i}( t) $
represents the proportion of the portfolio's wealth $ V^{\Pi}( t) $
invested in stock $i $ at time $t$, so
%
%
\begin{equation} \label{eq: pflo rule}
\frac{d V^{\Pi}(t)}{V^{\Pi}(t)} = \sum_{i=1}^{n} \Pi_{i}(t) \cdot
\frac{d X_{i}(t)}{X_{i}(t)} ,\qquad V^{\Pi}(0) = w > 0 .
\end{equation}
For example, we may choose for every $ t \in[0,\infty) $ the vector
of market weights $ \mu_{i}( t) $, $ i = 1, \ldots, n $, as in
(\ref{RMW}). We shall call the resulting $ \Pi(\cdot) \equiv\mu
(\cdot) $ the \textit{market portfolio} and note $ V^{\mu}(\cdot) = w
X(\cdot)/X(0) $, thus from Proposition \ref{prop: long gr}: $ \lim
_{T\to\infty} (1 / T) \log V^{\mu}(T) \equiv\gamma$,
a.s.

For a constant-proportion portfolio $ \Pi(\cdot) \equiv\pi\in\Gamma
^{n}:= \{ (\pi_{1}, \ldots, \pi_{n})^{\prime} \in\R^{n} \vert\break
\sum_{i=1}^{n} \pi_{i}= 1 \} $ (which of course the market
portfolio is not), the solution of (\ref{eq: pflo rule}) is given by
%
%
\begin{equation}
d \log V^{\pi}(t) = \gamma_{\pi}^{\ast}(t) \,d t + \sum_{i=1}^{n}
\pi_{i} d \log X_{i}(t) ,\qquad 0 \le t < \infty.
\end{equation}
Here we shall denote by $ ( a_{ij}(t))_{1\le i,j \le n} =
S(Y(t)) S(Y(t))^{\prime} $ the covariance process from (\ref{eq:
model matrix}), and introduce
%
%
\begin{equation} \label{eq: excess grth}
\gamma_{\pi}^{\ast} (t):= \frac{1}{2} \Biggl( \sum_{i=1}^{n} \pi_{i}
a_{ii}(t) - \sum_{i,j=1}^{n} \pi_{i} a_{ij}(t) \pi_{j} \Biggr) ,\qquad
0 \le t < \infty,
\end{equation}
the \textit{excess growth rate} of the constant-proportion $ \Pi(\cdot)
\equiv\pi\in\Gamma^n $. Thus, for a constant-proportion portfolio we
can write the solution of (\ref{eq: pflo rule}), namely
%
%
\begin{equation} \label{eq: const pflo}
V^{\pi}(t) = w\cdot\exp\Biggl[ \sum_{i=1}^{n} \pi_{i} \biggl\{ \frac
{A_{ii}(t)}{2} + \log\biggl(\frac{X_{i}(t)}{X_{i}(0)} \biggr) \biggr\} - \frac
{1}{2} \sum_{i,j=1}^{n} \pi_{i} A_{ij} (t) \pi_{j}
\Biggr]\hspace*{-28pt}
\end{equation}
as in (2.4) of \cite{J92}, where $ A_{ij} (\cdot) = \int^{\cdot}_{0}
a_{ij}(t) \,d t $ ; we set $ A(\cdot) := (A_{ij}(\cdot))_{1\le i,
j\le n} $.

\subsection{Target portfolio}

Let us assume that, for every $ (t, \omega) \in[0, \infty) \times
\Omega$, there exists a vector $ \Pi^{\ast}(t, \omega) := (\Pi
_{1}^{\ast}( t, \omega), \ldots, \Pi_{n}^{\ast}(t, \omega))^{\prime}
\in\Gamma^n $ that attains the maximum of the wealth $ V^{\pi}( t,
\omega) $ over vectors $ \pi\in\Gamma^{n} $; and that the
resulting process $ \Pi^{\ast}(\cdot) $ defines a portfolio. Along
with Cover \cite{C91} and Jamshidian \cite{J92}, we shall call this $
\Pi^{\ast}(\cdot) $ a \textit{Target Portfolio}, and
%
%
\begin{equation}
\label{TARP}
V_{\ast} (t) := \max_{\pi\in\Gamma^{n}} V^{\pi}(t) ,\qquad 0
\le t <\infty,
\end{equation}
the \textit{Target Performance} for the model. [The quantity of (\ref
{TARP}) is not necessarily equal to the performance $ V^{\Pi^*} (\cdot
) $ of the portfolio $ \Pi^{\ast}$.]

The Target Performance $ V_{\ast} (\cdot) $ exceeds the performance
of the leading stock, of the value-line index (the geometric mean), and
of any arithmetic average (such as the Dow Jones Industrial Average):
to wit, taking $ X_{1}(0) = \cdots= X_{n}(0) = 1 $, we have for
every vector $ (\alpha_{1}, \ldots, \alpha_{n})^{\prime} \in\Gamma
_{+}^{n} := \{ (\pi_{1} , \ldots, \pi_{n})^{\prime} \in\Gamma
^{n}\vert\pi_{i} \ge0 , i = 1, \ldots, n\} $ the almost sure
comparisons
%
%
\begin{equation}
V_{\ast}(\cdot) \ge\max\Biggl[ \max_{1\le i \le n} X_{i}(\cdot)
, \Biggl(\prod_{j=1}^{n} X_{j}(\cdot)\Biggr)^{1 / n} ,
\sum_{j=1}^{n} \alpha_{j} X_{j}(\cdot) \Biggr] .
\end{equation}

Under the assumptions of Theorem \ref{prop: 1}, the limits $ \theta
_{{\mathbf p}} $ of the average occupation times in (\ref{eq: theta p
def}) exist almost surely, and so do the limits of the average
covariance rate $
\mfa_{ij}^{\infty} := \lim_{T\to\infty} A_{ij} (T) / T $; therefore,
$ \mfa^{\infty} := ( \mfa^{\infty}_{ij} )_{1\le i, j \le n}
$ is
%
%
\begin{eqnarray}
\label{eq: 6.6.a}
\mfa^{\infty} & = & \lim_{T\to\infty} \frac{1}{T} \int^{T}_{0} (
a_{ij}(t)) _{1\le i, j \le n} \,d t \nonumber\\[-8pt]\\[-8pt]
&=& \lim_{T\to\infty} \frac{1}{T} \int^{T}_{0} \sum_{{\mathbf p}\in
\Sigma
_{n}} {\mathbf1}_{\RR_{\mathbf p}}(Y(s)) \cdot\mfs_{\mathbf p} \mfs
_{\mathbf p}^{\prime} \,d t = \sum_{{\mathbf p} \in\Sigma_{n}} \theta
_{{\mathbf p}}
\mfs_{\mathbf p} \mfs_{\mathbf p}^{\prime} ,
\nonumber
\end{eqnarray}
with $ \mfs_{\mathbf p} $ defined in (\ref{eq: vol cond 1}). It
follows from (\ref{eq: const pflo}) and Proposition \ref{prop: long gr}
that the asymptotic
long-term-average growth rate of a constant-proportion portfolio $
\pi\in\Gamma^{n} $ is
%
%
\begin{equation} \label{eq: 6.7}\quad
\lim_{T\to\infty} \frac{1}{T} \log V^{\pi}(T) = \gamma+ \frac
{1}{2}\Biggl( \sum_{i=1}^{n}\pi_{i} \mfa^{\infty}_{ii} - \sum_{i,j=1}^{n}
\pi_{i} \mfa_{ij}^{\infty} \pi_{j} \Biggr) =: \gamma+ \gamma_{\pi
}^{\infty} .
\end{equation}
Maximizing this expression over $ \pi\in\Gamma^{n} $ amounts to
maximizing, over constant-proportion portfolios,
the excess growth rate
\[
\gamma_{\pi}^{\infty} = \frac{1}{2}
\Biggl( \sum_{i=1}^{n}\pi_{i} \mfa^{\infty}_{ii} -
\sum_{i,j=1}^{n} \pi_{i} \mfa_{ij}^{\infty} \pi_{j} \Biggr)
\]
that corresponds to the asymptotic covariance structure.

We shall call \textit{Asymptotic Target Portfolio} a vector $ \bar\pi=
(\bar\pi_{1}, \ldots, \bar\pi_{n})^{\prime} \in\Gamma^{n} $ that
attains $ \max_{\pi\in\Gamma^{n}} \gamma_{\pi}^{\infty} $. We can
regard this portfolio as \textit{asymptotic growth-optimal} over all
constant-proportion portfolios, in the sense that $
\lim_{T \rightarrow\infty} ( 1 / T)\times\log( V^{ \pi} (T) /
V^{\bar{\pi}} (T) ) \le0 $ holds a.s. for every $ \pi\in\Gamma
^n $.
\begin{example}
When there is no covariance structure by name, that is, $ \rho_{i,
j}\equiv0 $ for every $ 1\le i, j \le n $, we have $
A_{ij}(\cdot) \equiv0 $ for $ i \neq j $ in accordance with
(\ref{eq: vol cond 1}), (\ref{eq: model matrix}). In this case, we
compute a target portfolio $ \Pi^{\ast}(\cdot) $ as
%
%
\begin{eqnarray}\qquad
\Pi^{\ast}_{i}(t) &=& \Biggl( 2 A_{ii}(t) \sum_{j=1}^{n} \frac
{1}{A_{jj}(t)} \Biggr)^{-1} \Biggl[ 2 - n - 2 \sum_{j=1}^{n} \frac
{1}{A_{jj}(t)}\log\biggl( \frac{X_{j}(t)}{X_{j}(0)} \biggr) \Biggr]
\nonumber\\[-8pt]\\[-8pt]
&&{} + \frac{1}{2} + \frac{1}{A_{ii}(t)} \log\biggl( \frac
{X_{i}(t)}{X_{i}(0)} \biggr),\qquad
i = 1, \ldots, n ,\nonumber
\end{eqnarray}
and an asymptotic target portfolio by
%
%
\begin{equation} \label{eq: asy tar pflo}
\bar\pi_{i} = \frac{1}{2} \Biggl[ 1 - \frac{n-2}{\mfa_{ii}^{\infty
}} \Biggl( \sum_{j=1}^{n} \frac{1}{\mfa_{jj}^{\infty}} \Biggr)^{-1}
\Biggr] = \lim_{t \rightarrow\infty} \Pi^{\ast}_{i}(t),\qquad i = 1,
\ldots, n \mbox{, a.s.}\hspace*{-28pt}
\end{equation}

This constant portfolio
$ \bar{\pi} $ has exactly the same long-term growth rate as the
target performance in (\ref{TARP}), in particular
%
%
\begin{eqnarray} \label{eq: equiv target-asym}
\lim_{T \to\infty} \frac{1}{T} \log V_\ast(T) &=& \lim_{T\to\infty}\frac{1}{T}
\log V^{\bar{\pi}}(T)\nonumber
\\[-8pt]
\\[-8pt] &=& \gamma+ \sum_{i=1}^n
\frac{\mfa_{ii}^\infty}{2} \bar{\pi}_i (1 - \bar{\pi}_i)
\qquad\mbox{a.s.;}\hspace*{-30pt}
\nonumber
\end{eqnarray}
on the other hand, we see from (\ref{eq: 6.7}) that it outperforms the
overall market rather significantly over long time horizons, namely
%
%
\begin{eqnarray}
\lim_{T\to\infty} \frac{1}{T} \log\biggl( \frac{V^{\bar\pi
}(T)}{V^{\mu}(T)} \biggr) &=& \frac{1}{2}\sum_{i=1}^{n} \bar\pi_{i} ( 1-
\bar\pi_{i}) \mfa_{ii}^{\infty} \nonumber\\
&=& \frac{1}{8} \Biggl[ \sum_{i=1}^{n} \mfa_{ii}^{\infty} - (n-2)^{2}
\Biggl( \sum_{j=1}^{n} \frac{1}{\mfa_{jj}^{\infty}} \Biggr)^{-1} \Biggr] \\
&\ge&\frac
{n-1}{2} \Biggl( \sum_{i=1}^{n} \frac{1}{\mfa_{ii}^{\infty} }
\Biggr)^{-1}\nonumber
\end{eqnarray}
a.s., from the arithmetic mean--harmonic mean inequality.
\end{example}

With Cover \cite{C91} and Jamshidian \cite{J92}, we shall say that
stock $ i $ is \textit{asymptotically active}, if for the expression
of (\ref{eq: asy tar pflo}) we have $ \bar\pi_{i} > 0 $; and that
the entire \textit{market is asymptotically active}, if all its stocks are
asymptotically active, that is, if $ \bar\pi\in\Gamma_{++}^{n} :=
\{ (\pi_{1} , \ldots, \pi_{n})^{\prime} \in\Gamma^{n} \vert\pi
_{i} > 0 , i = 1, \ldots, n \} $.
\begin{example} \label{ex: active}
A sufficient condition for asymptotic activity of the model with $ n
\ge3 $ under the condition of Theorem \ref{prop: linear growing
var}, is obtained from (\ref{eq: asy tar pflo}) as
%
%
\begin{eqnarray} \label{eq: suf asym act 1}
\frac{1}{\mfa_{ii}^{\infty}} &<& \frac{1}{n-2} \Biggl( \sum_{\ell=1}^{n}
\frac{1}{\mfa_{\ell\ell}^{\infty}} \Biggr), \quad \mbox{or equivalently}
%
%
\\
\label{eq: suf asym act 2}
\Biggl( \sum_{{\mathbf p} \in\Sigma_{n}} \sigma_{{\mathbf p}^{-1}(i)}^{2}
\prod
_{j=1}^{n-1} \lambda_{\mathbf{ p}, j}^{-1}\Biggr)^{-1} &<& \frac{1}{n-2}
\Biggl[ \sum_{\ell=1}^{n} \Biggl( \sum_{{\mathbf p} \in\Sigma_{n}} \sigma_{{\mathbf
p}^{-1}(\ell)} ^{2} \prod_{j=1}^{n-1} \lambda_{{\mathbf p}, j}^{-1}
\Biggr)^{-1} \Biggr]\hspace*{-28pt}
\end{eqnarray}
for every $ i=1,\ldots, n $, with $ \lambda_{\mathbf{ p}, j} $
defined in (\ref{eq: lambda}); recall (\ref{eq: 6.6.a}), (\ref{eq:
theta p}) and (\ref{eq: vol cond 1}). This is the case in the constant
variance model $ \sigma_{1}^{2} = \cdots= \sigma_{n}^{2} $. In
general, it seems that the drift and volatility coefficients have
nontrivial effects on the condition (\ref{eq: suf asym act 2}).
\end{example}

\subsection{Universal portfolio}
The \textit{universal portfolio} of Cover \cite{C91} and Jamshidian
\cite
{J92} is defined as
\[
\widehat\Pi_{i}(t) := \frac{\int_{\Gamma_{+}^{n}} \pi_{i} V^{\pi
}(t) \,d \pi}{\int_{\Gamma_{+}^{n}} V^{\pi}(t) \,d \pi} ,\qquad
0 \le t < \infty, 1\le i \le n .
\]
It is constructed completely in terms of quantities, such as the $
V^{\pi}(\cdot) $ for constant-proportion portfolios $ \pi$, that
are observable: no model-specific knowledge is required for its
construction. As can be checked easily, the wealth process of this
portfolio is given by the ``performance-weighting''
\[
V^{\widehat\Pi}(t) = \frac{ \int_{\Gamma_{+}^{n}} V^{\pi} (t) \,d
\pi}{\int_{\Gamma_{+}^{n}} d \pi} ,\qquad 0 \le t < \infty
,
\]
yet another observable quantity. It follows from Theorem 2.4 of
Jamshidian \cite{J92} that the universal portfolio does not lag
significantly behind the target portfolio: its performance lag is only
polynomial in time under an asymptotically active model. To wit, there
exists then
a positive constant $ C $, such that
\[
\lim_{T\to\infty} \biggl( \frac{V^{\widehat\Pi}(T)}{V_{\ast}(T)} \cdot
T^{ (n-1) / 2} \biggr) = C
\]
holds almost surely, thus also
%
%
\begin{equation}
\label{eq: 6.13.c}
\lim_{T\to\infty} \frac{1}{T} \log\biggl( \frac{ V^{\widehat\Pi
}(T)}{V^{\bar\pi}(T)}\biggr) = \lim_{T\to\infty} \frac{1}{T} \log\biggl(
\frac{ V^{\widehat\Pi}(T)}{V_{\ast} (T)} \biggr) = 0 .
\end{equation}

In the context of the hybrid model, under the assumptions of Theorem
\ref{prop: linear growing var} and of Example \ref{ex: active}, the
universal portfolio attains the long-term growth rate of the target
portfolio $ \Pi^{\ast} $ and of the asymptotic target portfolio $
\bar\pi$. These are precisely the characteristics that make the
universal portfolio interesting: it is constructed based entirely on
quantities which are completely observable, yet its long-term
performance matches that of $ V_{\ast}(\cdot) $ in (\ref{TARP}),
and thus exceeds the performance of any constant-proportion portfolio.

\subsection{Growth-optimal portfolio}

We shall call \textit{growth-optimal} a portfolio $ \varpi(\cdot) $
that satisfies the inequality $
\lim_{T \rightarrow\infty} ( 1 / T) \log( V^{ \Pi} (T) /
V^{\varpi} (T) ) \le0 $ almost surely, for any portfolio $ \Pi
(\cdot) $.

In order to find such a growth-optimal portfolio under no-name based
correlation $ \rho_{i, j} \equiv0 $ for $ 1 \le i, j \le n $,
we need to maximize over $ \pi\in\Gamma^{n} $ the quantity
(growth rate)
%
%
\begin{equation} \label{eq: 6.13.a}
\Gamma(t; \pi) := \sum_{i=1}^{n} \biggl( \widetilde\gamma_{i}(t) + \frac
{1}{2}a_{ii}(t) \biggr) \pi_{i} - \frac{1}{2} \sum_{i=1}^{n} a_{ii}(t) \pi
_{i}^{2} ,
\end{equation}
where $ \widetilde\gamma_{i}(t)= \sum_{{\mathbf p} \in\Sigma_{n}}
{\mathbf1}_{\RR_{{\mathbf p}}}(Y(t)) g_{{\mathbf p}^{-1}(i)} + \gamma
_{i} +
\gamma$ is the $i$th element of $ G(Y(t)) $ of (\ref{eq: model
matrix}) (cf. Problem 4.6, page 108 in Fernholz and Karatzas \cite
{FK09}). By the Lagrange multiplier method, we obtain a vector that
attains this maximum, as
%
%
\begin{equation} \label{eq: 6.13.b}
\varpi_{i}(t) = \frac{1}{2} + \frac{\widetilde\gamma_{i}(t) + \overline
{\gamma} (t)}{a_{ii}(t)} ,\qquad i = 1, \ldots, n , 0 \le
t < \infty,
\end{equation}
where the constraint $ \sum_{i=1} ^{n} \varpi_{i}(t) = 1 $ is
enforced by the multiplier
\[
\overline{\gamma} (t) = \Biggl( \sum_{i=1}^{n} \frac{1}{a_{i i}(t)}
\Biggr)^{-1} \Biggl( 1 - \frac{n}{2} - \sum_{j=1}^{n} \frac{\widetilde
\gamma_{j}(t)}{a_{jj}(t)} \Biggr) .
\]
The growth rate $ \Gamma(t; \varpi) $ of this portfolio $ \varpi
(\cdot) $, in the notation of (\ref{eq: 6.13.a}), (\ref{eq: 6.13.b})
and using (\ref{eq: drift cond 1}), is
\[
\Gamma(t; \varpi) = \frac{n\gamma}{2} + \frac{1}{2} \sum_{i=1}^{n} \frac
{\widetilde\gamma_{i}^{2}(t)}{a_{ii}(t)} - \frac{ \overline{\gamma
}^{2}(t)} {2} \sum_{i=1}^{n} \frac{1}{a_{ii}(t)}+ \frac{1}{ 8 } \sum
_{i=1}^{n} a_{ii}(t) .
\]

$\bullet$
In order to make some comparisons, let us specialize to the
equal-variance case, that is, $ \sigma_{1}^{2} = \cdots= \sigma
_{n}^{2} = \sigma^{2} $ with no name-based correlations $ \rho_{i,
j} \equiv0 $; we obtain under these assumptions
the expression
%
%
\begin{equation} \label{eq: 6.13.d}
\varpi_{i}(t) = \frac{1}{n} + \frac{1}{\sigma^{2}} \Biggl( \gamma_{i}
+ \sum_{k=1}^{n} g_{k} {\mathbf1}_{Q^{(i)}_{k}}(Y(t)) \Biggr),\qquad
i = 1, \ldots, n
\end{equation}
for the growth-optimal portfolio, and
%
%
\begin{eqnarray} \label{eq: asy wealth 1}\qquad
\lim_{T\to\infty} \frac{1}{T} \log V^{\varpi}(T) &=& \lim_{T\to\infty}
\frac{1}{T} \int^{T}_{0} \Gamma(t; \varpi)\,d t \nonumber\\[-8pt]\\[-8pt]
&=&\gamma+ \frac{\sigma^{2}}{2} \biggl( 1 - \frac{1}{n} \biggr) + \frac
{1}{2\sigma^{2}}\Biggl( \sum_{k=1}^{n} g_{k}^{2} - \sum_{i=1}^{n} \gamma
_{i}^{2}\Biggr)
,\nonumber
\end{eqnarray}
and from (\ref{eq: occup time 1}), (\ref{eq: occup ident}) we obtain
\[
\lim_{T\to\infty} \frac{1}{T} \int^T_0 \varpi_i(t) \,d t =
\frac{1}{n} + \frac{1}{\sigma^2} \Biggl( \gamma_i +\sum_{k=1}^n g_k \theta
_{k,i} \Biggr) = \frac{1}{n} = \bar{\pi_i} ,\qquad
i = 1, \ldots, n ,
\]
almost surely. On the other hand, from (\ref{eq: 6.13.c}),
(\ref{eq: asy tar pflo}) and (\ref{eq: 6.7}) we see that the universal
portfolio $ \widehat{\Pi}(\cdot) $ and the asymptotic target
portfolio $ \overline{\pi}_{i} = \frac{1}{n} $, $ i = 1, \ldots,
n $, have the same long-term growth rate, namely
%
%
\begin{equation} \label{eq: asy wealth 2}
\lim_{T\to\infty} \frac{1}{T} \log V^{\bar\pi} (T) = \lim_{T\to\infty
} \frac{1}{T} \log V^{\widehat\Pi}(T) = \gamma+ \frac{\sigma^{2}}{2}
\biggl( 1 - \frac{1}{n} \biggr) .
\end{equation}
Under the conditions of (\ref{eq: drift cond 1}) and (\ref{eq: drift
cond 2}), we can verify
%
%
\begin{equation} \label{ineq: g 2 k}
\sum_{k=1}^{n} g_{k}^{2} > \sum_{i=1}^{n} \gamma_{i}^{2} .
\end{equation}
To show (\ref{ineq: g 2 k}), we may assume without loss of generality
$ \gamma_{1} \ge\cdots\ge\gamma_{n} $ and hence that there
exists $ (\delta_{1}, \ldots, \delta_{n-1})^{\prime} \in(\R
_{+})^{n-1} \setminus\{0\} $ such that $ g_{k} = - (\gamma_{k} +
\delta_{k}) $ for $ k = 1, \ldots, n-1 $, and $ g_{n} = -
\gamma_{n} + (\delta_{1} + \cdots+ \delta_{n-1}) $ for (\ref{eq:
drift cond 1}) and (\ref{eq: drift cond 2}). Then we obtain
\begin{eqnarray*}
\sum_{k=1}^{n} g_{k}^{2} & = & \sum_{i=1}^{n-1} (\gamma_{i} + \delta
_{i})^{2} + \bigl(-\gamma_{n} + (\delta_{1} + \cdots+ \delta_{n-1})
\bigr)^{2} \\
& = & \sum_{i=1}^{n} \gamma_{i}^{2} + \sum_{i=1}^{n-1}
\bigl( \delta_{i}^{2} + 2 \delta_{i} (\gamma_{i} - \gamma_{n} ) \bigr)
+ \Biggl( \sum_{i=1}^{n-1} \delta_{i} \Biggr)^2
> \sum_{i=1}^{n} \gamma_{i}^{2} .
\end{eqnarray*}

We observe from (\ref{eq: 6.13.d})--(\ref{ineq: g 2 k})
that the growth-optimal portfolio $ \varpi(\cdot)
$ dominates in the long run both the universal portfolio $ \widehat
\Pi(\cdot) $ and the asymptotic target portfolio $ \bar\pi$,
a.s. The advantage of the universal portfolio is that it can be
constructed with total oblivion as to what the actual values of the
parameters of the model might be; some of these may be quite hard to
estimate in practice. By contrast, constructing the growth-optimal
portfolio $ \varpi(\cdot) $ as in (\ref{eq: 6.13.d}) requires
knowledge of all the model parameters, and keeping track of the
positions of all stocks in all ranks at all times.

\begin{appendix}\label{sec: Appendix}
\section*{Appendix}
\subsection{\texorpdfstring{Preparations for the proof of Lemma \protect\ref{lm: for
prop 2}}{Preparations for the proof of Lemma 1.}} \label{app: prf of lm for prop 2}
The stochastic exponential
\[
\zeta(t) = \exp\biggl[ - \int^{t}_{0} \langle\xi(u) , d
W(u)\rangle- \frac{1}{2} \int^{t}_{0} \lVert\xi(u) \rVert^{2} \,d
u \biggr],\qquad 0 \le t < \infty,
\]
is a continuous martingale, where $ \xi(t) := S^{-1}(Y(t)) G(Y(t))
$ for $ 0 \le t < \infty$ and $ \lVert x \rVert^{2} := \sum
_{j=1}^{n}x_{j}^{2} , x \in\R^{n} $, and $ \langle x, y
\rangle= \sum_{j=1}^{n} x_{j} y_{j} , x, y \in\R^{n} $. Recall
that $ S(\cdot) $, $ S^{-1}(\cdot) $ and $ G(\cdot) $ in
(\ref{eq: model matrix}) are bounded. By Girsanov's theorem
\[
\widetilde W(t):= W(t) + \int^{t}_{0} S^{-1} (Y(u)) G (Y(u)) \,d u
,\qquad 0 \le t < \infty,
\]
is an $n$-dimensional Brownian motion under the new probability measure
$ \QQ$, locally equivalent to $ \PP$, that satisfies
%
%
\setcounter{equation}{0}
\begin{equation} \label{eq: def meas Q}
\QQ(C) = \E^{\PP}(\zeta(T) {\mathbf1}_{C}) ,\qquad C \in\F_{T} , 0
\le T < \infty.
\end{equation}
Thus, equation (\ref{eq: model matrix}) under $ \PP$ is reduced to
%
%
\begin{equation} \label{eq: model chg meas}
d Y(t) = S(Y(t)) \,d \widetilde W(t) ,\qquad 0 \le t < T \mbox{,
under } \QQ.
\end{equation}

\subsubsection{Local time of Bessel processes}

Let us denote the $ \delta$-dimensional Bessel process by $
{\mathfrak r}^{(\delta)}(\cdot) $ for $ \delta> 1 $
\[
\mathfrak r^{(\delta)}(t) = \mathfrak r^{(\delta)}(0) + \int^t_0
\frac{\delta- 1}{2{\mathfrak r^{(\delta)} (s)}} \,d s + \widetilde
{B}(t) ,\qquad
0 \le t < \infty,
\]
where $ \widetilde{B}(\cdot) $ is the standard Brownian motion.
Since it is a continuous semimartingale, there is a modification $
\Lambda_{{\mathfrak r}^{(\delta)}}(\cdot) $ of its local time
accumulated at the origin, defined by
\[
\Lambda_{\mathfrak r^{(\delta)}}(t) = \frac{1}{2} \biggl( \mathfrak
r^{(\delta)}(t) - \mathfrak r^{(\delta)}(0) - \int^t_0
\operatorname{sgn}\bigl(\mathfrak r^{(\delta)}(s)\bigr) \,d \mathfrak{r}^{(\delta)}(s)
\biggr),\qquad
0 \le t < \infty,
\]
where the function $\operatorname{sgn}$ is defined by $\operatorname{sgn}(x) =
1 $ if $ x > 0 $ and $\operatorname{sgn}(x) = -1 $ if $ x \le0 $.
When $ \delta\ge2 $, $ \mfr^{(\delta)}(\cdot) $ never hits
the origin, and its local time at the origin is identically equal to
zero. Thus let us consider the case $ 1 < \delta<2 $. By the
occupation times formula and the right continuity of the semimartingale
local time, we obtain
%
%
\begin{equation} \label{eq: BES local time in zero}
\Lambda_{\mathfrak r^{(\delta)}}(t) = \lim_{\varepsilon\downarrow0}
\frac{1}{2\varepsilon} \int^t_0 {\mathbf1}_{\{0 \le\mathfrak{r}
^{(\delta)}(s) \le\varepsilon\}} \,d s \qquad\mbox{almost surely for }
0 \le t < \infty.\hspace*{-28pt}
\end{equation}
On the other hand, it can be shown from
Lemma 3.1 and equation (3f) of
Biane and Yor \cite{BY87}, and also form
pages 285--289 of Rogers and Williams \cite{RW00}
that there exists a finite limit
%
%
\begin{equation} \label{eq: BESLoc 2}
\lim_{\varepsilon\downarrow0} \frac{1}{2\varepsilon^\delta} \int^t_0
{\mathbf1}_{\{0 \le\mathfrak{r}^{(\delta)}(s) \le\varepsilon\}}\,d s
\qquad\mbox{almost surely for } 0 \le t < \infty
\end{equation}
[see (\ref{eq: BESLoc 8}) below]. Combining this fact with (\ref{eq:
BES local time in zero}), there is no accumulation of local time at the
origin for the case $ 1 < \delta< 2 $. Therefore, we conclude that
the local time $ \Lambda_{\mfr^{(\delta)}}(\cdot) $of the $
\delta$-dimensional Bessel process $ \mfr^{(\delta)}(\cdot) $
accumulated at the origin is \textit{identically equal to zero},
%
%
\begin{equation} \label{eq: no lt for BES(3)}
\Lambda_{\mathfrak r^{(\delta)}}(t) \equiv0,\qquad 0 \le t < \infty
,
\delta> 1 .
\end{equation}
\begin{pf*}{Proof of (\ref{eq: BESLoc 2}) (\textup{Abridged from \cite{BY87,RW00}})}
Given the $ \delta$-dimensional Bessel processes $ \mfr^{(\delta
)}(\cdot) $, there is a one-dimensional Bessel process
$ \mfr^{(1)}(\cdot) $ which starts at
$ \mfr^{(1)}(0) = (2-\delta)^{-(2-\delta)} (\mfr^{(\delta
)}(0))^{2-\delta} $ and
satisfies the following pathwise relation:
%
%
\begin{eqnarray} \label{eq: BESLoc 3}
\mfr^{(\delta)}(t) &=& (2-\delta) \bigl(\mfr^{(1)}(A_{t})\bigr)^{{1}/({2-\delta
})} ,\qquad
A_{t} := \inf\{s \ge0\dvtx C_{s} \ge t\} , \nonumber\\[-8pt]\\[-8pt]
C_{t} :\!&=& \int^{t}_{0} \bigl(\mfr^{(1)}(s) \bigr)^{({2\delta-2})/({2-\delta})}
\,d s ,\qquad
0 \le t < \infty.\nonumber
\end{eqnarray}
(This time-change formula is obtained with the parameters $ \nu=
-1/2 $, $ q = 2-\delta$, $ p = \frac{2-\delta}{1-\delta} $,
$ -\frac{2}{p} = \frac{2\delta- 2}{2-\delta} > 0 $ in Proposition
XI.1.11 of \cite{RY99}, which is originally from Lemma 3.1 of \cite
{BY87}. The index $ \nu= \frac{1}{2} -1 $ corresponds to the
one-dimensional Bessel process and the index $ \nu q = \frac{\delta
}{2} - 1 $ corresponds to the $ \delta$-dimensional Bessel process.)
The stochastic clocks $ C_\cdot$ and $ A_{\cdot} $ in (\ref{eq:
BESLoc 3}) do not explode in a finite time because of the instantaneous
reflection of $ \mfr^{(1)}(\cdot) $. Substituting this relation, we
compute the occupation time
%
%
\begin{eqnarray} \label{eq: BESLoc 4}
\int^{t}_{0} {\mathbf1}_{\{0 \le\mfr^{(\delta)}(s) \le\varepsilon\}} \,d
s &=& \int^{t}_{0}
{\mathbf1}_{\{0 \le(2-\delta)(\mfr^{(1)}(A_{s}))^{{1}/({2-\delta})}
\le
\varepsilon\}} \,d s\nonumber\\[-8pt]\\[-8pt]
&=& \int^{A_{t}}_{0}
{\mathbf1}_{\{0 \le(2-\delta)(\mfr^{(1)}(s))^{{1}/({2-\delta})} \le
\varepsilon\}} \,d C_{s}
.\nonumber
\end{eqnarray}
It follows from (\ref{eq: BESLoc 3}) that $ \frac{d C_{t}}{d t} =
(\mfr^{(1)}(t) )^{({2\delta-2})/({2-\delta})} $ and hence the
right-hand side of (\ref{eq: BESLoc 4}) becomes
\[
\int^{A_{t}}_{0}
{\mathbf1}_{\{0 \le(2-\delta)(\mfr^{(1)}(s))^{{1}/({2-\delta})} \le
\varepsilon\}} \cdot
\bigl(\mfr^{(1)}(s) \bigr)^{({2\delta-2})/({2-\delta})} \,d s,\qquad
0 \le t < \infty.
\]

By the occupation time formula for the one-dimensional Bessel process
$ \mfr^{(1)}(\cdot) $, this expression becomes
\[
2 \int
_{(0, \eta)}
y^{({2\delta-2})/({2 - \delta})}
\Lambda^{\mfr^{(1)}}_{A_{t}}(y) \,d y ,
\]
where $ \eta:= (\frac{\varepsilon}{2-\delta})^{2-\delta} $ and $
\Lambda^{\mfr^{(1)}}_t (y) $
is the local time
accumulated by $ \mfr^{(1)}(\cdot) $
at the level $ y \in[0 , \infty) $ over
the time interval $ [0, t] $.
Changing the variable from $ y $ to $ x = (2-\delta)y^{
{1}/({2-\delta})} $
with
$ d y = \frac{x^{1-\delta}}{(2-\delta)^{1-\delta}} \,d x $, we obtain
\begin{eqnarray*}
&&\int^{t}_{0} {\mathbf1}_{\{0 \le\mfr^{(\delta)}(s) \le\varepsilon\}}
\,d s \\
&&\qquad= 2 \int^{\infty}_{0} {\mathbf1}_{\{0 \le x \le\varepsilon\}} \cdot
\frac{x^{2\delta-2}}{(2-\delta)^{2\delta-2}} \cdot
\frac{x^{1-\delta}}{(2-\delta)^{1-\delta}} \cdot\Lambda^{\mfr^{(1)}}_{A_{t}}
\biggl(\frac{x^{2-\delta}}{(2-\delta)^{2-\delta}}
\biggr)\,
d x \\
&&\qquad= 2 \int^{\varepsilon}_{0}
\frac{x^{\delta-1}}{(2-\delta)^{\delta-1}} \cdot\Lambda^{\mfr^{(1)}}_{A_{t}}
\biggl(\frac{x^{2-\delta}}{(2-\delta)^{2-\delta}}
\biggr)\,
d x, \qquad 0 \le t < \infty.
\end{eqnarray*}

Now by $ A_t < \infty$, $ 0 \le t <\infty$, and
by the right continuity of $ y \mapsto
\Lambda^{\mfr^{(1)}}_\cdot(y) $, we obtain
%
%
\begin{equation} \label{eq: BESLoc 8}
\Lambda^{\mfr^{(1)}}_{A_{t}}(0) = \lim_{\varepsilon\downarrow0}
\frac{\delta(2-\delta)^{\delta-1}}{2 \varepsilon^{\delta}}
\int^{t}_{0} {\mathbf1}_{\{0 \le\mfr^{(\delta)}(s) \le\varepsilon\}} \,d
s <\infty,\qquad 0 \le t < \infty.\hspace*{-28pt}
\end{equation}
Therefore, we conclude that (\ref{eq: BESLoc 2}) holds for $ 1<
\delta< 2 $.
\end{pf*}

\subsubsection{Comparisons with Bessel processes}
Now let us fix integers $ 1 \le i < j < k \le n $. Under $ \QQ
$ in (\ref{eq: def meas Q}) we shall compare the rank gap process
\[
\eta(t):= \max_{\ell=i, j, k} Y_{\ell}(t) - \min_{m=i,j,k} Y_{m}(t)
\]
with a Bessel process of dimension $ \delta> 1 $, using Lemmata
\ref{lm: comparison 2} and \ref{lm: comparison 1} below.

We introduce the function $ g(y) := [ (y_{i} - y_{j})^{2} + (y_{j} -
y_{k})^{2} + (y_{k}-y_{i})^{2}]^{1/2} $ for $ y \in\R^{n} $ and
note the comparison $ \sqrt{3 } \eta(\cdot) \ge g ( Y(\cdot)) $.
An application of It\^o's rule to $ g(Y(\cdot)) $ yields the
semimartingale decomposition
%
%
\begin{equation}
d g(Y(t)) = h(Y(t)) \,d t + d \Theta(t) ,\qquad
0 \le t < \infty,
\end{equation}
where we introduce the $ (n\times3) $ matrix $ D_{ijk} := (
d_{i} , d_{j} , d_{k} ) $ with $ (n\times1) $ vectors $
d_{i} := \mfe_{i} - \mfe_{j} $, $ d_{j} := \mfe_{j} - \mfe_{k} $,
$ d_{k} := \mfe_{k} - \mfe_{i} $, we denote by $ \mfe_{i} $,
$ i = 1, \ldots, n $, the $i$th unit vector in $ \mathbb{R}^n $, and
%
%
\begin{eqnarray} \label{eq: def sem decomp}\qquad
h(y) :\!&=& \frac{(R(y) - 1) Q(y)}{2 g(y)} ,\qquad R(y) := \frac
{\operatorname{Tr}(D_{ijk}^{\prime} S(y)S^{\prime}(y) D_{ijk})}{Q(y)} ,
\nonumber\\
Q(y) :\!&=& \frac{y^{\prime} D_{ijk} D_{ijk}^{\prime} S(y)S(y)^{\prime}
D_{ijk} D_{ijk}^{\prime} y}{y^{\prime} D_{ijk} D_{ijk}^{\prime} y}
,\qquad
y \in\R^{n} \setminus\mathcal Z , \nonumber\\
\mathcal Z :\!&=& \{ y \in\R^{n} \vert g(y) = ( y^{\prime}
D_{ijk} D_{ijk}^{\prime} y ) = 0 \} , \\
\Theta(t) :\!&=& \int^{t}_{0} \biggl( \sum_{\ell=i, j, k} \frac{S^{\prime
}(y) d_{\ell} d_{\ell}^{\prime} y}{g(y)} \bigg\vert_{y = Y(s)} \biggr)
\,d \widetilde W(s) , \nonumber\\
\langle\Theta\rangle(t) &=& \int^{t}_{0} Q(Y(s)) \,d s,\qquad
0\le t < \infty
.\nonumber
\end{eqnarray}
Here note that under the assumption on (\ref{eq: vol cond 1}), and
because $ 3 D_{ijk} D_{ijk}^\prime= D_{ijk}\times D_{ijk}^\prime D_{ijk}
D_{ijk}^\prime$, we have
%
%
\begin{equation} \label{ineq: Q}\qquad
Q(\cdot) = \frac{3 y^{\prime} D_{ijk} D_{ijk}^{\prime} S(\cdot
)S(\cdot)^{\prime} D_{ijk} D_{ijk}^{\prime} y}{y^{\prime} D_{ijk}
D_{ijk}^{\prime}D_{ijk} D_{ijk}^{\prime} y} \ge3\min_{{\mathbf p} \in
\Sigma_n} \min_{\ell=1,\ldots, n} {\tilde\lambda}_{\ell, {\mathbf p}} >0
\end{equation}
in $ \R^n \setminus\mathcal Z $, where $ {\tilde\lambda}_{\ell,
{\mathbf p}} , \ell= 1, \ldots, n $, are the eigenvalues of the
positive-definite matrices $ \mfs_{\mathbf p} \mfs_{\mathbf p}^\prime$ for
$ {\mathbf p} \in\Sigma_n $, and so $ \langle\Theta\rangle(\cdot)
$ is strictly increasing when $ Y(\cdot) \in\R^n \setminus
\mathcal Z $.
Now define the stopping time $ \tau_{u} := \inf\{ t \ge0 \vert
\langle\Theta\rangle(t)\ge u \} $, and note
\[
\mfG(u) := g(Y(\tau_{u})) = g(Y(0)) + \int^{\tau_{u}}_{0} h(Y(t)) \,d
t + \widetilde B(u) ,\qquad 0 \le u < \infty,
\]
where $ \widetilde B(u) := \Theta(\tau_{u}) $, $ 0 \le u < \infty
$, is a standard Brownian motion, by the Dambis--Dubins--Schwarz
theorem of time-change for martingales.
Note that $ 1 / [Q(Y(\tau_u))] = d \tau_u / d u $,
when $ Y (\tau_u) \in\R^n \setminus\mathcal Z $.
Thus, with $ \mfd(u):= R(Y(\tau_{u})) $, we can write
\[
d \mfG(u) = \frac{\mfd(u) - 1}{2 \mfG(u)} \,d u + d \widetilde
B(u),\qquad 0 \le u < \infty, \mfG(0) = g(Y(0)) .
\]

The dynamics of the process $ \mfG(\cdot) $ are comparable to those
of a Bessel process $ \mfr^{(\delta)}(\cdot) $ with dimension $
\delta$, generated by the same $ \widetilde B(\cdot) $ and started
at the same initial point $ g(Y(0)) $. Since $ S(\cdot)S(\cdot
)^{\prime} $ is positive definite under (\ref{eq: vol cond 1}) and
$ \mbox{rank }(D_{ijk}) = 2 $, the $ (3\times3) $ matrix $
D_{ijk}^{\prime} S(\cdot) S(\cdot)^{\prime}D_{ijk} $ is nonnegative
definite and the number of its nonzero eigenvalues is equal to $
\mbox{ rank}(D_{ijk}^{\prime}S(\cdot)S(\cdot)^{\prime} D_{ijk}) = 2
$. Let us denote by $ \bar\lambda_{\ell, {\mathbf p}} $, $ \ell= 1,
2, 3 $, the eigenvalues of
$ D_{ijk}^{\prime} \mfs_{{\mathbf p}}\mfs_{{\mathbf p}^{\prime}}
D_{ijk} $
for $ {\mathbf p} \in\Sigma_n $.
Then for $ R(\cdot) $ in (\ref{eq: def sem decomp}) we obtain
%
%
\begin{equation}
\label{eq: def delta 0}
R(\cdot) \ge\delta_{0} := \min_{{\mathbf p} \in\Sigma_{n}} \biggl( \frac
{\sum_{\ell=1}^{3} \bar\lambda_{\ell, {\mathbf p}} }{\max_{1\le\ell\le
3} \bar\lambda_{\ell, {\mathbf p}}} \biggr) > 1 \qquad\mbox{in } \R
^{n} \setminus\mathcal Z ,
\end{equation}
and so $ \mfd(\cdot) \ge\delta_{0} > 1 $ when $ Y(\tau_{\cdot})
\in\R^{n} \setminus\mathcal Z $. By a comparison argument similar
to that in the proof of Lemma 2.1 of \cite{IK09}, we may show that $
\mfG(t) \ge\mfr^{(\delta_{0})}(t) $ for $ 0 \le t < \infty$
a.s. Since $ \sqrt{3}\eta(t) \ge g (Y(t)) =\mfG(\langle\Theta
\rangle(t)) $ implies $ \sqrt{3} \eta(t) \ge\mfr^{(\delta_{0})}
(\langle\Theta\rangle(t))
$ for $ 0 \le t < \infty$, a.s., we obtain the following result.
\begin{lm} \label{lm: comparison 2} For the process $ Y(\cdot) $ of
(\ref{eq: model chg meas}) with (\ref{eq: vol cond 1}), the
multiple $ \sqrt{3} \eta(\cdot) $ of the rank-gap process
dominates, a.s. under $ \QQ$, a time-changed Bessel process $
\tilde\mfr(\cdot) := \mfr^{(\delta_0)}(\langle\Theta\rangle(\cdot
)) $ with dimension $ \delta_{0} $ as in (\ref{eq: def delta 0})
\[
\QQ\bigl( \sqrt{3} \eta(t) \ge\tilde\mfr(t) , 0 \le t <
\infty\bigr) =1 .
\]
\end{lm}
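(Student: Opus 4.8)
The plan is to carry out all the analysis under the equivalent measure $\QQ$ of (\ref{eq: def meas Q}), where by (\ref{eq: model chg meas}) the log-capitalizations $Y(\cdot)$ evolve as a driftless It\^o process. Since $\sqrt{3}\,\eta(\cdot)\ge g(Y(\cdot))$, it suffices to bound $g(Y(\cdot))$ from below by the advertised time-changed Bessel process. First I would record the It\^o decomposition $d\,g(Y(t))=h(Y(t))\,d\,t+d\,\Theta(t)$ with $h$, $\Theta$, $Q$, $R$ as in (\ref{eq: def sem decomp}). The only delicate point here is that these coefficients are singular on the set $\mathcal Z=\{y_i=y_j=y_k\}$; but since the Lebesgue measure of $\{t:Y_i(t)=Y_j(t)\}$ vanishes a.s. (the Krylov estimate already invoked in the discussion following (\ref{eq: rank proc 2})), the process $Y(\cdot)$ spends zero time in $\mathcal Z$, so the decomposition and the quantities $Q(Y(\cdot))$, $R(Y(\cdot))$ are well defined a.s.

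Next I would time-change. By (\ref{ineq: Q}) the quadratic variation $\langle\Theta\rangle(t)=\int_0^t Q(Y(s))\,d\,s$ is bounded below by a strictly positive multiple of $t$ off $\mathcal Z$, hence is a continuous, strictly increasing bijection of $[0,\infty)$, so its inverse $\tau_u=\inf\{t\ge 0:\langle\Theta\rangle(t)\ge u\}$ is continuous and $\widetilde B(u):=\Theta(\tau_u)$ is a standard Brownian motion by the Dambis--Dubins--Schwarz theorem. Then $\mfG(u):=g(Y(\tau_u))$ satisfies
\begin{equation*}
d\,\mfG(u)=\frac{\mfd(u)-1}{2\,\mfG(u)}\,d\,u+d\,\widetilde B(u)\,,\qquad \mfG(0)=g(Y(0))\,,
\end{equation*}
with $\mfd(u):=R(Y(\tau_u))$. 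The r\^ole of (\ref{eq: def delta 0}) is precisely to guarantee $\mfd(\cdot)\ge\delta_0>1$ while $Y(\tau_\cdot)\notin\mathcal Z$, so that $\mfG(\cdot)$ is a time-inhomogeneous diffusion whose ``instantaneous dimension'' dominates that of a genuine $\delta_0$-dimensional Bessel process $\mfr^{\delta_0}(\cdot)$ driven by the same $\widetilde B(\cdot)$ and started at the same point $g(Y(0))$.

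The hard part will be the pathwise comparison $\mfG(t)\ge\mfr^{\delta_0}(t)$ itself: the drift $(\mfd-1)/(2\mfG)$ is singular at the origin and $\mfd$ is merely bounded below rather than constant, so the classical Yamada--Watanabe comparison does not apply off the shelf. I would instead follow the localized argument in the proof of Lemma~2.1 of \cite{IK09}: away from the origin the drift coefficient is locally Lipschitz, so the comparison propagates along each excursion interval of $\mfG$ away from $0$; and the behaviour at the origin is harmless because $\delta_0>1$ prevents either process from accumulating local time there, by (\ref{eq: no lt for BES(3)}) (a consequence of Proposition~XI.1.5 of \cite{RY99}), so the inequality cannot be broken when the processes touch $0$. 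A standard patching/limiting argument across excursions then yields $\mfG(t)\ge\mfr^{\delta_0}(t)$ for all $t\ge 0$, $\QQ$-a.s.

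Finally, undoing the time-change gives $\sqrt{3}\,\eta(t)\ge g(Y(t))=\mfG(\langle\Theta\rangle(t))\ge\mfr^{\delta_0}(\langle\Theta\rangle(t))=\tilde\mfr(t)$ for all $0\le t<\infty$, $\QQ$-a.s., which is exactly the assertion of the lemma. I expect the comparison step of the third paragraph to be the only substantive obstacle; everything else is It\^o calculus, a time-change, and bookkeeping with the positive-definiteness hypothesis (\ref{eq: vol cond 1}).
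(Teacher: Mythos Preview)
Your proposal is correct and follows essentially the same route as the paper: the elementary bound $\sqrt{3}\,\eta(\cdot)\ge g(Y(\cdot))$, the It\^o decomposition (\ref{eq: def sem decomp}) of $g(Y(\cdot))$, the DDS time-change via $\tau_u$ to obtain the Bessel-type SDE for $\mfG(\cdot)$ with random dimension $\mfd(\cdot)\ge\delta_0>1$, and then the comparison with a genuine $\delta_0$-dimensional Bessel process by appeal to Lemma~2.1 of \cite{IK09}. Your added remarks on the negligibility of $\mathcal Z$ and on why the comparison survives passage through the origin are welcome elaborations, but they do not depart from the paper's argument.
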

\begin{lm} \label{lm: comparison 1} Under $ \QQ$, the rank-gap
process $ \eta(\cdot) $
satisfies $ \langle\eta\rangle(t) \le c_{1} t $, $ 0 \le t
< \infty$ a.s. for some constant $ c_{1} > 0 $ and the local
time $ \Lambda_{\eta}(\cdot) $ of $ \eta(\cdot) $ at the origin
is identically equal to zero, that is, $ \Lambda_\eta(\cdot) \equiv
0 $, a.s.
\end{lm}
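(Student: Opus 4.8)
The plan is to regard $\eta(\cdot)=\max_{\ell\in\{i,j,k\}}Y_\ell(\cdot)-\min_{\ell\in\{i,j,k\}}Y_\ell(\cdot)$ as a one--dimensional nonnegative continuous semimartingale under $\QQ$ (it is a difference of a max and a min of continuous semimartingales), and to prove the two assertions separately: first the bound on $\langle\eta\rangle$ by a direct computation of the local--martingale part of $\eta$, then the vanishing of its local time at the origin by feeding in the Bessel comparison of Lemma~\ref{lm: comparison 2}.

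For the quadratic variation I would write $M(\cdot):=\max_{\ell\in\{i,j,k\}}Y_\ell(\cdot)$, $m(\cdot):=\min_{\ell\in\{i,j,k\}}Y_\ell(\cdot)$, and note that, since $dY=S(Y)\,d\widetilde W$ under $\QQ$ by (\ref{eq: model chg meas}), the local--martingale part of $M$ is $\sum_{\ell}\int\mathbf 1_{\{Y_\ell=M\}}\,(S(Y)\,d\widetilde W)_\ell$ and that of $m$ is analogous, with monotone finite--variation corrections coming from crossings of the three coordinates. By the Krylov estimate and Fubini argument recalled after (\ref{eq: rank proc 2}), the set of times at which two of $Y_i,Y_j,Y_k$ coincide has zero Lebesgue measure, so at almost every time there is a unique maximizing index $a$ and (whenever $\eta>0$) a unique minimizing index $b\neq a$, and the local--martingale part $N(\cdot)$ of $\eta(\cdot)$ satisfies $d\langle N\rangle(t)=(\mfe_{a}-\mfe_{b})^{\prime}S(Y(t))S(Y(t))^{\prime}(\mfe_{a}-\mfe_{b})\,dt$ a.e. Since $\lVert\mfe_{a}-\mfe_{b}\rVert^{2}=2$ and the matrices $\mfs_{\bf p}\mfs_{\bf p}^{\prime}$ have largest eigenvalue bounded uniformly over the finitely many ${\bf p}\in\Sigma_n$, this is at most $c_{1}\,dt$ with $c_{1}:=2\max_{{\bf p}\in\Sigma_n}\lambda_{\max}(\mfs_{\bf p}\mfs_{\bf p}^{\prime})$, whence $\langle\eta\rangle(t)=\langle N\rangle(t)\le c_{1}t$, the finite--variation part of $\eta$ contributing nothing to the quadratic variation.

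For the local time I would use the occupation--density representation $\Lambda_\eta(t)=\lim_{\varepsilon\downarrow0}\tfrac1{2\varepsilon}\int_0^t\mathbf 1_{\{0\le\eta(s)<\varepsilon\}}\,d\langle\eta\rangle(s)$. From the previous step $d\langle\eta\rangle(s)=\psi(s)\,ds$ with $0\le\psi\le c_{1}$, and Lemma~\ref{lm: comparison 2} gives $\sqrt3\,\eta(s)\ge\tilde\mfr(s)=\mfr^{\delta_0}(\langle\Theta\rangle(s))$, so $\{0\le\eta(s)<\varepsilon\}\subseteq\{0\le\tilde\mfr(s)<\sqrt3\,\varepsilon\}$ and $\tfrac1{2\varepsilon}\int_0^t\mathbf 1_{\{\eta(s)<\varepsilon\}}\,d\langle\eta\rangle(s)\le\tfrac{c_{1}}{2\varepsilon}\,\mathrm{Leb}\{s\le t:\tilde\mfr(s)<\sqrt3\,\varepsilon\}$. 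Next I would pass through the time change: $\langle\Theta\rangle(\cdot)$ is continuous and strictly increasing (its density $Q(Y(\cdot))$ is positive off the Lebesgue--null set $\{Y\in\mathcal Z\}$), bounded above because $S$ is bounded, and bounded below by $c:=3\min_{{\bf p}\in\Sigma_n}\min_{1\le\ell\le n}\tilde\lambda_{\ell,{\bf p}}>0$ by (\ref{ineq: Q}); hence its inverse is absolutely continuous with derivative at most $1/c$, and the change of variables $u=\langle\Theta\rangle(s)$ bounds the last expression by $\tfrac{c_{1}}{2c\varepsilon}\,\mathrm{Leb}\{u\le\langle\Theta\rangle(t):\mfr^{\delta_0}(u)<\sqrt3\,\varepsilon\}$. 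Letting $\varepsilon\downarrow0$, with $\langle\Theta\rangle(t)<\infty$ and dominated convergence, this tends to $\tfrac{c_{1}}{2c}\,\mathrm{Leb}\{u\le\langle\Theta\rangle(t):\mfr^{\delta_0}(u)=0\}=0$ a.s., since $\delta_0>1$ by (\ref{eq: def delta 0}) and a Bessel process of dimension greater than $1$ spends zero Lebesgue time at the origin, as recorded in (\ref{eq: no lt for BES(3)}) and the remarks following it. Thus $\Lambda_\eta(t)\le0$; as $\Lambda_\eta\ge0$, this forces $\Lambda_\eta(\cdot)\equiv0$.

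The quadratic--variation bound is routine once one knows ties among $Y_i,Y_j,Y_k$ occur on a Lebesgue--null set of times. The main obstacle is the local--time estimate, and specifically the passage through the random time change $\langle\Theta\rangle$: one must check that $\langle\Theta\rangle$ is bi--Lipschitz enough (derivative bounded above from $\lVert S\rVert$, and below by $c>0$ from (\ref{ineq: Q})) that Lebesgue--null sets of $\mfr^{\delta_0}$--times pull back to Lebesgue--null sets of $\eta$--times, and one must justify interchanging $\varepsilon\downarrow0$ with the (now genuine Lebesgue) integral.
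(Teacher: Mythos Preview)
Your proposal is correct and follows essentially the same route as the paper: the quadratic-variation bound comes from the boundedness of $S(\cdot)S(\cdot)^{\prime}$ (you make the constant $c_{1}=2\max_{{\bf p}}\lambda_{\max}(\mfs_{\bf p}\mfs_{\bf p}^{\prime})$ explicit, whereas the paper merely invokes boundedness), and the local-time argument proceeds exactly as the paper does---occupation-density representation, comparison with $\tilde\mfr$ via Lemma~\ref{lm: comparison 2}, time change through $\langle\Theta\rangle$ using the lower bound (\ref{ineq: Q}). The only cosmetic difference is that at the last step the paper bounds by a multiple of $\Lambda_{\mfr^{\delta_{0}}}(\langle\Theta\rangle(t))\equiv 0$, while you pass directly to $\mathrm{Leb}\{u\le\langle\Theta\rangle(t):\mfr^{\delta_{0}}(u)=0\}=0$; these are equivalent.
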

\begin{pf}
In fact, since the diffusion
coefficient matrix $ S(\cdot) $ of $ Y(\cdot) $ in (\ref{eq:
model chg meas}) is bounded and positive definite under (\ref{eq: vol
cond 1}), there exists a constant $ c_{1} $ such that $ \langle
\eta\rangle(t) \le c_{1} t $ for $ 0 \le t < \infty$ a.s.
Moreover, from (\ref{ineq: Q}) and Lemma \ref{lm: comparison 2}, there
exists a constant $ c_{2} := \min_{\mathbf{p}\in\Sigma_n, \ell=1,
\ldots, n} \tilde\lambda_{\ell, {\mathbf p} } >0 $, such that $
\langle\Theta\rangle(t) \ge c_{2} t $ holds for $ 0 \le t
<\infty$ a.s. It follows from the representation of local times
(Theorem VI. 1.7 of \cite{RY99}) and (\ref{eq: no lt for BES(3)}) with
Lemma \ref{lm: comparison 2} that
%
%
\begin{eqnarray} \label{eq: lt is zero eta}\qquad
\Lambda_{\eta}(t) &=& \lim_{\varepsilon\downarrow0}\frac
{1}{2\varepsilon} \int^{t}_{0} {\mathbf1}_{\{0 \le\eta(s) <
\varepsilon\}} \,d \langle\eta\rangle(s) \le\lim_{\varepsilon
\downarrow0} \frac{\sqrt{3}c_{1}}{2 \varepsilon} \int^{t}_{0}
{\mathbf1}_{\{ 0 \le\sqrt{3}\eta(s) < \varepsilon\}} \,d s
\nonumber\\
&\le&\lim_{\varepsilon\downarrow0} \frac{\sqrt{3}c_{1}}{2
\varepsilon} \int^{t}_{0} {\mathbf1}_{\{0 \le\tilde\mfr(s) < \varepsilon
\}} \,d s \le\lim_{\varepsilon\downarrow0} \frac{\sqrt{3}
c_{1}}{2 c_{2} \varepsilon} \int^{\langle\Theta\rangle(t)}_{0}
{\mathbf1}_{\{0 \le\mfr^{(\delta)} (u) < \varepsilon\}} \,d u
\\
&\le&\sqrt{3} c_{1} c_{2}^{-1} \Lambda_{\mfr^{(\delta)}}(\langle
\Theta\rangle(t)) \equiv0 ,\qquad 0 \le t < \infty.\nonumber
\end{eqnarray}
\upqed\end{pf}

\subsection{\texorpdfstring{Proof of Lemma \protect\ref{lm: for prop 2}}{Proof of Lemma 1.}}
\label{app: 7.2}
Define an increasing family of events $ C_{T}:=\{\Lambda_{\eta}(t) >
0 \mbox{ for some } t \in[0, T] \} $, $ T \ge0 $. By Lemma
\ref{lm: comparison 1} we obtain $ \QQ(C_{\infty}) = 0 $ and $ 0
= \QQ(C_{\ell} ) = \PP(C_{\ell}) $ for $ \ell\ge1 $. Then $
\PP(\Lambda_{\eta}(t) > 0$ for some $t \ge0) = \PP(\bigcup
_{\ell=1}^{\infty} C_{\ell}) = \lim_{\ell=\infty} \PP(C_{\ell}) = 0
$. Thus the local time $ \Lambda_{\eta}(t) $ of the rank gap process
$ \eta(\cdot) $ for $ (Y_{i}(\cdot) , Y_{j}(\cdot) ,
Y_{k}(\cdot) ) $ is zero for $ 0 \le t < \infty$ a.s. under
$ \PP$.

Since the choice of $ i, j, k $ is arbitrary, there is no local
time generated by the rank gap process of any three coordinates. The
rank gap process of more than three coordinates [e.g., $\max_{\ell=h,
i,j,k} Y_{\ell}(\cdot) - \min_{m=h,i,j,k} Y_{m}(\cdot) $] dominates
that of any three sub-coordinates. Therefore, by a similar argument as
(\ref{eq: lt is zero eta}) and its consequence, any local time of rank
gap process of more than three coordinates is zero for $ 0 \le t <
\infty$ a.s. \textit{under} $ \PP$.

To establish (\ref{eq: rank proc 3}) from this and (\ref{eq: rank proc
2}), and thus complete the proof of Lemma \ref{lm: for prop 2},
consider any integers (ranks) $ 1 \le a \le\ell<m \le b \le n $
with $ b - a \ge2 $, and observe that we have almost surely
\begin{eqnarray*}
0 &\equiv& \Lambda^{a, b} (t) = \int_0^t {\mathbf1}_{ \{ Z_a (s) = Z_b (s)
\} } \,d \bigl( Z_a (s) - Z_b (s) \bigr)
\\
&=& \int_0^t {\mathbf1}_{ \{ Z_a (s) = Z_b (s) \} } \,d \bigl( Z_a (s) -
Z_\ell(s) \bigr) + \int_0^t {\mathbf1}_{ \{ Z_a (s) = Z_b (s) \} } \,d
\bigl( Z_\ell(s) - Z_m (s) \bigr)
\\
&&{} + \int_0^t {\mathbf1}_{ \{ Z_a (s) = Z_b (s) \} } \,d \bigl( Z_m (s) -
Z_b (s) \bigr)
\\
&=& \int_0^t {\mathbf1}_{ \{ Z_a (s) = Z_b (s) \} } \,d \bigl( \Lambda^{a,
\ell} (s) + \Lambda^{ \ell, m} (s) + \Lambda^{m, b} (s) \bigr) \\
&\ge& \int_0^t {\mathbf1}_{ \{ Z_a (s) = Z_b (s) \} } \,d \Lambda^{ \ell, m}
(s) \ge0 .
\end{eqnarray*}
The a.s. equality $ \int_0^t {\mathbf1}_{ \{ Z_a (s) = Z_b (s) \} } \,d
\Lambda^{ \ell, m} (s) = 0 $ follows readily from this, as does
\[
\int_0^t {\mathbf1}_{ \{ N_k (t) \ge3 \} } \Biggl( \sum_{\ell=k
+1}^{n} d \Lambda^{ k , \ell} (s) - \sum_{\ell= 1}^{k-1} d \Lambda^{
\ell, k } (s) \Biggr) = 0
\]
and thus (\ref{eq: rank proc 3}) as well.

\subsection{\texorpdfstring{Proof of Lemma \protect\ref{lm: BAR char}}{Proof of Lemma 2.}}
\label{sec: prf BAR char}
For each $k = 1, \ldots, n-1 $ the local time $ \Lambda
^{k,k+1}(\cdot) $ is a continuous additive functional of $ (\Xi
(\cdot), \mfP_{\cdot}) $ with support in $ \mfF_{k} $, and the
expectation of $ \Lambda^{k,k+1}(t) $ with respect to the invariant
distribution $ \nu(\cdot, \cdot) $ is finite for $ t \ge0 $.

It follows from the theory of additive functionals \cite{AKR67} that
there is a finite measure $ \nu_{k}(\cdot, \cdot) $ on $ \mfF_{k}
\times\Sigma_{n} $ such that
%
%
\begin{equation} \label{eq: add fun 1}\qquad
\frac{1}{T} \E_{\nu} \biggl[ \int^{T}_{0} g(\Xi(s), \mfP_{s}) \,d
\Lambda^{k,k+1}(s) \biggr] = \frac{1}{2} \int_{\mfF_{k} \times\Sigma
_{n}} g(z, {\mathbf p}) \,d \nu_{k}(z, {\mathbf p})
\end{equation}
for every bounded measurable function $ g \dvtx \mfF_{k} \times\Sigma
_{n} \mapsto\R$. Let us denote by $ \nu_{0k}(\cdot) = \nu
_{k}(\cdot, \Sigma_{n}) $ the marginal distribution on $ \mfF_{k}
$. The absolute continuity of $ \nu_{0k}(\cdot) $ with respect to
$ (n-1)$-dimensional Lebesgue measure is argued by localization and
the properties of Reflected Brownian motion as in Theorem 7.1, Lemmata
7.7 and 7.9 of \cite{HW87Stoch}.

Now, by an application of It\^o's rule, for $ f \in C^{2}_{b}((\R
_{+})^{n-1}) $ we obtain
\begin{eqnarray*}
f(\Xi(T)) &=& f(\Xi(0)) + \int^{T}_{0} \langle\nabla f(\Xi(s)), d
\zeta^{\mathrm{mart}}(s) \rangle\\
&&{}+ \sum_{k=1}^{n-1}
\int^{T}_{0}[ \mathcal D_{k} f ] (\Xi(s)) \,d
\Lambda^{k,k+1}(s) \\
&&{} + \int^{T}_{0} [ \mathcal A f ](\Xi(s),
\mfP_{s}) \,d s ,\qquad T \ge0 ,
\end{eqnarray*}
where $ \zeta^{\mathrm{mart}}(\cdot) $ is the martingale part of
$ \zeta(\cdot) $ and $ \mathcal D_{k} $ and $ \mathcal A $
are differential operators defined in (\ref{eq: Diff Oper 1}). Taking
expectations with respect to $ \PP$ and then integrating for the
initial values with respect to the stationary distribution $ \nu(\cdot
, \cdot) $ with Fubini's theorem and (\ref{eq: add fun 1}), we obtain
\[
0 = \frac{T}{2} \sum_{k=1}^{n-1}\int_{\mfF_{k}} [ \mathcal D_{k} f
] (z) \,d \nu_{0k}(z) + T \int_{(\R_{+})^{n-1} \times\Sigma
_{n}} [ \mathcal A f ] (z, {\mathbf p}) \,d \nu(z, {\mathbf p}) .
\]
Dividing by $ T > 0 $, we obtain the basic adjoint relationship
(\ref{eq: BAR}).

\subsection{\texorpdfstring{A sanity check of Corollary \protect\ref{cor: aot 2}}{A sanity check of Corollary 4.}} \label
{sec: sanity check}
In this section we verify that the entities $ ( \theta_{k, i})_{1\le
i, k \le n} $ in (\ref{eq: theta p}) satisfy (\ref{eq: occup
ident}). Since $ \theta_{k, i} $ is homogeneous in the product $
\prod_{j=1}^{n-1} [-4 (\sigma_{j}^{2}+ \sigma_{j+1}^{2})^{-1}] $,
it suffices to show $ \sum_{k=1}^{n} \widetilde\theta_{k,i}
(g_{k} + \gamma_{i}) = 0 $ where we use the modifications $
\widetilde\theta_{k,i}:= \sum_{\{{\mathbf p}(k) = i\}} \widetilde\theta
_{\mathbf p} $,
\[
\widetilde\theta_{\mathbf p} := \Biggl( \sum_{{\mathbf q} \in\Sigma_{n}} \prod
_{j=1}^{n-1} \widetilde\lambda_{{\mathbf q} , j}^{-1} \Biggr)^{-1} \prod
_{j=1}^{n-1} \widetilde\lambda_{{\mathbf p},j}^{-1} ,\qquad
\widetilde\lambda_{{\mathbf p}, j} := \sum_{\ell=1}^{j} \bigl( g_{\ell}
+ \gamma_{{\mathbf p}(\ell)}\bigr)
\]
of $ (\theta_{k, i} , \theta_{\mathbf p}, \lambda_{{\mathbf p}, j})
$, $ 1\le i, j, k \le n $, $ {\mathbf p} \in\Sigma_{n} $, for
notational simplicity. Note that $ \widetilde\lambda_{{\mathbf p}, n} =
0 $ from (\ref{eq: drift cond 1}) for $ {\mathbf p} \in\Sigma_{n} $.

First, observe for $ \ell= 2, \ldots, n $ and $ i = 1, \ldots,
n $,
%
%
\begin{equation} \label{eq: 2 of sanity check}\quad
\sum_{\{{\mathbf p} \dvtx \mathbf{ p}(\ell-1)= i\}} \widetilde\lambda
_{{\mathbf p}, \ell-1}\widetilde\theta_{\mathbf p} + \sum_{\{{\mathbf
p} \dvtx {\mathbf p}
(\ell)=i\}} (g_{\ell} + \gamma_{i}) \widetilde\theta_{\mathbf p} \\
= \sum_{\{{\mathbf p} \dvtx {\mathbf p} (\ell) = i\}} \widetilde\lambda
_{{\mathbf p}, \ell}\widetilde\theta_{\mathbf p} .
\end{equation}
In fact, for every $ i, \ell$ define another permutation $
\widetilde{\mathbf p} $ from a (fixed) permutation $ {\mathbf p} \in\{
{\mathbf q} \in\Sigma_{n} \dvtx {\mathbf q}(\ell-1) = i\} $ by
\[
\widetilde{\mathbf p}(k) := \widetilde{\mathbf p} (k ; {\mathbf p} ) =
\cases{
{\mathbf p} (k) , &\quad $k = 1, \ldots, \ell-2, \ell
+ 1, \ldots, n$, \cr
{\mathbf p} (\ell) , &\quad $k = \ell-1$, \cr
i , &\quad $k = \ell$,}
\]
which is\vspace*{1pt} obtained by exchanging $ (\ell-1) $st and $ \ell$th
elements of $ {\mathbf p} \in\{ {\mathbf q} \in\Sigma_{n} \dvtx {\mathbf q}
(\ell-1) = i \} $, and also\vspace*{1pt} define $ M := (\sum_{{\mathbf q} \in
\Sigma_{n}} \prod_{j=1}^{n-1}\widetilde\lambda_{{\mathbf q},j}^{-1} )^{-1} $
here. Then $ \widetilde\lambda_{{\mathbf p}, j} = \widetilde\lambda
_{\widetilde{\mathbf p}, j} $ for $ j \neq\ell-1 $ and hence the
left-hand side of (\ref{eq: 2 of sanity check}) is
\begin{eqnarray*}
&&\sum_{\{{\mathbf p}\dvtx {\mathbf p} (\ell-1) = i\}} \widetilde\lambda
_{{\mathbf p},\ell-1}\cdot M \prod_{j=1}^{n-1} \widetilde\lambda
_{{\mathbf p}, j}
^{-1} + \sum_{\{{\mathbf p} \dvtx {\mathbf p} (\ell) = i\}} (g_{\ell} +
\gamma
_{i}) M \prod_{j=1}^{n-1}\widetilde\lambda_{{\mathbf p}, j} ^{-1}\\
&&\qquad= \sum_{\{\widetilde{\mathbf p} \dvtx \widetilde{\mathbf p} (\ell) = i \}
} M
\prod_{j\neq\ell-1}^{n-1} \widetilde\lambda_{\widetilde{\mathbf p}, j} ^{-1}
+ \sum_{\{\widetilde{\mathbf p} \dvtx \widetilde{\mathbf p} (\ell) = i\}}
\bigl(g_{\ell} + \gamma_{\widetilde p(\ell)}\bigr) M \prod_{j=1}^{n-1}\widetilde
\lambda_{\widetilde{\mathbf p}, j} ^{-1}
\\
&&\qquad = \sum_{\{\widetilde{\mathbf p} \dvtx \widetilde{\mathbf p} (\ell) = i \}
} \bigl[
\widetilde\lambda_{\widetilde{\mathbf p}, \ell-1} + g_{\ell} + \gamma
_{\widetilde{\mathbf p}(\ell)}\bigr] \cdot M \prod_{j=1}^{n-1} \widetilde
\lambda_{\widetilde{\mathbf p}, j} ^{-1}
= \sum_{\{{\mathbf p} \dvtx {\mathbf p} (\ell) = i\}} \widetilde\lambda
_{{\mathbf p}, \ell}\widetilde\theta_{\mathbf p} ,
\end{eqnarray*}
which is the right-hand side of (\ref{eq: 2 of sanity check}). Now applying
(\ref{eq: 2 of sanity check}) for $ \ell= 2, \ldots, n $, we obtain
\begin{eqnarray*}
\sum_{k=1}^{n} (g_{k} + \gamma_{i}) \widetilde\theta_{k, i} &=& (g_{1}
+ \gamma_{i}) \widetilde\theta_{1, i} + (g_{2} + \gamma_{i})
\widetilde\theta_{2, i} + \sum_{k=3}^{n} (g_{k}+ \gamma_{i})
\widetilde\theta_{k, i} \\
&=& \sum_{\{ {\mathbf p} \dvtx {\mathbf p} (2) = i\}} \widetilde\lambda
_{{\mathbf p}, 2}
\widetilde\theta_{{\mathbf p}} + \sum_{k=3}^{n}(g_{k} + \gamma_{i})
\widetilde\theta_{k, i}\\
&=& \cdots= \sum_{\{ {\mathbf p} \dvtx {\mathbf p} (n) = i\}} \widetilde
\lambda
_{{\mathbf p},n} \widetilde\theta_{\mathbf p}
=0
\end{eqnarray*}
for $ i = 1, \ldots, n $, because $ \widetilde\lambda_{{\mathbf p},
n} = 0 $ for $ {\mathbf p} \in\Sigma_{n} $. Therefore, (\ref{eq:
occup ident}) is satisfied.
\end{appendix}

\section*{Acknowledgments}
We are thankful to Professors Toshio Yamada, Peter Bank, Constantinos
Kardaras, Erhan Bayraktar and also the participants for their helpful
comments and discussions at the 8th Ritsumeikan--Columbia--JAFEE
International Symposium on Stochastic Processes/Mathematical Finance,
in the seminars at Quantitative Products Laboratory in Berlin, at
Boston University, at Columbia University and at the University of
Michigan. We are also thankful to the referee for valuable suggestions,
and to the Associate Editor for a remark on the proof of
Lemma \ref{lm: for prop 2}.

%

%
\printaddresses

\end{document}